\documentclass[a4paper,12pt]{article}

\usepackage[utf8x]{inputenc}
\usepackage{amsmath}
\usepackage{amsfonts}
\usepackage{latexsym}
\usepackage{amsthm}
\usepackage{amssymb,amscd}
\usepackage{xargs}
\usepackage{graphicx}
\usepackage{color}
\usepackage{enumitem}
\textwidth=145mm

\newtheorem{thm}{Theorem}[section]

\newtheorem{lem}[thm]{Lemma}

\theoremstyle{definition}
\newtheorem{cor}[thm]{Corollary}
\newtheorem{rmk}[thm]{Remark}
\newcommand{\be}{\begin{eqnarray}}
\newcommand{\ee}{\end{eqnarray}}
\newcommand{\beal}{\begin{aligned}}
\newcommand{\enal}{\end{aligned}}

\newcommand{\eps}{\varepsilon}
\newcommand{\tet}{\theta}

\newcommand{\de}{\delta}
\newcommand{\bt}{\beta}
\newcommand{\wt}{\widetilde}
\newcommand{\rr}{\rho}

\newcommand{\kk}{\kappa}
\newcommand{\sg}{\sigma}
\newcommand{\ga}{\gamma}
\newcommand{\lb}{\lambda}
\newcommand{\E}{\mathbb{E}}
\newcommand{\Ev}{\mathbb{E}v}
\newcommand{\Eu}{\mathbb{E}u}
\newcommand{\Ef}{\mathbb{E}f}
\newcommand{\Prob}{\mathbb{P}}
\newcommand{\T}{\mathbb{T}}
\newcommand{\R}{\mathbb{R}}
\newcommand{\A}{\mathbb{A}}
\newcommand{\Z}{\mathbb{Z}}
\newcommand{\ZZ}{\mathbb{Z}}
\newcommand{\Lb}{\Lambda}
\newcommand{\om}{\omega}

\newcommand{\DD}{\mathcal{D}}

\newcommand{\TT}{\mathbb{T}}

\newcommand{\OO}{\mathcal{O}}
\newcommand{\I}{\mathcal{I}}
\newcommand{\CCC}{\mathcal{C}}
\newcommand{\pa}{\partial}
\newcommand{\cR}{\mathcal{R}}

	\def\textb{\textcolor{blue}}

\title{Random Iteration of Cylinder Maps and diffusive behavior away from
resonances}
\author{
O. Castej\'on\footnote{Universitat Polit\`ecnica de Catalunya,
oriol.castejon@gmail.com}, M. Guardia\footnote{Universitat Polit\`ecnica de
Catalunya and Barcelona Graduate School of Mathematics,
marcel.guardia@upc.edu}\ \ and \  
V. Kaloshin\footnote{University of Maryland at College Park,
vadim.kaloshin@gmail.com}}

\begin{document}
\maketitle

\begin{abstract}
In this paper we propose a model of random compositions of 
cylinder maps, which in the simplified form is as follows: let 
$(\theta,r)\in \mathbb T\times \mathbb R=\mathbb A$ and 
\begin{eqnarray} \label{model}
f_{\pm 1}:
\left(\begin{array}{c}\theta\\r\end{array}\right) & 
\longmapsto &  
\left(\begin{array}{c}\theta+r+\varepsilon u_{\pm 1}(\theta,r)
\\
r+\varepsilon v_{\pm 1}(\theta,r)
\end{array}\right),
\end{eqnarray} 
where $u_\pm$ and $v_\pm$ are smooth and $v_\pm$ 
are trigonometric polynomials in $\theta$ such that 
$\int v_\pm(\theta,r)\,d\theta=0$ for each $r$. 
We study the random compositions  
$$
(\theta_n,r_n)=f_{\om_{n-1}}\circ \dots \circ f_{\om_0}(\theta_0,r_0),
$$
where $\om_k =\pm 1$ with equal probability. 
We show that under  non-de\-generacy hypotheses and away from resonances
for $n\sim \eps^{-2}$ the distributions of $r_n-r_0$ weakly 
converge to a stochastic diffusion process with explicitly 
computable drift and variance. 

{In the case 
%of random iteration of the standard maps 
%\begin{eqnarray} \nonumber
%f_{\pm 1}:
%\left(\begin{array}{c}\theta\\r\end{array}\right) & 
%\longmapsto &  
%\left(\begin{array}{c}\theta+r+\varepsilon v_{\pm 1}(\theta).
%\\
%r+\eps v_{\pm 1}(\theta)
%\end{array}\right),
%\end{eqnarray}
%where 
$u_\pm(\theta)=v_\pm(\theta)$ are trigonometric 
polynomials of zero average 
we prove {\it a vertical central limit theorem,} 
namely, for $n\sim \eps^{-2}$ the distributions 
of $r_n-r_0$ weakly converge to the normal 
distribution $\mathcal N(0,\sigma^2)$ with 
$\sigma^2=\frac14\int (v_+(\theta)-v_-(\theta))^2\,d\theta$.} 

{The random model (\ref{model}) up to higher order 
terms in $\eps$ is conjugate to a restrictions to a Normally 
Hyperbolic Invariant Lamination of the generalized Arnold 
example (see \cite{GKZ,KZZ}). Combining the result of 
this paper with \cite{CGK,GKZ,KZZ} we show formation of 
stochastic diffusive behaviour for the generalized Arnold example. }
\end{abstract}

\tableofcontents

\section{Introduction}

\subsection{Motivation: Arnold diffusion and instabilities}

By Arnold-Liouville theorem a completely integrable Hamiltonian system 
can be written in action-angle coordinates, namely, for action 
$p$ in an open set $U\subset \R^n$ and angle $\theta$ on 
an $n$-dimensional torus $\T^n$ there  is a function $H_0(p)$ 
such that equations of motion have the form 
\[
\dot \theta=\om(p),\quad \dot p=0, \qquad \text{ where }\ \om(p):=\partial_p H_0(p).
\]
The phase space is foliated by invariant $n$-dimensional tori $\{p=p_0\}$ 
with either periodic or quasi-periodic motions 
$\theta(t)=\theta_0+t\,\om (p_0)$ (mod 1). There are many different 
examples of integrable systems (see e.g. wikipedia).

It is natural to consider small Hamiltonian perturbations 
\[
H_\eps (\theta,p)=H_0(p)+\eps H_1(\theta,p),\qquad \theta\in\T^n,\ p\in U
\]
where $\eps$ is small.  The new equations of motion become  
\[
\dot \theta=\om(p)+\eps \partial_pH_1,\quad \dot p=-\eps \partial_\theta H_1. \qquad \qquad 
\]

In the sixties, Arnold \cite{Arn64} (see also \cite{Arn89, Arn94}) 
conjectured that {\it for a generic analytic perturbation there 
are orbits $(\theta,p)(t)$ for which the variation of the actions 
is of order one, i.e. $\|p(t)-p(0)\|$ that is bounded from 
below independently of $\eps$ for all $\eps$ sufficiently small.}

See \cite{BKZ,Ch,KZ12,KZ14a, Ma2, Ma3} about recent 
progress proving this conjecture for convex Hamiltonians. 

\subsection{KAM stability}

Obstructions to any form of instability, in general, and 
to Arnold diffusion, in particular,  are widely known, 
following the works of Kolmogorov, Arnold, and Moser,  nowadays called KAM 
theory. The fundamental result 
says that for a properly non-degenerate $H_0$ and for all 
sufficiently regular perturbations $\eps H_1$, the system 
defined by $H_\eps$ still has many invariant $n$-dimensional 
tori. These tori are small deformation 
of unperturbed tori and measure of the union of these invariant 
tori  tends to the full measure as $\eps$ goes to zero. 

One consequence of KAM theory is that for $n=2$ there are 
no instabilities. Indeed, generic energy surfaces $S_E=\{H_\eps =E\}$ are 
$3$-dimensional manifolds whereas KAM tori are $2$-dimensional. 
Thus, KAM tori separate surfaces $S_E$ and prevent orbits from diffusing. 

\subsection{A priori unstable systems}
In \cite{Arn64} Arnold proposed to study 
the following important  example 
\[
%\be \label{eq:Hamiltonian}
\beal 
H_\eps (p,q,I,\varphi,t)=\dfrac{I^2}{2}+H_0(p,q)
+\eps H_1(p,q,I,\varphi,t):=
\qquad \qquad  \qquad \qquad 
\\
=\underbrace{\dfrac{\ \ \ \ \ \ \ I^2\ \ \ \ \ \ \ }{2}}_{harmonic\ oscillator}+
\underbrace{\dfrac{p^2}{2}+(\cos q-1)}_{pendulum}+
\eps H_1 (p,q,I,\varphi,t),
\enal
%\ee
\]
where $q,\varphi,t\in \T$ --- angles, $p,I\in \R$ --- actions 
(see Figure \ref{fig:rotor-pendulum}), $H_1=(\cos q-1)(\cos \varphi+\cos t)$.

\begin{figure}[h]
  \begin{center}
  \includegraphics[width=10cm]{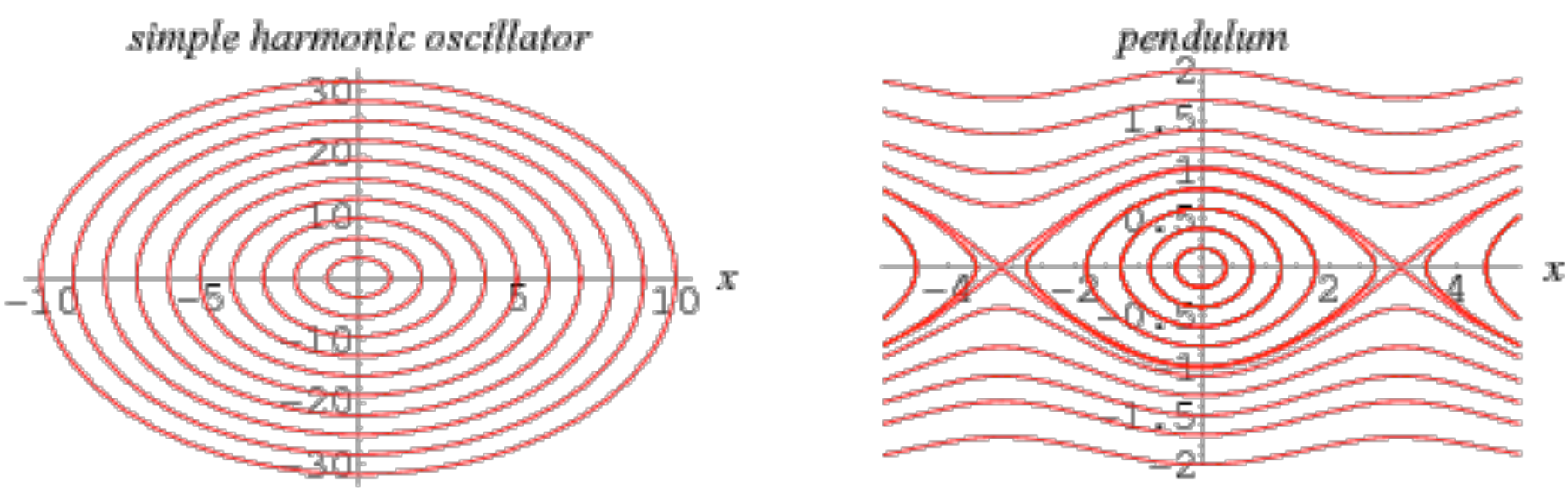}
  \end{center}
  \caption{The rotor times the pendulum }
  \label{fig:rotor-pendulum}
\end{figure}

For $\eps=0$ the system is a direct product of the harmonic 
oscillator  $\ddot \varphi=0$ and the pendulum $\ddot q=\sin q$. 
Instabilities occur when the $(p,q)$-component follows 
the separatrices $H_0(p,q)=0$ and passes near the saddle
$(p,q)=(0,0)$. Equations of motion for $H_\eps$ have 
a (normally hyperbolic) invariant cylinder $\Lb_\eps$ 
which is $\mathcal{C}^1$ close to $\Lb_0=\{p=q=0\}$.  
Systems having an invariant cylinder with a family of 
separatrix loops are called {\it a priori unstable}. 
Since they were introduced by Arnold \cite{Arn64}, they 
received a lot of attention both in mathematics and physics 
community see e.g. \cite{Be,CY,Ch,CV,DLS,GL,T2,T3}. 

Chirikov \cite{Ch2} and his followers made extensive numerical
studies for the Arnold example. He conjectured that 
{\it the $I$-displacement  behaves  randomly, 
where randomness is due to choice of initial conditions
near $H_0(p,q)=0$}.

More exactly, integration of solutions whose ``initial conditions''
randomly chosen $\eps$-close to $H_0(p,q)=0$ and integrated 
over time $\sim \eps^{-2}\ln \eps^{-1}$\,-time. This leads to 
the $I$--displacement being of order of one and having some 
distribution. This coined the name for this phenomenon:
{\it Arnold diffusion}.

Let $\eps=0.01$ and $T=\eps^{-2}\ln \eps^{-1}$. 
On Fig. \ref{fig:histograms} we present several 
histograms plotting displacement of the $I$-component 
after time $T, 2T, 4T,8T$ with 6 different groups of initial 
conditions,  and histograms of $10^6$ points. In each 
group we start with a large set of initial conditions close 
to $p=q=0,\ I=I^*$.\footnote{These histograms are part of 
the forthcoming paper of the third author with P. Roldan 
with extensive numerical analysis of dynamics of the Arnold's 
example.} One of the distinct features is that only one distribution 
(a) is close to symmetric, while in all others have a drift. 

\begin{figure}[h!]
  \begin{center}
  \includegraphics[width=11.55cm]{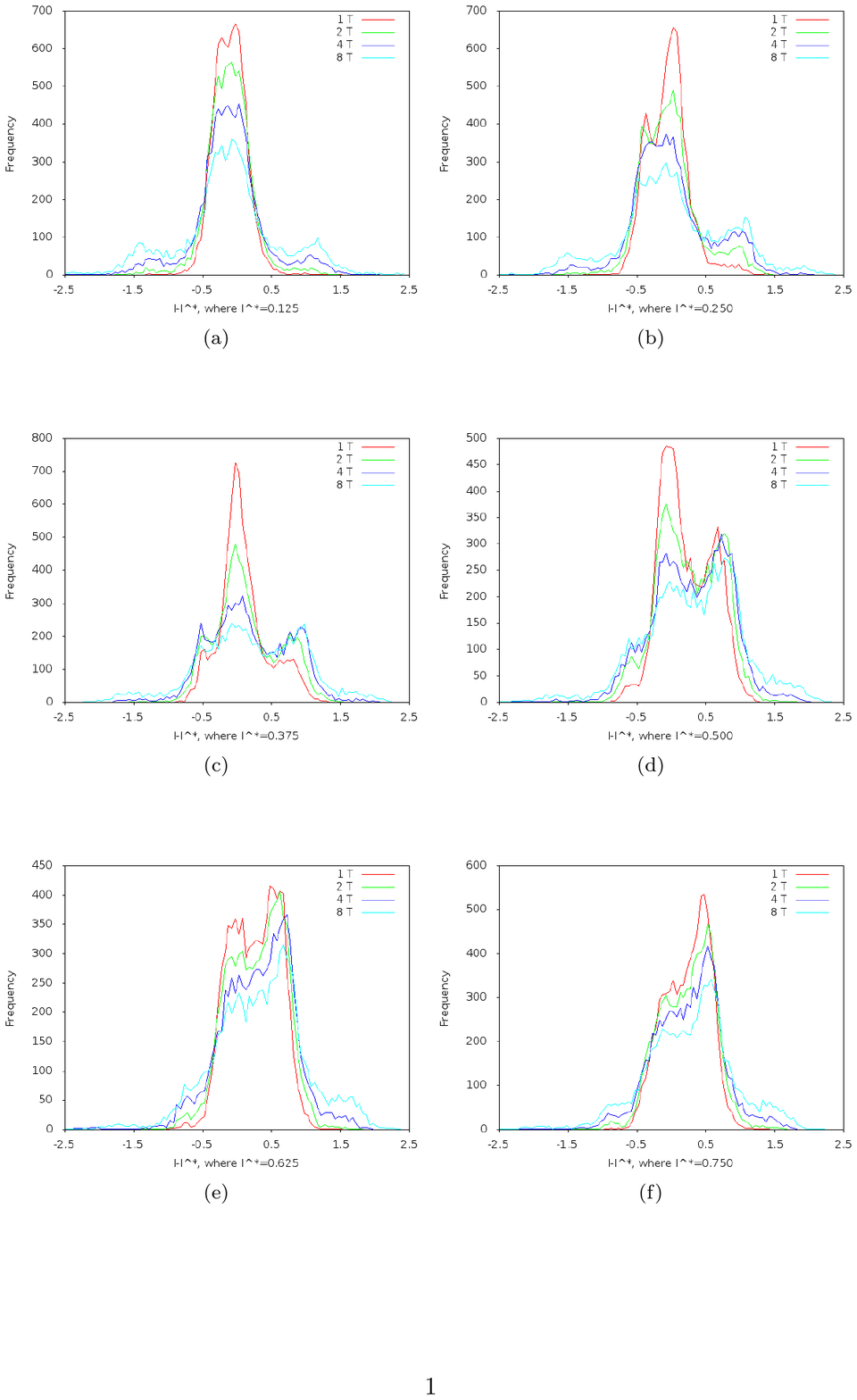}
  \end{center}
%  \caption{Histograms of the $I$-dispacement}
  \label{fig:histograms}
\end{figure}

%\newpage
A similar stochastic behaviour was observed numerically in 
many other nearly integrable problems (\cite{Ch2} pg. 370, 
\cite{DL, La}, see also \cite{SLSZ}). To give another illustrative 
example consider motion of asteroids in the asteroid belt.

\subsection{Fluctuations of eccentricity in 
Kirkwood gaps in the asteroid belt}
The asteroid belt is located between orbits of Mars and 
Jupiter and has around one million  asteroids of diameter 
of at least one kilometer. When astronomers build 
a histogram based on orbital period of asteroids there are 
well known gaps in distribution called {\it Kirkwood gaps}
(see Figure below).

\begin{figure}[h!]
  \begin{center}
  \includegraphics[width=8cm]{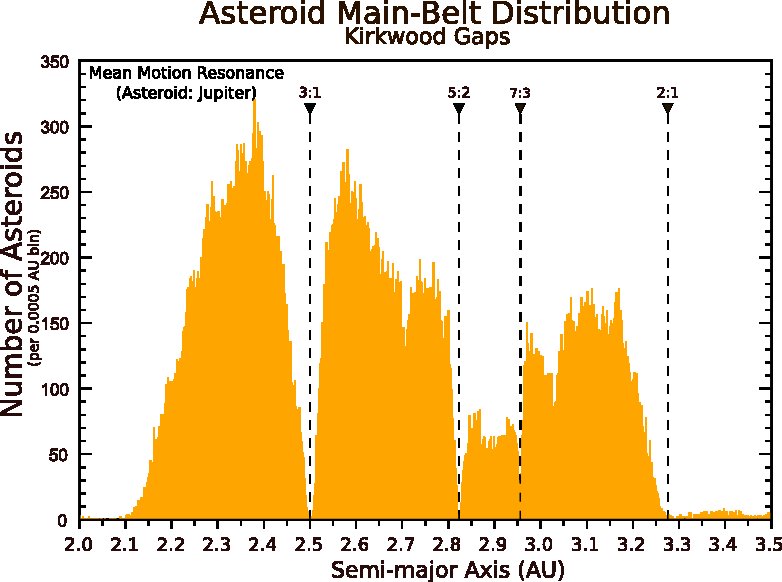}
  \end{center}
%  \caption{Kirkwood gaps }
  \label{fig:Kirkwood-gaps}
\end{figure}
These gaps occur when the ratio of periods of an asteroid and  
Jupiter is a rational with small denominator: $1/3,2/5,3/7,1/2$.  
This corresponds to so called {\it mean motion resonances
for the three body problem}. 

{Wisdom \cite{Wi} made a numerical analysis of dynamics at 
the  $1/3$ resonance and observed drastic jumps of eccentricity 
of asteroids, which are large enough so that an orbit of asteroid 
starts crossing the orbit of Mars. Once orbits do cross, they eventually undergo
 ejection, or collision, or capture. 
Later it was shown that this mechanism of jumps applies to 
the $2/5$ resonance. However, resonances $3/7$ and $1/2$ 
exhibited a different nature of instability (see e.g. \cite{Moo}). }

{In \cite{FGKR} for small (unrealistic) eccentricity of 
Jupiter, we construct
a dynamical structure along  the $1/3$ resonance which hypothetically 
leads to random 
fluctuations of eccentricity. Using this structure we prove 
existence of orbits whose eccentricity change by $\OO(1)$ for 
the restricted planar three body problem.}

Outside of these resonances one could argue that KAM 
theory provides stability see e.g. \cite{Mo}.

\subsection{Random iteration of cylinder maps}
Consider the time one map of $H_\eps$, denoted 
$$
F_\eps:(p,q,I,\varphi)\to (p',q',I',\varphi').
$$ 
It turns out that for initial conditions in certain domains $\eps$-close 
to $H_0(p,q)=0$, one can define 
a return map to an $\OO(\eps)$-neighborhood of $(p,q)=0$. 
Often such a map is called {\it a separatrix map} and in 
the $2$-dimensional case was introduced by the physicists 
Filonenko-Zaslavskii \cite{FZ}. 
In multidimensional setting such a map was defined and 
studied by Treschev \cite{PT,T1,T2,T3}. 
%A closely related map, called {\it a scattering map}, was 
%proposed by Delshams-de la Llave-Seara \cite{DLS}. 
% Scattering maps, however, describe only asymptotic orbits 
% and while separatrix maps is defined in an open set. 

It turns out that starting near $(p,q)=0$ and iterating $F_\eps$ until 
the orbit comes back $(p,q)=0$ leads to a family of maps 
of a cylinder 
$$
f_{\eps,p,q}:(I,\varphi) \to (I',\varphi'), \qquad 
(I,\varphi)\in \A=\R\times \T
$$ 
which are close to integrable. Since at $(p,q)=0$ 
the $(p,q)$-component has a saddle, there is a sensitive 
dependence on initial condition in $(p,q)$ and returns do 
have some randomness in $(p,q)$. The precise nature 
of this randomness at the moment is not clear. There are 
several coexisting behaviours, including unstable diffusive,
stable quasi-periodic, orbits can stick to KAM tori. Which
behavior is dominant is yet to be understood. May be also the
mechanism of capture into resonances \cite{Do}
is also relevant in this setting. 

In \cite{KZZ} we construct a normally hyperbolic invariant 
lamination (NHIL) for an open class of trigonometric 
perturbations 
$H_1=P(\exp(i\varphi),\,\exp(i t),\,\exp(iq)).$ 

Constructing 
unstable orbits along a NHIL is also discussed in 
\cite{dlL}. In general, NHILs give rise to a skew shift. 
For example, let $\Sigma=\{-1,1\}^\Z$ be the space of 
infinite sequences of $-1$'s and $1$'s and 
$\sigma:\Sigma \to \Sigma$ be the standard shift. 

\vskip 0.1in 

{\it Consider a skew product of cylinder maps 
$$F:\mathbb A \times \Sigma \to \mathbb A \times \Sigma,
\qquad F(r,\theta;\om)=(f_\om(r,\theta),\sigma \om),  
$$
where each $f_\om(r,\theta)$ is a nearly integrable cylinder 
maps, in the sense that it almost preserves the $r$-component
\footnote{The reason we switch 
from the $(I,\varphi)$-coordinates on  the cylinder to 
$(r,\theta)$ is because we perform a coordinate change.}.} 

\vskip 0.1in 

The goal of the present paper is to study a wide enough 
class of skew products so that they arise in Arnold's example
with a trigonometric perturbation of the above type 
(see \cite{GKZ,KZZ}).
 
Now we formalize our model and present the main result.

\subsection{Diffusion processes and infinitesimal generators}
\label{sec:diffusion-generators}

We recall some basic probabilistic notions. 
Consider a Brownian motion
 $\{B_t,\, t\ge 0\}$. 
% called {\it the Wiener process} or {\it a Browninan motion} if 
%the following four conditions hold:
%$B_0=0$, $B_t$ is almost surely continuous, $B_t$ 
%has independent increments, $B_t-B_s\sim \mathcal N(0,t-s)$ 
%for any $0\le s\le t$, where $\mathcal N(\mu,\sigma^2)$ %denotes the normal distribution with expected value 
%$\mu$ and variance $\sigma^2$. 

%The condition that it has independent increments means that 
%if $0 \leq s_1 \leq t_1 \leq s_2 \leq t_2$, then $B_{t_1}-B_{s_1}$ 
%and $B_{t_2}-B_{s_2}$ are independent random variables.
It is a properly chosen limit of the standard 
random walk. A generalisation of a Brownian motion is 
{\it a diffusion process} or {\it an Ito diffusion}. To define it 
let $(\Omega,\Sigma,P)$ be a probability space. 
Let $R:[0,+\infty) \times \Omega \to \R$. It is called an Ito diffusion 
if it satisfies {\it a stochastic differential equation} of the form
\begin{equation}\label{eq:diffusion}
\mathrm{d} R_{t} = b(R_{t}) \, \mathrm{d} t + 
\sigma (R_{t}) \, \mathrm{d} B_{t},
\end{equation}
where $B_t$ is a Brownian motion and $b : \R \to \R$ and 
$\sigma : \R \to \R$ are Lipschitz functions called 
the drift and the variance respectively. For a point 
$r \in \R$, let $\mathbb{P}_r$ denote the law of $X$ 
given initial data $R_0 = r$, and let $\E_r$ denote 
expectation with respect to $\mathbb{P}_r$.

The {\it infinitesimal generator} of $R$ is the operator $A$, 
which is defined to act  on suitable functions $f :\R\to \R$ by
\[
A f (r) = \lim_{t \downarrow 0} \dfrac{\E_{r} [f(R_{t})] - f(r)}{t}.
\]
The set of all functions $f$ for which this limit exists at 
a point $r$ is denoted $D_A(r)$, while $D_A$ denotes 
the set of all $f$'s for which 
the limit exists for all $r\in \R$. One can show that any 
compactly-supported $\mathcal{C}^2$  function $f$ 
lies in $D_A$ and that
\be \label{eq:diffusion-generator}
Af(r)=b(r) \dfrac{\partial f}{\partial r}+ \dfrac 12 \sigma^2(r)
\dfrac{\partial^2 f}{\partial r \partial r}.
\ee
The distribution of a diffusion process is characterized  by 
the drift $b(r)$ and the variance $\sigma(r)$.

\section{The model and statement of the main result}
Let $\eps>0$ be a small parameter and $l\ge 7$, 
$s\geq 0$ be integers. Denote by $\mathcal O_s(\eps)$  
a $\mathcal C^s$ function whose $\mathcal C^s$ norm is 
bounded by $C\eps$ with $C$ independent of $\eps$. 
Similar definition applies for a power of $\eps$. As before 
$\Sigma$ denotes $\{-1,1\}^\Z$ and 
$\om=(\dots,\om_0,\dots)\in \Sigma$. 

Consider nearly integrable maps
\begin{eqnarray} \label{mapthetar}
f_\om:\mathbb{T}
\times \mathbb{R}
& \longrightarrow & 
\mathbb{T} \times \mathbb{R} \qquad \qquad 
\qquad \qquad \qquad \qquad
\nonumber\\
f_\om:
\left(\begin{array}{c}\theta\\r\end{array}\right) & 
\longmapsto &  
\left(\begin{array}{c}\theta+r+\eps u_{\om_0}(\theta,r)+
\mathcal O_s(\eps^{1+a},\om)
\\
r+\eps v_{\om_0}(\theta,r)+
\eps^2 w_{\om_0}(\theta,r)+\mathcal O_s(\eps^{2+a},\om)
\end{array}\right),
\end{eqnarray} 
for $\om_0\in \{-1,1\}$, where $u_{\om_0},\ v_{\om_0},$
and $w_{\om_0}$ are bounded $\mathcal{C}^l$ functions, 
$1$-periodic in $\theta$, $\mathcal O_s(\eps^{1+a},\om)$ 
and $\mathcal O_s(\eps^{2+a},\om)$ denote  remainders 
depending on $\om$ and uniformly $\mathcal C^s$ bounded 
in $\om$, and $a> 1/2$. Assume 
\[
\max |v_i(\theta,r)|\le 1,
\]
where maximum is over $i=\pm 1$ and all 
$(\theta,r)\in \A$, otherwise, renormalize $\eps$, and 
\[
 \|u_i\|_{\CCC^6},  \|v_i\|_{\CCC^6},  \|w_i\|_{\CCC^6}\leq C
\]
for some $C>0$ independent of $\eps$.

Even if the maps $f_\omega$ depend on the full sequence $\omega$, the dependence
on the elements of $\omega_k$, $k\neq 0$, is rather weak since only appear in
the small remainder. Therefore, we abuse notation and we denote these maps
as $f_1$ and $f_{-1}$. Certainly we do not have two but an infinite number of
maps. Nevertheless, they can be treated as just two maps since the remainders
are negligible.

We study the random iterations of these maps $f_1$ and $f_{-1}$, assuming that 
at each step the probability of performing either 
map is $1/2$. The importance of understanding iterations of 
several maps for problems of diffusion is well known
(see e.g. \cite{K,Mo}). 

Denote the expected potential and 
the difference of potentials by 
\[
\begin{split}
\Eu(\theta,r)&:=
\frac 12 (u_1(\theta,r)+u_{-1}(\theta,r)),\ \ \  \Ev(\theta,r):=\frac 12
(v_1(\theta,r)+v_{-1}(\theta,r)),\\
u(\theta,r)&:=\frac 12 (u_1(\theta,r)-u_{-1}(\theta,r)),\ \ \ 
v(\theta,r):=\frac 12 (v_1(\theta,r)-v_{-1}(\theta,r)).
\end{split}
\]
%$$
%\E w(\theta,r):=\frac 12 (w_1(\theta,r)+w_{-1}(\theta,r)),\ \ \ 
%w(\theta,r):=\frac 12 (w_1(\theta,r)-w_{-1}(\theta,r)).
%$$
Suppose the following assumptions hold:
\begin{itemize}
\item[{\bf [H0]}] ({\it zero average})
{For} each $r\in \R$ and $i=\pm 1$ we have  
$\int v_i(\theta,r)\,d\theta=0$.

\item[{\bf [H1]}] 
for each $r\in \R$ we have $\int_0^1\ v^2(\theta,r)d\theta=:\sigma(r) \neq0$;

\item[{\bf [H2]}] The functions $v_i(\theta,r)$ are trigonometric polynomials 
in $\theta$, i.e. for some positive integer $d$ we have 
\[
v_i(\theta,r)=\sum_{k\in \Z,\ 0<|k|\le d} 
v_{{i}}^{(k)}(r) e^{2\pi ik\theta}. 
\]

\item[{\bf [H3]}] ({\it no common zeroes}) 
For each integer $n\in \Z$ potentials $v_{1}(\theta,n)$ and 
$v_{-1}(\theta,n)$ have no common zeroes and, equivalently, 
$f_1$ and $f_{-1}$ have no fixed points.

\item[{\bf [H4]}]  ({\it no common periodic orbits}) 
Take any rational $r=p/q\in\mathbb Q$ 
with $p,q$ relatively prime, $1\le |q|\le 2d$ and any $\theta^*\in \T$  such
that for all $\theta$ either 
%$(\theta^*, p/q)$ is a $q$-periodic orbit of either $f_1$ or $f_{-1}$, we have 
\[
\sum_{k=1}^q v_{-1}\left(\theta+\frac kq,\frac{p}{q}\right) \ne 
0
\]
or 
\[
\sum_{k=1}^q\left[v_{-1}\left(\theta+\frac kq,\frac{p}{q}\right)-
v_1\left(\theta+\frac kq,\frac{p}{q}\right)\right]^2\ne 
0.
\]
This prohibits $f_1$ and $f_{-1}$ to have common periodic 
orbits of period $|q|$. 

\item[{\bf [H5]}] ({\it no degenerate periodic points}) 
Suppose for any rational $r=p/q\in\mathbb Q$ 
with $p,q$ relatively prime, $1\le |q|\le 2d$, the function:
$$\Ev_{p,q}(\theta,r)=\sum_{\substack{k\in 
\mathbb{Z}\\0<|kq|<d}}\Ev^{kq}(r)e^{2\pi ikq\theta}$$
has distinct non-degenerate zeroes, where $\Ev^{j}(r)$ 
denotes the $j$--th Fourier coefficient of $\Ev(\theta,r)$.
\end{itemize}

% \begin{rmk}\label{def:ExtraHypotheses}
% The hypotheses {\bf [H3]},{\bf [H4]} and {\bf [H5]} are not used in this paper 
% since they are only needed in the resonant zones. They are used in  \cite{CGK}, 
% where the resonant zones are analyzed.

% We assume that $f_1$ and $f_{-1}$ do not have common periodic orbits of period
% $q$ with  $1\le |q|\le
% 2d$. In other words, take any rational $r=p/q\in\mathbb Q$  with 
% $p,q$
% relatively prime, $1\le |q|\le 2d$ and any $\theta\in \T$, then we assume   
%  $$
%  \sum_{k=1}^q\left[v_{-1}(\theta+\frac kq,r)- v_1(\theta+\frac
% kq,r)\right]^2\ne 0.
%  $$
% This includes that $f_1$ and $f_{-1}$ do not have common fixed points. Namely, 
%  that the potentials $v_{1}(\theta,n)$ and 
% $v_{-1}(\theta,n)$ do not  have  common zeroes
% for each integer $n\in \Z$.
%  This prohibits $f_1$ and $f_{-1}$ to have common periodic 
%  orbits of period $|q|$. 
 
% The second extra hypothesis to deal with resonant zones is the following.
% Consider any rational $r=p/q\in\mathbb Q$ 
%  with $p,q$ relatively prime, $1\le |q|\le d$. Then, we assume that  the
% function
% \[\Ev_{p,q}(\theta,r)=\sum_{\substack{k\in
% \mathbb{Z}\\0<|kq|<d}}\Ev^{kq}(r)e^{2\pi ikq\theta}\]
%  has distinct non-degenerate zeroes, where $\Ev^{j}(r)$ 
%  denotes the $j$--th Fourier coefficient of $\Ev(\theta,r)$.
% \end{rmk}

For $\omega\in\{-1,1\}^\Z$ we 
can rewrite the maps $f_{\om}$ in the following form:
\begin{equation*}
f_{\omega}
\left(\begin{array}{c}\theta\\r\end{array}\right)\longmapsto
\left(\begin{array}{c}\theta+r+\eps \Eu(\theta,r)+ 
\eps\omega_0 u(\theta,r)+\mathcal O_s(\eps^{1+a},\om)
\\
r+\eps \Ev(\theta,r)
+\eps\omega_0 v(\theta,r)+\eps^2 w_{\om_0}(\theta,r)
+\mathcal O_s(\eps^{2+a},\om)
\end{array}\right).
\end{equation*}

Let $n$ be a positive integer and $\omega_k\in\{-1,1\}$, $k=0,\dots,n-1$, be 
independent random variables with $\mathbb{P}\{\omega_k=\pm1\}=1/2$ and 
$\Omega_n=\{\omega_0,\dots,\omega_{n-1}\}$. 
Given an initial condition $(\theta_0,r_0)$ we denote
\[
%\be \label{eq:random-seq} 
(\theta_n,r_n):=f^n_{\Omega_n}(\theta_0,r_0)=
f_{\omega_{n-1}}\circ f_{\omega_{n-2}}\circ \cdots
\circ f_{\omega_0}(\theta_0,r_0).
\]
%\ee
% 
% \begin{itemize}
% \item[{\bf [H4]}]  ({\it no common periodic orbits}) 
% Suppose for any rational $r=p/q\in\mathbb Q$ 
% with $p,q$ relatively prime, $1\le |q|\le 2d$ and any $\theta\in \T$   
% $$
% \sum_{k=1}^q\left[v_{-1}(\theta+\frac kq,r)- v_1(\theta+\frac kq,r)\right]^2\ne 0.
% $$
% This prohibits $f_1$ and $f_{-1}$ to have common periodic 
% orbits of period $|q|$. 
% 
% \item[{\bf [H5]}] ({\it no degenerate periodic points}) 
% Suppose for any rational $r=p/q\in\mathbb Q$ 
% with $p,q$ relatively prime, $1\le |q|\le d$, the function:
% $$\Ev_{p,q}(\theta,r)=\sum_{\substack{k\in \mathbb{Z}\\0<|kq|<d}}\Ev^{kq}(r)e^{2\pi ikq\theta}$$
% has distinct non-degenerate zeroes, where $\Ev^{j}(r)$ 
% denotes the $j$--th Fourier coefficient of $\Ev(\theta,r)$.
% \end{itemize}

A straightforward calculation shows that:
\begin{equation}\label{mapthetanrn}
\begin{array}{rcl}
% \theta_n&=&\displaystyle\theta_0+nr_0+\eps \sum_{k=0}^{n-1}
% \left(\Eu(\theta_k,r_k)+ \Ev(\theta_k,r_k)\right)
% \\
% &&\displaystyle+
% \eps\sum_{k=0}^{n-1} \omega_k
% \left(u(\theta_k,r_k)+v(\theta_k,r_k)\right) 
% +\mathcal O_s(n\eps^{1+a})
% \bigskip\\
\theta_n&=&\displaystyle\theta_0+nr_0+\eps\left( 
\sum_{k=0}^{n-1}
\Eu(\theta_k,r_k)+ \sum_{k=0}^{n-2}(n-k-1) \Ev(\theta_k,r_k)\right)
\\
&&\displaystyle+\eps\left(
\sum_{k=0}^{n-1} \omega_k
u(\theta_k,r_k)+\sum_{k=0}^{n-2}(n-k-1) \omega_k v(\theta_k,r_k)\right) 
+\mathcal O_s(n\eps^{1+a})
\bigskip\\
r_n&=&\displaystyle r_0+\eps\sum_{k=0}^{n-1}
\Ev(\theta_k,r_k)+\eps
\sum_{k=0}^{n-1}\omega_kv(\theta_k,r_k)
+\mathcal O_s(n\eps^{2+a})
\end{array}
\end{equation}

\begin{thm}\label{maintheorem}
Assume that, in the notations above, conditions {\bf [H0-H5]} 
hold and take $r_0\in\R$. Let $n_\eps \eps^2 \to s>0$ as $\eps\to 0$ for 
some 
$s>0$. Then as $\eps \to 0$ the distribution of $ r_{n_\eps}-r_0$ 
converges weakly to $R_s$, where $R_\bullet$ is 
a diffusion process of the form \eqref{eq:diffusion}, 
with the drift and the variance 
\be \label{eq:drift-variance}
b(R )=\int_0^1E_2(\theta,R)\,d\theta,
\qquad \sigma^2(R)=\int_0^1v^2(\theta,R)\,d\theta. 
\ee
for certain function $E_2$, defined in \eqref{eq:drift}.
\end{thm}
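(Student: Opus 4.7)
The plan is to verify the Stroock--Varadhan martingale problem for the diffusion with generator $Lf(r)=b(r)f'(r)+\tfrac12\sigma^2(r)f''(r)$ on the interpolated process $R^\eps_t:=r_{\lfloor t\eps^{-2}\rfloor}$. Weak convergence of $R^\eps$ in $D([0,s],\R)$ then implies the one-marginal statement of the theorem. Tightness of the family $\{R^\eps\}$ is standard given the uniform bounds on $v_i,w_i$, second-moment control of the martingale increments $\eps\,\om_k v(\theta_k,r_k)$, and the negligible remainder $\OO_s(n\eps^{2+a})=o(1)$ on the time horizon $n\sim \eps^{-2}$.

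For $f\in\CCC_c^2(\R)$, I would Taylor-expand $f(r_{k+1})-f(r_k)$ to second order in $\eps$ using \eqref{mapthetanrn}, take conditional expectation over $\om_k\in\{\pm 1\}$, and regroup. Since $\E[\om_k]=0$ and $\om_k^2=1$, the conditional mean equals
\[
\eps\,\Ev(\theta_k,r_k)\,f'(r_k)+\eps^2\bigl[\tfrac12(w_1+w_{-1})f'+\tfrac12(v^2+\Ev^2)f''\bigr](\theta_k,r_k)+o(\eps^2),
\]
while the remaining $\eps\,\om_k v(\theta_k,r_k)f'(r_k)$ is a pure martingale increment. Summing over $k\le n_\eps$ with $n_\eps\eps^2\to s$, the fast dynamics $\theta_{k+1}\approx\theta_k+r_k$ equidistributes on $\T$ whenever $r_k\notin\mathbb Q$, so the $v^2$ term produces, after $\theta$-averaging, exactly $\tfrac12\sigma^2(R^\eps_u)f''(R^\eps_u)$ in the limit, matching the variance in \eqref{eq:drift-variance}.

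The delicate part is the drift. By \textbf{[H0]} the leading contribution $\eps\sum_k\Ev(\theta_k,r_k)f'(r_k)$ carries $\eps^{-1}$ terms of size $\eps$ and is not a priori $O(1)$; the true drift emerges only after a second-order averaging correction. I would solve the cohomological equation
\[
\Psi(\theta+r,r)-\Psi(\theta,r)=\Ev(\theta,r),
\]
whose Fourier coefficients are $\widehat\Psi^{(k)}(r)=\Ev^{(k)}(r)/(e^{2\pi ikr}-1)$. Hypothesis \textbf{[H2]} truncates the series to $|k|\le d$, so $\Psi$ is smooth outside finitely many rationals $p/q$ with $1\le|q|\le 2d$, and \textbf{[H3]}--\textbf{[H5]} bound the occupation time of their shrinking neighbourhoods. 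Replacing $f$ by the corrected test function $\tilde f=f+\eps\,\Psi f'$ makes the problematic first-order drift telescope into boundary terms of size $\eps$; the new second-order terms produced by derivatives of $\Psi$, combined with the $\tfrac12(w_1+w_{-1})f'$ and $\tfrac12\Ev^2 f''$ pieces above and rearranged into $f'$ and $f''$ coefficients, assemble into the integrand $E_2(\theta,r)$ of \eqref{eq:drift} and yield the drift $b(r)=\int_0^1 E_2(\theta,r)\,d\theta$.

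With the generator so identified and its coefficients Lipschitz (from the smoothness of $v_i,w_i$), tightness plus Stroock--Varadhan uniqueness yields weak convergence. The hardest step will be the uniform-in-$\eps$ averaging near resonances: quantitatively bounding the measure of times at which $r_k$ lies in an $\eps^\al$-neighbourhood of some $p/q$ with $1\le|q|\le 2d$, so that $\|\Psi\|_\infty$ and $\|\pa_r\Psi\|_\infty$ are effectively small on the complement. Hypotheses \textbf{[H4]} and \textbf{[H5]} are tailored exactly to this: \textbf{[H4]} forbids $f_1$ and $f_{-1}$ from sharing a periodic orbit, so at a resonance the resonantly-averaged noise or drift cannot simultaneously vanish, and \textbf{[H5]} supplies non-degenerate zeroes of the resonant average, providing a deterministic push out of each resonant window in time $o(\eps^{-2})$.
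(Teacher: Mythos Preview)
Your corrector-on-the-test-function approach is algebraically dual to the paper's normal-form-on-coordinates approach: the cohomological equation you write for $\Psi$ is exactly equation~\eqref{eq:cohomological} for $\partial_\theta S_1$, and the second-order terms you anticipate assembling into $E_2$ are those computed in \eqref{B2hatequality}. Where the implementations diverge is the gluing. The paper does not attempt a single global averaging argument; it partitions the action line into $\eps^\gamma$-strips, proves the local martingale estimate separately in Totally Irrational strips (Lemma~\ref{lemmaexpectation}, where equidistribution is quantitative via Lemma~\ref{lemmasigma2}) and Imaginary Rational strips (Lemma~\ref{lemmaexpectation-IR}, with a weaker bound), models the transitions between strips as a symmetric random walk (Lemma~\ref{lemma:randomwalkinter}), and sums the local estimates weighted by visit counts (Lemma~\ref{lemma:ShortVisitsB}).

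The genuine gap in your sketch is the treatment of the true resonances $|q|\le 2d$. Your corrector $\Psi$ blows up like $|r-p/q|^{-1}$ there, so $\tilde f$ is not uniformly bounded, and an occupation-time argument does not suffice: near a resonance the expected map is not close to a twist but to a pendulum (see \eqref{NFnear}), so orbits can be trapped inside separatrix lobes for times comparable to $\eps^{-2}$, and there is no deterministic exit in time $o(\eps^{-2})$ as you suggest. The paper does not bound occupation time here at all; it defers the entire zone $\mathcal R^{p/q}_\beta$ to the companion paper \cite{CGK}, where the diffusion is established in a different variable---the approximate first integral $H^{p/q}(\theta,r)=r^2/2+\eps V^{p/q}(\theta,r)$---and the limit process lives on a graph rather than on $\R$ (Section~\ref{sec:ResonantRegime}). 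Hypotheses \textbf{[H3]}--\textbf{[H5]} are used there to ensure the resonant variance is nondegenerate and the graph vertices are well-behaved, not to control occupation time. Matching the $H$-diffusion on the graph back to the $r$-diffusion on the line at the boundary of each $\mathcal R^{p/q}_\beta$ is then a separate step (Lemma~\ref{lemma:expectlemmaBigStrips} and its resonant analogue). Your outline would need a substantially different mechanism in these zones.
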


\paragraph{Remarks}
\begin{itemize}
% \item If one considers $\beta$ dependent on $\eps$, as long as 
% $\beta\geq\eps^{1/11}$, the theorem is still true (see Remark 
% \ref{transition-exponent}). This is used in \cite{CGK}.
\item If the map is area preserving and exact, one can check that 
\[
 b(R)=0
\]
(see Corollary \ref{corZeroDrift}).
% and 
% \[
%  \mathcal N^{(2)}=
% \{k\in\Z: k=k_1+k_2, k_1,k_2\in \mathcal N\}.
% \]
\item In the case that $u_{\pm1}=v_{\pm 1}$ and that they are 
independent of $r$, we have two area-preserving standard 
maps. In this case the assumptions become 
\begin{itemize}
\item{\bf [H0]} $\int v_i(\tet)d\tet=0$ for $i=\pm 1$;
%\item{\bf [H1]} $v_1$ and $v_{-1}$ have no common zeroes;
\item{\bf [H1]} $v$ is not identically zero;
\item{\bf [H2]} the functions $v_i$ are trigonometric
polynomials.
% \item{\bf [H4-H5]} the same condition as above without 
% dependence on $r$.
%\item{\bf [H5]} the same condition as above without 
%dependence on $r$;
\end{itemize}
A good example is $u_1(\tet)=v_1(\tet)=\cos 2\pi\tet$ and 
$u_{-1}(\tet)=v_{-1}(\tet)=\sin 2\pi\tet$. In this case 
\[
b(r):=\int_0^1E_2(\tet,r)d\tet\equiv 0,\qquad 
\sigma^2=\int_0^1 v^2(\theta)\,d\theta=\frac{1}{4}
\]
and for $n\le \eps^{-2}$ 
the distribution  $r_n-r_0$ converges to the zero mean 
variance $\eps n^2 \sg^2$ normal distribution, 
denoted $\mathcal N(0,\eps n^2 \sg^2)$. More generally,  
we have the following ``vertical central limit theorem'':

\begin{thm}\label{submaintheorem}
Assume that in the notations above conditions {\bf [H0-H5]} 
hold. Let $n_\eps \eps^2 \to s>0$ as $\eps\to 0$ for some 
$s>0$. Then as $\eps \to 0$ the distribution of $ r_{n_\eps}-r_0$ 
converges weakly to a normal random variable 
$\mathcal{N}(0,s^2 \sigma^2).$
\end{thm}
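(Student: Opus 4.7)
The plan is to derive Theorem \ref{submaintheorem} as a direct specialization of Theorem \ref{maintheorem}. In the setting of Theorem \ref{submaintheorem} the maps $f_{\pm 1}$ are area-preserving standard maps with $u_{\pm 1}(\theta)=v_{\pm 1}(\theta)$ independent of $r$, which forces the limiting diffusion to have constant variance and zero drift, so it reduces to a (constant-coefficient) scaled Brownian motion.

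First, I would invoke Theorem \ref{maintheorem} under the stated hypotheses \textbf{[H0-H5]} to conclude that $r_{n_\eps}-r_0$ converges weakly to $R_s-r_0$, where $R_\bullet$ solves
\[
dR_t=b(R_t)\,dt+\sigma(R_t)\,dB_t
\]
with drift and variance given by \eqref{eq:drift-variance}. No additional probabilistic input is needed beyond what the main theorem already provides.

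Second, I would exploit the special structure to compute the coefficients explicitly. Since $v(\theta,r)=v(\theta)$ is independent of $r$, the variance coefficient reduces to the constant $\sigma^2=\int_0^1 v^2(\theta)\,d\theta$, independent of $R$. For the drift, since $u_{\pm 1}=v_{\pm 1}$ are independent of $r$, the maps $f_{\pm 1}$ are (modulo the $\mathcal{O}_s(\eps^{1+a},\om)$ and $\mathcal{O}_s(\eps^{2+a},\om)$ remainders) area-preserving exact twist maps of the cylinder; Corollary \ref{corZeroDrift} therefore applies and gives $b(R)\equiv 0$. This is the step where one must actually check exactness and area-preservation against the definition of $E_2$ in \eqref{eq:drift}, and it is the only place where the special hypotheses of Theorem \ref{submaintheorem} get used.

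Third, with vanishing drift and constant variance, the SDE collapses to $dR_t=\sigma\,dB_t$, whose solution starting from $r_0$ is $R_s=r_0+\sigma B_s$. Hence $R_s-r_0=\sigma B_s$ is a centered Gaussian with variance proportional to $s$, and combining with the weak convergence from the first step yields the claimed normal limit. The main technical obstacle, and really the only substantive one, is verifying the hypotheses of Corollary \ref{corZeroDrift} — that is, checking exactness of the standard-map form so the drift integral vanishes identically; everything else is mechanical specialization of Theorem \ref{maintheorem}.
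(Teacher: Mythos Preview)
Your proposal is correct and matches the paper's treatment: the paper does not give a separate proof of Theorem~\ref{submaintheorem} but presents it as an immediate specialization of Theorem~\ref{maintheorem} together with Corollary~\ref{corZeroDrift}, exactly as you outline. One small remark: you correctly note that $R_s-r_0=\sigma B_s$ has variance $s\sigma^2$, whereas the stated limit in the theorem reads $\mathcal{N}(0,s^2\sigma^2)$; this appears to be a typo in the paper (the abstract, for $s=1$, gives variance $\sigma^2$, consistent with $s\sigma^2$), so you may want to flag that discrepancy rather than silently concluding ``the claimed normal limit.''
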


\item Numerical experiments of Moeckel \cite{Moe1} show 
that no common fixed points and  periodic orbits (see  Hypotheses {\bf [H3]} 
and {\bf [H4]}) is not neccessary to deal with the resonant zones. One 
could probably  {replace it} by a weaker non-degeneracy condition, e.g. that 
the linearization of maps $f_{\pm 1}$ at the common fixed and periodic
points  are different. 

\item In \cite{Sa} Sauzin studies random iterations of the standard 
maps  
$$(\theta,r)\to (\theta+r+\lb \phi(\theta),r+\lb \phi(\theta)),
$$
where $\lb$ is chosen randomly from $\{-1,0,1\}$ and proves
the vertical central limit theorem; 
In \cite{MS,Sa2} Marco-Sauzin present examples of nearly 
integrable systems having a set of initial conditions 
exhibiting the vertical central limit theorem. 

\item In \cite{Ma} Marco derives a sufficient condition for a
skew-shift to be a step skew-shift.

\item The condition [H2] that the functions $v_i$ are
trigonometric polynomials in $\theta$ seems redundant too,
however, removing it leads to considerable technical 
difficulties (see Section \ref{sec:different-strips}). In  short, for 
perturbations  
by a trigonometric polynomial there are finitely many resonant 
zones. This finiteness considerably simplifies the analysis.

%\item If $\mathbb{P}\{\omega_k=1\}=p$ and 
%$\mathbb{P}\{\omega_k=-1\}=1-p$ for some $p\in (0,1)$, then 
%we replace $\Eu$ and $\Ev$ by:
%$$
%\Eu_p(\theta,r)=pu_1(\theta,r)+(1-p)u_{-1}(\theta,r),
%$$
%$$ 
%\Ev_p(\theta,r)=pv_1(\theta,r)+(1-p)v_{-1}(\theta,r),
%$$
%and $u$ and $v$ by:
%$$u_p(\theta,r)=p(u_1(\theta)-u_{-1}(\theta)),\ \ v_p(\theta,r)=p(v_1(\theta)-v_{-1}(\theta)).$$

\item One can replace $\Sigma=\{-1,1\}^\Z$ with 
$\Sigma_N=\{0,1,\dots,N-1\}^\Z$, consider any finite number of maps of the form 
(\ref{mapthetar}) and a transitive Markov chain with some transition probabilities. 
If conditions {\bf [H0--H5]} are satisfied for the proper averages 
$\Ev$ of $v$, then Theorem \ref{maintheorem} holds. 

%\item Theorem \ref{maintheorem} can be easily generalized in the following way.
%
%
%\begin{thm}\label{maintheorem:generalized}
%{\bf Let $\delta_1(\eps)$ and $\delta_2(\eps)$ be continuous functions such that for
%any $\tau>0$ one has:
%$$\frac{\eps^{1+\tau}}{\delta_i(\eps)} \to 0,$$
%as $\eps \to 0.$ Then Theorem \ref{maintheorem} applies to 
%the random collection of maps $\tilde f_\omega$, where $\tilde f_\omega$ 
%is the same as $f_\omega$ 
%replacing the terms $\eps \Eu(\tet)$ and $\eps \om_0 v(\tet)$ by $\delta_1(\eps)
%\Eu(\tet)$ and $\delta_2(\eps) \om_0 v(\tet)$ respectively. }
%\end{thm}
%{\bf Need to make more detailed comments, e.g. See Appendix}
%
%For instance, one can take $\delta_i(\eps)=\eps \log^n \eps$ for any n. This
%generalisation is used in \cite{KZZ}, since the maps defined from the invariant
%laminations contain logarithmic terms.

\end{itemize}

\section{Strategy of the proof}
The random map \eqref{mapthetanrn} has two significantly different regimes: 
resonant and non-resonant. In this paper we analyze 
\eqref{mapthetanrn} 
\emph{away from resonances}. The resonance setting is analyzed in 
\cite{CGK}. The main result of \cite{CGK} is presented in Section
\ref{sec:ResonantRegime}.

We proceed to define the two regimes. Let 
\begin{equation}\label{def:FourierSupp}
 \mathcal N=\{k\in\Z: (\E u^k,\E v^k)\neq 0\}.
\end{equation}
Fix $\beta>0$. 
Then, the $\beta$-non-resonant domain is defined as
\begin{equation}\label{eq:non-res-domain}
\DD_{\bt}=\left\{r\in \R: \forall q\in \mathcal N, \ 
p\in\ZZ\ 
\text{we have }\ \left|r-\frac{p}{q}\right|\ge 2\bt \right\}. 
\end{equation}
Notice that, by Hypothesis \textbf{H2},  $\DD_{\bt}$ contains the subset 
of $\R$ which excludes the   $2\bt$-neighborhoods of all rational numbers 
$p/q$ with $0<|q|\le 2d$. Analogously, we can define the resonant domains
associated to a  rational $p/q$ with $q\in \mathcal N$ as 
\begin{equation}\label{eq:res-domain}
\cR^{p/q}_{\bt}=\left\{r\in \R: \ \left|r-\frac{p}{q}\right|\le 2\bt \right\}. 
\end{equation}

% 
% Let $(\theta_0,r_0) \in \DD_{\bt}\times\TT$ and denote by $\Pi_r$ the 
% projection onto the $r$ coordinate, i. e. $\Pi_r(\theta_0,r_0)=r_0$. We call 
% $\tilde f_\omega$ to the following modification of the map $f_\omega$ in
% \eqref{mapthetar}, 
% \[
% \tilde f_\omega(\theta_0,r_0)=\left\{\begin{split}
% f_\omega(\theta_0,r_0)&\quad\text{ if }\Pi_r f_\omega(\theta_0,r_0)\in 
% \DD_{\bt}\\
% (\theta_0,r_0)&\quad\text{ if }\Pi_r f_\omega(\theta_0,r_0)\not\in 
% \DD_{\bt}
% \end{split}
% \right.
% \]
% Define then,
% \[
%  (\tilde \theta_n,\tilde r_n)=\tilde f_{\omega_{n-1}}\circ\ldots \circ 
% \tilde  f_{\omega_{0}} (\theta_0,r_0)
% \]

\subsection{Strip decomposition}
Fix $\ga\in (0,1)$. We divide the non-resonant zone of the  cylinder, namely 
$\TT\times \DD_\bt$ (see \eqref{eq:non-res-domain}), in strips 
$\mathbb{T}\times I^j_\ga$, where $I^j_\ga\subset\DD_\bt,\ j\in \Z$, 
are intervals of length $\eps^\ga$. Then we study how the random 
variable $r_n-r_0$ behaves in each strip. More precisely, decompose 
the process $r_n(\om), n\in\Z_+$  into infinitely many time intervals 
defined by stopping times 
\be \label{eq:stopping-time}
0<n_1<n_2<\dots,
\ee 
where 
\begin{itemize} 
\item $r_{n_i}(\om)$ is $\eps$-close to 
the boundary between $I^j_\ga$ and $I^{j+1}_\ga$ for 
some $j\in \Z$ 
\item $r_{n_{i+1}}(\om)$ is $\eps$-close to the other 
boundary of either $I^j_\ga$ or of $I^{j+1}_\ga$ and 
$n_{i+1}>n_i$ is the smallest integer with this property. 
\end{itemize} 
Since $\eps\ll \eps^\ga$, being $\eps$-close to the boundary
of $I^j_\ga$ with a negligible error means jump from $I^j_\ga$ 
to the neighbour interval $I^{j\pm 1}_\ga$. In what follows for brevity 
we drop dependence of $r_n(\om)$'s on $\om$.  For reasons which will be clear
in 
Sections \ref{sec:TI-case} and 
\ref{sec:IR-case}, we consider $\ga\in (4/5,4/5+1/40)$.

In \cite{CGK}, we proceed analogously by partitioning the resonant zones.
Nevertheless, the partition is significantly different.
\subsection{Strips with 
different quantitative behaviour}\label{sec:different-strips}

Fix  
\[
%\begin{equation}\label{def:betaAndb}
\nu = \frac{1}{4}\quad\text{ and }\quad b>0\quad \text{ such that }\quad
\rho:=\nu-2b>0
\]
%\end{equation}
% and 
% $K_i:=K_i(u_1,v_1,u_2,v_2),$ $i=1,2,$ 
% depending on functions $u_j,v_j,\ j=1,2$, such that 
% $K_1<K_2$ and  all are independent of $\eps$. 
Consider 
the $\eps^\ga$-grid in the non-resonant zone $\mathcal D_\beta$ (see
\eqref{eq:non-res-domain}). Denote by $I_\ga$ a segment 
whose end points are in the grid. Since in the present paper we only deal with 
the non-resonant zone, we only need to   distinguish among 
the two following types of strips $I_\ga$ (other types for the resonant zones
are defined in \cite{CGK}).
\begin{itemize}
\item\textbf{The Totally Irrational case:} 
A strip $I_\ga$ is called {\it totally irrational} if
$r \in I_\ga$ and $|r-p/q|<\eps^\nu$, with 
$\gcd(p,q)=1$, then $|q|>\eps^{-b}$. 

In this case, we show that there is a good ``ergodization'' 
and 
\[
\sum_{k=0}^{n-1}\omega_kv\left(\theta_0+k\frac{p}{q}\right)\approx 
\sum_{k=0}^{n-1}\omega_kv\left(\theta_0+kr_0^*\right).
\]
for any 
$r_0^*\in I_\ga\cap(\mathbb{R} \setminus\mathbb{Q})$. These strips cover most
of 
the cylinder and give the dominant contribution to 
the behaviour of $r_n-r_0$. Eventually it will lead to the desired 
weak convergence to a diffusion process (Theorem \ref{maintheorem}).

\item \textbf{The Imaginary Rational (IR) case:} 
A strip $I_\ga$ is called {\it imaginary rational} if 
there exists a rational $p/q$ in an $\eps^\nu$ neighborhood of $I_\ga$ 
%with $\gcd(p,q)=1$ 
with $2d<|q|<\eps^{-b}$.

We call these strips Imaginary Rational, 
since the leading term of the angular dynamics
is a rational rotation, however, the associated averaged system
vanishes due to the fact that $u_i$ and $v_i$ only have $k$-harmonics with 
$|k|\leq d$.

In Appendix \ref{sec:measure-IR-RR},  we show that the imaginary rational 
strips occupy an $\mathcal O(\eps^\rho)$-fraction of the cylinder. 
We can show that orbits spend a small fraction of the total time 
in these strips and global behaviour is determined 
by behaviours in the complement.
\end{itemize}

% In the paper \cite{CGK}, we classify the strips of the resonant zones as
% \begin{itemize}
% \item \textbf{The Real Rational (RR) case:} 
% A strip $I_\beta$ is called {\it real rational} if there is 
% a rational $p/q\in I_\beta$, with $\gcd(p,q)=1$ and 
% $|q|\le 2d$. In each compact part of the cylinder 
% there are only finitely many strips of this kind. However, 
% this case is the most complicated and requires 
% an involved analysis.
% 
% \item \textbf{The Inner Transition Zones:} 
% A zone is an inner transition zone if there is a rational $p/q$ 
% such that $0<|q|\le 2d$ and all $r$ in the strip satisfies 
% $\eps^{1/2}\leq |r-p/q|\leq \eps^{1/4}$.
% 
% Analysis here is properly adapted due to ``influence'' of RR strips. 
% \item \textbf{The Outer Transition Zones:} 
% A pair of strips of the form $\eps^{1/4}\leq|r-p/q|\leq \eps^{1/11}$
% is called {an outer} transition zone, if there is a rational $p/q$ 
% such that $0<|q|\le 2d$. 
% 
% 
% Analysis in these zones requires adjusted coordinates
% coming from the corresponding RR strip.
% \end{itemize}

% 
% \begin{rmk}\label{H3-assumption}
% Notice that  finiteness of Real Rational strips follows 
% from assumption [H3]. If the expected potential is not 
% a trigonometric polynomial in $\theta$ this is not true.   
% \end{rmk}

\subsection{The Normal Forms}
The first step is to find a normal form, so that 
the deterministic part of map \eqref{mapthetanrn} 
is as simple as possible. It is given  in  
Theorem \ref{thm:normal-form}.
In short, we shall see that the 
 deterministic system in both the TI case and 
the IR case are 
a small perturbation of the  twist map
\begin{equation*}
\left(\begin{array}{c}\theta\\r\end{array}\right)
\longmapsto
\left(\begin{array}{c}\theta+r\\r\end{array}\right).
\end{equation*}
On the contrary, in the resonant zones studied in \cite{CGK}, the deterministic 
system will be close to a pendulum-like system
\begin{equation*}
\left(\begin{array}{c}\theta\\r\end{array}\right)
\longmapsto
\left(\begin{array}{c}\theta+r\\r+\eps E(\theta,r)\end{array}\right),
\end{equation*}
for an ``averaged'' potential $E(\theta,r)$ (see Theorem 
\ref{thm:normal-form}, \eqref{NFnear}).
We note that this system has the following approximate first integral
\[
%\be \label{eq:perfect-first-int-prelim}
H(\theta,r)=\frac{r^2}{2}-\eps\int_0^\theta E(s,r)ds,
%\ee 
\]
so that indeed it is close to a pendulum-like system. 
This will lead to different qualitative behaviours when considering 
the random system.
% Inside the Real Rational strips as well as 
% the transition zones we use $H$ as one of the coordinates.  

\subsection{Analysis of the Martingale problem in each kind of 
strip}\label{sec:Martingale}
The next step is to study the behaviour of the random system 
respectively in Totally Irrational and  Imaginary Rational strips
% , as 
% well as in the Transition Zones 
(see Sections \ref{sec:TI-case} and \ref{sec:IR-case}). 
More precisely, we use a discrete version of the scheme 
by Freidlin and Wentzell \cite{FW}, giving a sufficient condition 
to have weak convergence to a diffusion process  as $\eps\to0$ 
in terms of the associated Martingale problem. Namely, $R_s$ satisfies a 
diffusion process with drift $b(r)$ and variance $\sigma(r)$ provided that
for any $s>0$,  any time 
$n\le s\eps^{-2}$ and any $(\theta_0,r_0)$ we have that as $\eps\to 0$,
\be\label{eq:suff-condition}
\E\left(f(r_{n})- \eps^2
\sum_{k=0}^{n-1}
\left(b(r_k)f'(r_k)+\frac{\sigma^2(r_k)}{2}f''(r_k)\right)
\right)-f(r_0)\to 0.
\ee
This implies the  main result --- 
Theorem \ref{maintheorem}. 

The proof of \eqref{eq:suff-condition}  is done in two steps. 
First, we describe the local behaviour in each strip and 
then we combine the information. We define Markov times $0=n_0<n_1<n_2 <\dots 
<n_{m-1}<n_m=n\leq s\eps^{-2}$
for some random $m=m(\om)$ such that each $n_k$ is the stopping 
time as in (\ref{eq:stopping-time}) and $n_m$ is the final time. Almost surely
$m(\om)$ is 
finite.  We decompose the above sum 
\[
\E\left(\sum_{k=0}^{m-1} \left[f(r_{n_{k+1}})-
f(r_{n_{k}})-\eps^2
\sum_{s=n_k}^{n_{k+1}}\left(b(r_s)f'(r_s)+\frac{\sigma^2(r_s)}{2}
f''(r_s)\right)\right]\right),
\]
analyze each  summand in the corresponding strip and then prove that the 
whole sum  converges to 0 as $\eps\to 0$.

\subsubsection{A TI Strip}
\label{sec:TI-prelim}
Let the drift and the variance be as \eqref{eq:drift-variance}.
%\begin{equation}\label{def:DriftVar}
%b(r)=\int_0^1E_2(\theta,r)\,d\theta
%\quad \text{ and }\quad \sigma^2(r)=\int_0^1v^2(\theta,r)\,d\theta,
%\end{equation}
%where $E_2$ is a function, given by (\ref{eq:drift}) in Theorem 
%\ref{thm:normal-form}.
Let $r_0$ be $\eps$-close to the boundary of two totally 
irrational strips and let $n_\ga$ be stopping of hitting 
$\eps$-neighbourhoods of the adjacent boundaries or 
$n_\ga=n\leq s\eps^{-2}$ be the final time. 
In Lemma \ref{lemmaexpectation} 
we prove that for some $\zeta>0$ 
\begin{eqnarray}\label{exp-formula}
\beal 
&&\E\left(f(r_{n_\ga})- \eps^2
\sum_{k=0}^{n_\ga-1}\left(b(r_k)f'(r_k)+\frac{\sigma^2(r_k)}{2}
f''(r_k)\right)
\right)\\
&& \qquad \qquad \qquad \qquad \qquad  \qquad \qquad 
- f(r_0)=\mathcal{O}(\eps^{2\ga+\zeta}),
\enal 
\end{eqnarray}

\subsubsection{An IR Strip}
\label{sec:IR-prelim}
Consider the drift and variance given in \eqref{eq:drift-variance}.
% Let the drift and the variance be 
% \begin{equation}\label{def:DriftVar}
% b_{IR}(\theta,r)=\frac{1}{q}\sum_{k=0}^{q-1}E_2(\theta+kr,r)
% \quad \text{ and } \quad \sigma^2_{IR}(\theta,r)=\frac{1}{q} 
% \sum_{k=0}^{q-1}v^2(\theta+kr,r).
% \end{equation}
Let $r_0$ be $\eps$-close to the boundary of an imaginary 
rational strip and let $n_\ga$ be stopping of hitting 
$\eps$-neighbourhoods of the adjacent boundaries   or 
$n_\ga=n\leq s\eps^{-2}$ be the final time. 
Fix any $\de>0$ small. In Lemma \ref{lemmaexpectation-IR} we prove that
\begin{eqnarray*}
 &&\E\left(f(r_{n_\ga})- \eps^2
\sum_{k=0}^{n_\ga-1}
\left(b(r_k)f'(r_k)+
\frac{\sigma^2(r_k)}{2}f''(r_k)\right)\right)\\
&&\qquad \qquad \qquad \qquad \qquad  \qquad \qquad 
\qquad \qquad -f(r_0)=\mathcal{O}(\eps^{2\ga-\de}).
\end{eqnarray*}
\subsection{The resonant zones $\mathcal
R_\beta^{p/q}$}\label{sec:ResonantRegime}
The resonant zones $\mathcal R_\beta^{p/q}$ defined in \eqref{eq:res-domain} are studied in \cite{CGK}. 
We summarize here the key steps
% result obtained in that paper 
(for a more precise statement see Lemma
\ref{lemma:expectlemmaBigStrips} and remark afterwards below).
%\ref{lemma:expectlemmaBigStripsRes} below).
Fix $p/q$ with $|q|\leq 2 d$ and consider the associated resonant zone $\mathcal
R_\beta^{p/q}$ for some $\beta>0$ independent of $\eps$ ($\beta$ is chosen so
that the 
different resonant regions do not overlap).

In $\mathcal
R_\beta^{p/q}$ we do not analyze the stochastic behavior in $r$ but in a 
different variable. In \cite{CGK} we show, through a normal form, that, 
after a suitable change of coordinates, the 
deterministic map associated to \eqref{mapthetanrn}  has an approximate first 
integral $H$ of the form 
\[
 H^{p/q}(\theta, r)=\frac{r^2}{2}+\eps V^{p/q}(\theta, 
r)+\OO\left(\eps^2\right).
\]
In the resonant zone \eqref{eq:res-domain}, we analyze the process
 $(\theta_{qn},H_n)$ with 
\[H_n:=H^{p/q}\left(\theta_{qn},R_{qn}\right).\] 
 We prove that, 
 $H_n-H_0$ converges weakly to a diffusion process 
 $H_s$ with $s=\eps^{-2}n$. 
  Notice that the limiting process does not take place on a line. 
 In this case it takes place on a graph, similarly as in \cite{FW}. 
 More precisely, consider the level sets of the function 
 $H^{p/q}(\theta,r)$. The critical points of the potential 
 $V^{p/q}(\theta)$ give rise to critical points of the associated 
 Hamiltonian system. Moreover, if the critical point is a local minimum 
 of $V$, it corresponds to a center of the Hamiltonian system, 
 while if it is a local maximum of $V^{p/q}$, it corresponds to 
 a saddle. Now, if for every value 
 $H\in \mathbb{R}$ we identify all the points $(\theta,r)$ in the same 
 connected component of the curve $\{H^{p/q}(\theta,r)=H\}$, 
 we obtain a graph $\Gamma$ (see Figure \ref{fig:potential-graph} 
 for an example). The interior vertices of this graph represent 
 the saddle points of the underlying Hamiltonian system jointly with 
 their separatrices, while the exterior vertices represent the centers 
 of the underlying Hamiltonian system. Finally, the edges of the graph 
 represent the domains that have the separatrices as boundaries. 
 The process $H_n$ {can be viewed as a process on the graph.}
 
 \begin{figure}[h]
   \begin{center} 
   \includegraphics[width=8.75cm]{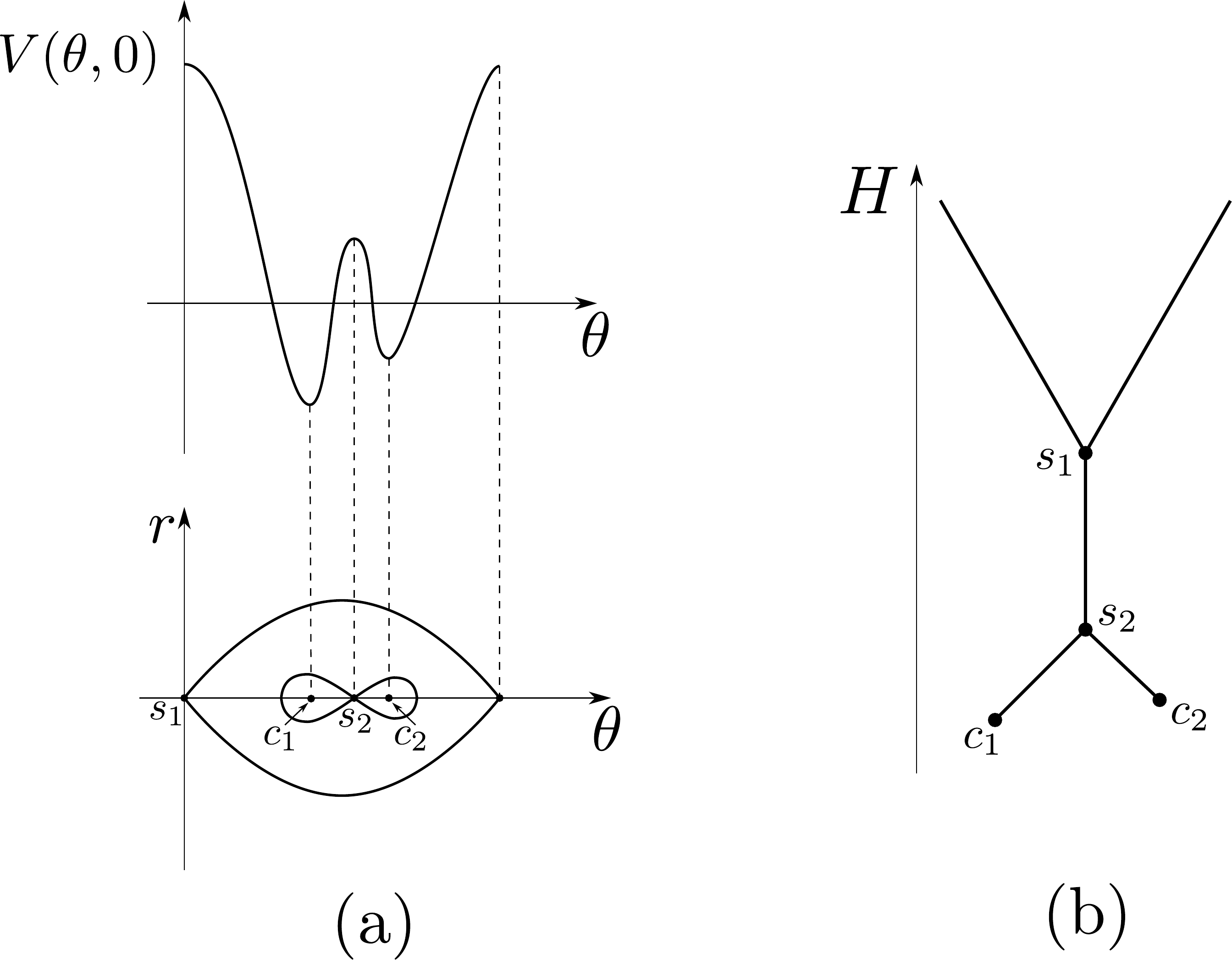}
   \end{center}
   \caption{(a) A potential and the phase portrait of its corresponding 
 Hamiltonian system. (b) The associated graph $\Gamma$.}
   \label{fig:potential-graph}
 \end{figure}

In \cite{CGK} we analyze the stochastic behavior in this graph by proving an 
analogous  sufficient condition to \eqref{eq:suff-condition} on the 
graph. Namely, we use that  $H_s$ satisfies a 
diffusion process provided that
for any $s>0$,  any time 
$n\le s\eps^{-2}$ and any $(\theta_0,H_0)$ we have that as $\eps\to 0$,
%\be\label{eq:suff-condition:res}
\[
\E\left(f(H_{n})- \eps^2
\sum_{k=0}^{n-1}
\left(b(H_k)f'(H_k)+\frac{\sigma^2(H_k)}{2}f''(H_k)\right)
\right)-f(H_0)\to 0, 
%\ee
\]
then we relate the $H$-process and the $r$-process. 
 
\subsection{Plan of the rest of the paper}
 
In Section \ref{sec:NormalForm} we state and prove 
the normal form theorem for the expected cylinder map 
$\E f$. The main difference with a typical normal form is that 
we need to have not only the leading term in $\eps$,
but also $\eps^2$-terms. The latter terms give information
about the drift $b(r)$ (see (\ref{eq:drift})). 
In Section \ref{sec:TI-case} we analyze the Totally Irrational 
case and prove approximation for the expectation from 
Section \ref{sec:TI-prelim}.  In Section \ref{sec:IR-case} we analyze the
Imaginary Rational case and prove an analogous formula from 
Section \ref{sec:IR-prelim}.  In Section \ref{sec:IR-cyl-to-line} we prove
Theorem \ref{maintheorem} using
the analysis of the TI and IR strips.
% 
% In Section \ref{sec:RR-case} we analyse the Real
% Rational case and prove an analogous formula from 
% Section \ref{sec:RR-prelim}. 

% In Section \ref{sec:TZ-case} we study the Inner Transition 
% Zones and prove an analogous formula from 
% Section \ref{sec:TZ-prelim}. 
In Appendix \ref{sec:measure-IR-RR} we estimate measure 
of the complement to the TI strips.  In Appendix \ref{sec:auxiliaries} we
present 
several auxiliary lemmas used in the proofs.

\section{The Normal Form Theorem}\label{sec:NormalForm}
In this section we  prove the Normal Form Theorem, 
which  allows us to deal with the simplest possible 
deterministic system. To this end, we  state 
a technical lemma  needed in the proof of the theorem. 
This is a simplified version (sufficient for our purposes) of Lemma 3.1 
in \cite{BKZ}. %We refer the reader to this paper for its proof.
\begin{lem}\label{lemClnorms}
Let $g(\theta,r)\in\mathcal{C}^l\left(\mathbb{T}\times B\right)$, 
where $B\subset\mathbb{R}$. Then
\begin{enumerate}
	\item If $l_0\leq l$ and $k\neq0$, $\|g_k(r)e^{2\pi i k \theta}\|_{\mathcal{C}^{l_0}}\leq |k|^{l_0-l}\|g\|_{\mathcal{C}^{l}}$.
	\item Let $g_k(r)$ be  functions that satisfy 
	$\|\partial_{ r^\alpha}g_k\|_{\mathcal{C}^0}\leq M|k|^{-\alpha-2}$ 
	for all $\alpha\leq l_0$ and some $M>0$. Then
	$$\left\|\sum_{\substack{k\in\mathbb{Z}\\0<k\leq d}}
	g_k(r)e^{2\pi i k \theta}\right\|_{\mathcal{C}^{l_0}}\leq cM,$$
	for some constant $c$ depending on $l_0$.
\end{enumerate}
\end{lem}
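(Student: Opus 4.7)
Both items are standard Fourier-analytic estimates for the coefficients of a smooth function on $\mathbb{T}\times B$. My plan is to differentiate the expressions termwise and to extract the necessary decay in $k$ by integration by parts in the $\theta$-variable.

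For Part 1, starting from
\[
g_k(r)=\int_0^1 g(\theta,r)\,e^{-2\pi i k\theta}\,d\theta,
\]
I would, when estimating $\partial_r^b g_k(r)$, integrate by parts exactly $l-b$ times in $\theta$, so as to invoke no more than $\|g\|_{\mathcal{C}^l}$ on the right. This yields $|\partial_r^b g_k(r)|\le (2\pi|k|)^{b-l}\|g\|_{\mathcal{C}^l}$. Applying $\partial_\theta^a$ to the single mode $g_k(r)e^{2\pi i k\theta}$ contributes an extra factor of $(2\pi i k)^a$, and combining:
\[
\bigl|\partial_\theta^a\partial_r^b\bigl(g_k(r)e^{2\pi i k\theta}\bigr)\bigr|\le (2\pi|k|)^{a+b-l}\|g\|_{\mathcal{C}^l}.
\]
For $a+b\le l_0$ and $l_0\le l$, the exponent $a+b-l\le 0$ and $2\pi|k|\ge 1$, so the right-hand side is controlled by $|k|^{l_0-l}\|g\|_{\mathcal{C}^l}$, giving the claim after taking the supremum over $(a,b)$ with $a+b\le l_0$.

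For Part 2, I would differentiate the sum termwise, apply the triangle inequality, and substitute the decay estimate from the hypothesis:
\[
\Bigl|\partial_\theta^a\partial_r^b\sum_{0<k\le d}g_k(r)e^{2\pi i k\theta}\Bigr|\le \sum_{0<k\le d}(2\pi|k|)^a M|k|^{-b-2}
\]
for any $a+b\le l_0$. The hypothesis provides $|k|^{-b-2}$, which contains the additional factor $|k|^{-2}$ beyond what is needed to cancel the $|k|^a$ produced by $\partial_\theta^a$; this ensures that the majorant is dominated by $M\sum_{k\ge 1}|k|^{-2}$ (times a power of $2\pi$ depending on $l_0$), hence converges uniformly in the cutoff $d$ and produces a constant $c=c(l_0)$ as required.

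The only subtle point is the bookkeeping in Part 1: one must integrate by parts exactly $l-b$ times so that the total number of derivatives of $g$ that is invoked equals $l$ (and no more), cleanly delivering the $\|g\|_{\mathcal{C}^l}$ factor; any other choice either wastes regularity of $g$ or overshoots. Part 2 is then a direct application of the decay hypothesis together with summation of the resulting convergent majorant.
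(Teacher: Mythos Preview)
The paper does not supply a proof of this lemma; it is stated as a simplified version of Lemma~3.1 in \cite{BKZ} and used without argument. Your proof of Part~1 is correct and is the standard integration-by-parts estimate.

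Your Part~2 argument has a gap. You claim that the hypothesis bound $|\partial_r^b g_k|\le M|k|^{-b-2}$ ``contains the additional factor $|k|^{-2}$ beyond what is needed to cancel the $|k|^a$ produced by $\partial_\theta^a$,'' so that the majorant is controlled by $M\sum_{k\ge 1}|k|^{-2}$ uniformly in $d$. This is only valid when $a\le b$. For a general multi-index with $a+b\le l_0$ one may have $a>b$; e.g.\ with $a=l_0$, $b=0$ the majorant becomes $(2\pi)^{l_0}M\sum_{k=1}^d k^{l_0-2}$, which is not dominated by any convergent series independent of $d$. The lemma as stated only claims a constant $c$ depending on $l_0$, with the sum truncated at $k\le d$; throughout the paper $d$ is a fixed structural parameter (the degree of the trigonometric perturbation in \textbf{[H2]}), so the crude bound $\sum_{k=1}^d k^{a-b-2}\le d\cdot d^{\max(a-b-2,0)}\le d^{l_0}$ yields $c=c(l_0,d)$, which is what is actually used in the proof of Theorem~\ref{thm:normal-form}. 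You should drop the uniformity-in-$d$ claim and state the dependence of $c$ on $d$ explicitly.
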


Let $\mathcal{R}$ be the finite set of resonances of the map \eqref{mapthetar}, 
namely, 
\[
\mathcal{R}=\{p/q\in\mathbb{Q}\,:\,\gcd(p,q)=1, |q|\leq 2d\}.
\]
% Denote by $\mathcal{O}_s(\eps)$ a function whose 
% $\mathcal{C}^s$-norm is bounded by $C\eps$ for some 
% $C$ independent of $\eps$. 

%Define\footnote{\textr{I don't think this definition of $E_2$ is correct, see formula \eqref{B2hatequality}.}} 
%\be \label{eq:drift}
%E_2(\theta,r)=\Ev(\theta,  r)\,
%\partial_\theta S_1(\theta, r)+\E w(\theta,r),\quad 
%b(r)=\int_0^1 E_2(\theta,r)d\theta,
%\ee
%where $S_1$ is a certain generating function 
%defined in (\ref{eq:Genfunction}--\ref{eq:HomEq}).

%{\bf Oriol, could you incorporate smoothness? 
%If $\E f$ is $\mathcal C^l$, then functions $\Eu$ and $E_2$
%are $\mathcal C^{l-2}$?}

\begin{thm}\label{thm:normal-form}
Consider the expected map $\Ef$ associated to the map \eqref{mapthetar}
\begin{equation}\label{def:ExpectedMap}
\E f
\begin{pmatrix}\theta\\r\end{pmatrix}\longmapsto
\begin{pmatrix}\theta+r+\eps \Eu(\theta,r)
+\mathcal O_s(\eps^{1+a})
\\
r+\eps \Ev(\theta,r)+\eps^2 \E w(\theta,r)+
\mathcal O_s(\eps^{2+a})
\end{pmatrix}.
\end{equation}
Assume that the functions $\Eu(\theta,r)$, $\Ev(\theta,r)$ and $\E w(\theta,r)$ 
are $\mathcal{C}^l$, $l\geq3$. Fix $\beta>0$
small and $0\leq s\leq l-2$. Then, there exists $K>0$ independent of $\eps$ 
and a canonical change of variables
\begin{eqnarray*}
\Phi:\mathbb{T}\times\mathbb{R}&\rightarrow&
\mathbb{T}\times\mathbb{R},\\
(\tilde \theta,\tilde r)&\mapsto&(\theta,r),
\end{eqnarray*}
such that
\begin{itemize}
\item If $|\tilde r-p/q|\geq \beta$ for all $p/q\in\mathcal{R}$, 
then
\begin{equation}\label{NFfar}
\beal 
\Phi^{-1}\circ\Ef\circ\Phi(\tilde\theta,\tilde r)=\qquad 
\qquad \qquad \qquad \qquad \qquad \qquad \\
\begin{pmatrix}\tilde \theta+\tilde r+\eps \Eu(\theta,r)-\eps \Ev(\theta,r)+\eps
E_1(\theta,r)+\mathcal O_s(\eps^{1+a})+\mathcal{O}_s(\eps^2\beta^{-(2s+4)})
\\ 
\tilde r+\eps^2E_2(\tilde\theta,\tilde
r)+\mathcal{O}_s(\eps^{2+a})+\mathcal{O}_s(\eps^3\beta^{-(3s+5)})\end{pmatrix}
,
\enal 
\end{equation}
where $E_1$ and $E_2$ are some $\mathcal{C}^{l-1}$ functions. There exists a 
constant $K$ such that for any $0\leq s\leq l-1$ one has
$$\|E_1\|_{\mathcal{C}^s}\leq K\|\Ev\|_{\mathcal{C}^{s+1}},\qquad \|E_2\|_{\mathcal{C}^s}\leq K\beta^{-(2s+3)}.$$
%where:
%\begin{equation}\label{defE1}
%E_1(\tilde\theta,\tilde r)=-\sum_{0<|k|<d}
%\frac{i(\Ev^k)'(\tilde r)}{2\pi k}e^{2\pi i k\tilde\theta}.\nonumber
%\end{equation}
Moreover, $E_2$ satisfies
\begin{equation}\label{eq:drift}
\begin{split} 
b(r)=&\int_0^1 E_2(\tilde\theta,\tilde
r)d\tilde\theta\\
=&\int_0^1 \Big(\E w(\tilde\theta,\tilde
r)-\pa_\theta\E v(\tilde\theta,\tilde
r)\E u(\tilde\theta,\tilde
r) \\
&+\partial_{\theta}S_1(\tilde\theta,\tilde r)\left(\partial_{\tilde r}
\Ev(\tilde\theta,\tilde
r)-\partial_{\theta}\Ev(\tilde\theta,\tilde
r)+\partial_{\theta}\Eu(\tilde\theta,\tilde
r)\right)\Big)d\tilde \theta.
\end{split}
\end{equation}
In particular, $b(r)$ satisfies $\|b\|_{\mathcal{C}^0}\leq K$.  

\item If $|\tilde r-p/q|\leq 2\beta$ for a given 
$p/q\in\mathcal{R}$, then
{\small \begin{equation}\label{NFnear}
\Phi^{-1}\circ\Ef\circ\Phi(\tilde\theta,\tilde r)=
\end{equation}
\begin{equation}
\begin{pmatrix}\tilde\theta+\tilde
r+\eps\left[\Eu\left(\tilde\theta,\frac{p}{q}\right)-\Ev\left(\tilde\theta,
\frac{p}{q}\right)+\Ev_ { p , q } \left(\tilde\theta ,\frac{ p}{q}\right)+
E_3(\tilde\theta)\right]+\mathcal{O}_s\left(\eps^{1+a},\eps\beta,\eps^3\beta^{
-(2s+4)}\right)\\
\tilde r+\eps\Ev_{p,q}(\tilde\theta,\tilde r)+
\eps^2 E_4(\tilde\theta,\tilde
r)+\mathcal{O}_s(\eps^{2+a},\eps^3\beta^{-(3s+5)})
\end{pmatrix},\nonumber 
\end{equation}}
where $\Ev_{p,q}$ is the $\mathcal{C}^l$ function defined as
\begin{equation}\label{defEvpq}
% \Ev_{p,q}(\tilde\theta)=
%\sum_{k\in \mathcal{R}_{p,q}}\Ev^k(p/q)e^{2\pi ik\tilde\theta},\qquad 
 \Ev_{p,q}(\tilde\theta,\tilde r)=\sum_{k\in \mathcal{R}_{p,q}}\Ev^k(\tilde
r)e^{2\pi ik\tilde\theta},
\end{equation}
and $E_3$ is the $\mathcal{C}^{l-1}$ function
\begin{equation}\label{defE3}
E_3(\tilde\theta)=-\sum_{k\not\in\mathcal{R}_\beta^{p,q}}\frac{i(\Ev^k)'(p/q)}{
2\pi
k}e^{2\pi i k\tilde\theta},
\end{equation}
where 
\begin{equation}\label{def:rpq}
\mathcal{R}_\beta^{p,q}=\{k\in\mathbb{Z}\,:\,k\neq0,\,|k|\leq2d,\,
kp/q\in\mathbb{Z}\}.
\end{equation}

Moreover, $E_4$ is a $\mathcal{C}^{l-1}$ function and there exists a constant 
$K$ such that for all $0\leq s\leq l-1$ one has
$$\|E_4\|_{\mathcal{C}^s}\leq K\beta^{-(2s+3)}.$$
\end{itemize}
Also, $\Phi$ is $\mathcal{C}^2$-close to the identity. 
More precisely, there exists a constant $M$ independent 
of $\eps$ such that
\begin{equation}\label{normPhi-Id}
\|\Phi-\textup{Id}\|_{\mathcal{C}^2}\leq M\eps.
% Oriol, this exponent needs to be checked
%^{1/6}.
\end{equation}
\end{thm}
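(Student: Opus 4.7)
The plan is to build the change of variables $\Phi$ as a symplectic near-identity transformation whose job is to kill, in one variable and to first order in $\eps$, the oscillatory part of the averaged map $\mathbb{E}f$. I would look for $\Phi=\mathrm{Id}+\eps X_1+\eps^2 X_2+\dots$, encoded by a generating function $S(\tilde\theta,\tilde r;\eps)=\eps S_1(\tilde\theta,\tilde r)+\eps^2 S_2(\tilde\theta,\tilde r)$, so that the symplectic structure is automatic and, to the order needed, $\Phi$ coincides with the time-one map of the Hamiltonian vector field of $S$. The reason to insist on canonical is that the resulting conjugate map still has the twist form with an $\eps^2$-perturbation that can be analyzed as a small Hamiltonian vector field; this is exactly what makes the drift $\eps^2 E_2$ fall out as an average against a harmonic function of $\tilde\theta$ alone.

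Next I would compute $\Phi^{-1}\circ\mathbb{E}f\circ\Phi$ to first order and impose that the coefficient of $\eps$ in the $r$-component vanish (in the non-resonant case) or reduce to $\mathbb{E}v_{p,q}$ (in the resonant case). With $\psi=\partial_{\tilde\theta}S_1$, a direct computation (using that $\mathbb{E}f$ is $(\tilde\theta,\tilde r)\mapsto(\tilde\theta+\tilde r,\tilde r)$ at order $\eps^0$) reduces this to the discrete homological equation
\[
S_1(\tilde\theta+\tilde r,\tilde r)-S_1(\tilde\theta,\tilde r)=V(\tilde\theta,\tilde r),\qquad \partial_{\tilde\theta}V=\mathbb{E}v.
\]
Expanding in Fourier $\mathbb{E}v=\sum_{0<|k|\le d}\mathbb{E}v^k(\tilde r)e^{2\pi ik\tilde\theta}$ (using \textbf{[H0]} and \textbf{[H2]}) gives
\[
S_{1,k}(\tilde r)=\frac{\mathbb{E}v^k(\tilde r)}{2\pi ik\bigl(e^{2\pi ik\tilde r}-1\bigr)},\qquad 0<|k|\le d.
\]
On $|\tilde r-p/q|\ge\beta$ for all $p/q\in\mathcal R$, one has $|e^{2\pi ik\tilde r}-1|\gtrsim\beta$, and differentiating $\alpha$ times brings in at most $\beta^{-(\alpha+2)}$; then Lemma \ref{lemClnorms} yields the announced $\|E_i\|_{\mathcal C^s}\le K\beta^{-(2s+3)}$ bounds. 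In the resonant case near $p/q$, I only solve the homological equation for the harmonics $k\notin\mathcal R_\beta^{p,q}$, leaving the resonant harmonics untouched: the unkilled part contributes $\mathbb{E}v_{p,q}$ to the $\tilde r$-equation, and the lower-order correction from freezing $\tilde r$ at $p/q$ produces the correction $E_3$ given by \eqref{defE3}.

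The third step, and the main source of bookkeeping, is the $\eps^2$-expansion of $\Phi^{-1}\circ\mathbb{E}f\circ\Phi$. Expanding $\mathbb{E}f(\tilde\theta+\eps\phi,\tilde r+\eps\psi)$ and $\Phi^{-1}$ each to second order in $\eps$ and collecting $\eps^2$-terms in the $\tilde r$-component, one obtains
\[
\tilde r_1-\tilde r=\eps\bigl[\psi(\tilde\theta,\tilde r)-\psi(\tilde\theta+\tilde r,\tilde r)+\mathbb{E}v(\tilde\theta,\tilde r)\bigr]+\eps^2 E_2(\tilde\theta,\tilde r)+\dots,
\]
where, after the first bracket is made to vanish, $E_2$ collects the cross terms $-\partial_\theta\mathbb{E}v\cdot\mathbb{E}u$, the genuinely new term $\mathbb{E}w$, a contribution of the form $\partial_\theta S_1\cdot(\partial_{\tilde r}\mathbb{E}v-\partial_\theta\mathbb{E}v+\partial_\theta\mathbb{E}u)$ coming from the Taylor shift of $\psi$ along the twist orbit, and an exact-derivative part that integrates to zero in $\tilde\theta$. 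Averaging over $\tilde\theta$ reproduces \eqref{eq:drift}, and the choice of $S_2$ can be used to eliminate the $\tilde\theta$-dependent mean-zero piece of the $\eps^2$-term (or, alternatively, one may absorb it into the $\mathcal O_s(\eps^3\beta^{-(3s+5)})$ remainder after one more averaging step). The $\mathcal C^2$-closeness \eqref{normPhi-Id} follows from $\|S_1\|_{\mathcal C^{s+2}}\lesssim \beta^{-(s+4)}\|\mathbb{E}v\|_{\mathcal C^{s+3}}$ at $s=0$.

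The main obstacle will be the second paragraph of book-keeping: propagating the $\beta^{-1}$ losses coming from the small divisors $|e^{2\pi ik\tilde r}-1|^{-1}$ through every derivative (each derivative in $\tilde r$ costs another power of $\beta^{-1}$ via Lemma \ref{lemClnorms}), while simultaneously ensuring that the symplectic error of approximating $\Phi$ by its first-order expansion stays within the announced $\mathcal O_s(\eps^2\beta^{-(2s+4)})$ and $\mathcal O_s(\eps^3\beta^{-(3s+5)})$ remainders. The resonant case is essentially the same calculation with a truncated sum, but requires the additional care of substituting $\tilde r=p/q$ inside the non-resonant piece of $\mathbb{E}v$ to extract $E_3$ and bound the error of that substitution by $\mathcal O_s(\eps\beta)$, which is why the denominators $|q|\le 2d$ enter through Hypothesis \textbf{[H2]}.
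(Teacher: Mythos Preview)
Your outline matches the paper's approach closely: a generating-function change of variables, a discrete cohomological equation solved Fourier-mode by Fourier-mode, and the $\mathcal C^s$ bounds via Lemma~\ref{lemClnorms}. Two points, however, need correction.

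First, and most importantly, the theorem asserts the existence of a \emph{single} global canonical change $\Phi$ that produces \eqref{NFfar} far from resonances and \eqref{NFnear} near them. Your proposal treats the two regimes separately (``in the resonant case \dots\ I only solve the homological equation for the harmonics $k\notin\mathcal R_\beta^{p,q}$''), which would give two different local transformations with no mechanism for gluing them into one smooth $\Phi$. The paper handles this by inserting a smooth cutoff $\mu_k(\tilde r)=\mu\bigl((1-e^{2\pi ik\tilde r})/(2\pi k\beta)\bigr)$ into the Fourier coefficients,
\[
S_1^k(\tilde r)=\frac{i\,\mathbb{E}v^k(\tilde r)\,(1-\mu_k(\tilde r))}{2\pi k\,(1-e^{2\pi ik\tilde r})},
\]
so that $S_1$ is globally defined and $\mathcal C^l$; the cutoff equals $0$ when $|\tilde r-p/q|\ge 3\beta$ (recovering the exact solution of the cohomological equation) and equals $1$ when $|\tilde r-p/q|\le\beta/2$ (leaving the resonant harmonics untouched and producing $\mathbb{E}v_{p,q}$). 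Without this device your construction does not yield a single $\Phi$, and the transition zone between the two normal forms is unaccounted for.

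Second, two smaller points. The generating function must be of mixed type $S(\theta,\tilde r)=\theta\tilde r+\eps S_1(\theta,\tilde r)$ (old angle, new action), not $S(\tilde\theta,\tilde r)$ as you wrote; otherwise it does not define a canonical transformation implicitly. And there is no need for an $S_2$: the paper keeps $E_2(\tilde\theta,\tilde r)$ with its full $\tilde\theta$-dependence and simply records its average as $b(r)$; the oscillatory part is not removed but is controlled later by ergodization (Lemma~\ref{lemmasigma2}).
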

\begin{cor}\label{corZeroDrift}
If the map \eqref{def:ExpectedMap} is area preserving and exact, 
\[b(r)\equiv
0.\]
\end{cor}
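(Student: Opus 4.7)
The plan is to show that the integrand of the drift formula \eqref{eq:drift} cancels via two independent mechanisms: a pointwise identity coming from area preservation, and an integrated identity coming from exactness on horizontal loops.

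First I would expand the Jacobian of $\E f$ using \eqref{def:ExpectedMap}. Since $\Eu,\Ev,\E w$ are independent of $\eps$, writing $\det D\E f = 1+\eps A_1(\theta,r)+\eps^2 A_2(\theta,r)+\OO(\eps^{1+a})$ and imposing $\det D\E f \equiv 1$ forces $A_1\equiv 0$, which reads
\[
\pa_r\Ev(\theta,r)-\pa_\theta\Ev(\theta,r)+\pa_\theta\Eu(\theta,r)\equiv 0.
\]
This is exactly the factor multiplying $\pa_\theta S_1$ inside the integrand of \eqref{eq:drift}, so the third summand of $b(r)$ vanishes \emph{pointwise}, independently of the specific generating function $S_1$ produced by Theorem \ref{thm:normal-form}.

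Next I would exploit exactness. Testing the identity $\oint_{\E f(\gamma)}r\,d\theta=\oint_\gamma r\,d\theta$ on the horizontal loop $\gamma(\theta)=(\theta,r)$, $\theta\in[0,1]$, and expanding with $\theta'=\theta+r+\eps\Eu+\OO(\eps^{1+a})$ and $r'=r+\eps\Ev+\eps^2\E w+\OO(\eps^{2+a})$, the coefficient of $\eps$ vanishes by hypothesis \textbf{[H0]} together with $\oint\pa_\theta\Eu\,d\theta=0$ (periodicity), while the coefficient of $\eps^2$ yields
\[
\int_0^1\E w(\theta,r)\,d\theta+\int_0^1\Ev(\theta,r)\,\pa_\theta\Eu(\theta,r)\,d\theta=0.
\]
A single integration by parts in $\theta$ (no boundary terms by $1$-periodicity) rewrites this as $\int_0^1\bigl(\E w-\pa_\theta\Ev\,\Eu\bigr)d\theta=0$, which is precisely the integral of the remaining two summands of the integrand of \eqref{eq:drift}. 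Combined with the pointwise cancellation from the first step, this gives $b(r)\equiv 0$.

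No serious conceptual obstacle arises; the work is purely in bookkeeping. The main points to verify carefully are that the $\OO_s(\eps^{1+a})$ and $\OO_s(\eps^{2+a})$ remainders in \eqref{def:ExpectedMap} do not pollute the coefficients of $\eps$ and $\eps^2$ used above (they do not, since $a>0$), and that the two structural hypotheses play genuinely distinct roles: area preservation is a pointwise identity that kills the $\pa_\theta S_1$ term, while exactness is only an integrated identity that kills $\int(\E w-\pa_\theta\Ev\,\Eu)\,d\theta$, so neither hypothesis alone would suffice.
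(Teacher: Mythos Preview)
Your argument is correct and follows essentially the same route as the paper's own proof: the paper also reads off $\pa_r\Ev-\pa_\theta\Ev+\pa_\theta\Eu=0$ from the first order of $\E f^*(dr\wedge d\theta)-dr\wedge d\theta$ (equivalently, your Jacobian computation) and reads off $\int_0^1(\E w-\pa_\theta\Ev\,\Eu)\,d\theta=0$ from the first and second orders of $\E f^*(r\,d\theta)-r\,d\theta$ (equivalently, your line integral on horizontal loops). Your added remarks on why the $\OO_s(\eps^{1+a})$ remainder does not contaminate the $\eps$- and $\eps^2$-coefficients, and on the distinct roles of area preservation versus exactness, are accurate and make the reasoning more explicit than the paper's sketch.
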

\begin{proof}[Proof of Corollary \ref{corZeroDrift}]
It is enough to recall the following two facts. First, expanding $\E
f^*(dr\wedge d\theta)-dr\wedge d\theta$ in $\eps$ and taking the first order,
one obtains that being $\E f$ area preserving   implies
$\partial_{\tilde r}
\Ev(\tilde\theta,\tilde
r)-\partial_{\theta}\Ev(\tilde\theta,\tilde
r)+\partial_{\theta}\Eu(\tilde\theta,\tilde
r)=0$. Second, expanding $\E
f^*(rd\theta)-rd\theta$ in $\eps$ and taking the first and second order, being
exact implies $\int_0^1 \E v(\tilde\theta,\tilde
r)d\tilde
r=0$ and 
\[
\int_0^1 \left(\E w(\tilde\theta,\tilde
r)-\pa_\theta\E v(\tilde\theta,\tilde
r)\E u(\tilde\theta,\tilde
r)\right)d\tilde
r=0.\
\]
\end{proof}

\begin{rmk} \label{transition-exponent}
 Notice that in the case $\beta=\eps^{1/11}$ and $s=0$
the remainder term 
$\mathcal{O}_0(\eps^2\beta^{-5})$ is dominated 
by $\mathcal{O}_0(\eps^{2+a})$ if $1/2<a<6/11$. 
\end{rmk}

\begin{proof}[Proof of Theorem \ref{thm:normal-form}]
% For each $p/q\in\mathcal{R}$ we will perform a different change. 
% Since the procedure is the same for all $p/q\in\mathcal{R}$, from 
% now on we fix $p/q\in\mathcal{R}$. The procedure for the rest is 
% analogous.

Consider the canonical change defined implicitly by 
a given generating function 
$S(\theta,\tilde r)=\theta\tilde r+\eps S_1(\theta,\tilde r)$, that is
%\begin{equation}\label{eq:change} 
\[
\begin{split}
	\tilde \theta=&\partial_{\tilde r} S(\theta, \tilde r)=\theta+\eps
\partial_{\tilde r}S_1(\theta,\tilde r)\\
	r=&\partial_\theta S(\theta, \tilde r)=\tilde r+\eps \partial_\theta
S_1(\theta,\tilde r).
\end{split}
%\end{equation}
\]
We shall start by writing explicitly the first orders of 
the $\eps$-series of $\Phi^{-1}\circ\Ef\circ\Phi$.
If $(\theta,r)=\Phi(\tilde\theta,\tilde r)$ is the change given by the 
generating function $S$, then one has
\begin{eqnarray}\label{Phi}
\beal 
 \Phi(\tilde\theta,\tilde r)=\qquad \qquad \qquad \qquad
\qquad \qquad \qquad \qquad \qquad \qquad \qquad \qquad\\
\begin{pmatrix}\tilde\theta-\eps\partial_{\tilde r}S_1(\tilde\theta,\tilde
r)+\eps^2\partial_\theta\partial_{\tilde r}S_1(\tilde\theta,\tilde
r)\partial_{\tilde r}S_1(\tilde\theta,\tilde
r)+\mathcal{O}_s(\eps^3\|\partial_\theta^2\partial_{\tilde
r}S_1(\partial_{\tilde r}S_1)^2\|_{\mathcal{C}^s})\\
 \tilde r+\eps\partial_\theta S_1(\tilde\theta,\tilde r)-\eps^2\partial_\theta^2S_1(\tilde\theta,\tilde r)\partial_{\tilde r}S_1(\tilde\theta,\tilde r)+\mathcal{O}_s(\eps^3\|\partial_\theta^3S_1(\partial_{\tilde r}S_1)^2\|_{\mathcal{C}^s})
\end{pmatrix},
\enal 
\end{eqnarray}
and its inverse is given by
\begin{eqnarray}\label{Phiinverse}
\beal
 \Phi^{-1}(\theta,r)=\qquad \qquad \qquad \qquad
\qquad \qquad \qquad \qquad \qquad \qquad \qquad \qquad
\\
\begin{pmatrix}\theta+\eps\partial_{\tilde
r}S_1(\theta,r)-\eps^2\partial^2_{\tilde r}S_1(\theta,r)\partial_\theta
S_1(\theta,r)+\mathcal{O}_s(\eps^3\|\partial^3_{\tilde r}S_1(\partial_\theta
S_1)^2\|_{\mathcal{C}^s})
\\
 r-\eps\partial_\theta S_1(\theta,r)+\eps^2\partial_\theta\partial_{\tilde r}S_1(\theta,r)\partial_\theta S_1(\theta,r)+\mathcal{O}_s(\eps^3\|\partial_\theta\partial_{\tilde r}^2S_1(\partial_\theta S_1)^2\|_{\mathcal{C}^s})
\end{pmatrix}.
\enal 
\end{eqnarray}
One can see that
\begin{equation}\label{EfPhi}
\Ef\circ\Phi(\tilde\theta,\tilde r)=\begin{pmatrix}
\tilde\theta + \tilde r+ \eps
A_1+\eps^2A_2+\eps^3A_3+\mathcal{O}_s\left(\eps^{1+a}\right)\\
\tilde r+\eps
B_1+\eps^2B_2+\eps^3B_3+\mathcal{O}_s\left(\eps^{2+a}\right)
\end{pmatrix},
\end{equation}
where
\begin{equation}\label{defA1}
\begin{split}
 A_1=&\Eu(\tilde\theta,\tilde r)-\partial_{\tilde r}S_1(\tilde\theta,\tilde
r)+\partial_\theta S_1(\tilde\theta,\tilde r)\\
 A_2=&-\partial_\theta\Eu(\tilde\theta,\tilde r)\partial_{\tilde
r}S_1(\tilde\theta,\tilde r)+\partial_r\Eu(\tilde\theta,\tilde r)\partial_\theta
S_1(\tilde\theta,\tilde r)\\
 &+\partial_\theta\partial_{\tilde r}S_1(\tilde\theta,\tilde r)\partial_{\tilde
r}S_1(\tilde\theta,\tilde r) -\partial_\theta^2S_1(\tilde\theta,\tilde
r)\partial_{\tilde r}S_1(\tilde\theta,\tilde r),\\
 A_3=&\mathcal{O}_s(\|\partial_\theta^2\partial_{\tilde r}S_1(\partial_{\tilde
r} S_1)^2\|_{\mathcal{C}^s})+\mathcal{O}_s(\|\partial_\theta^3
S_1(\partial_{\tilde r} S_1)^2\|_{\mathcal{C}^s})\\
&+\mathcal{O}_s(\|\Eu\|_{\mathcal{C}^{s+1}}\|\partial_\theta
S_1\|_{\mathcal{C}^{s+1}}\|\partial_{\tilde r}S_1\|_{\mathcal{C}^s})\\
&+\mathcal{O}_s(\|\Eu\|_{\mathcal{C}^{s+2}}(\|\partial_\theta
S_1\|_{\mathcal{C}^s}+\|\partial_{\tilde
r}S_1\|_{\mathcal{C}^s})^2),
\end{split}
\end{equation}
and
\begin{equation}\label{defB2}
\begin{split}
 B_1=&\Ev(\tilde\theta,\tilde r)+\partial_\theta S_1(\tilde\theta,\tilde r),\\
 B_2=&\E w(\tilde \theta, \tilde r)-\partial_\theta\Ev(\tilde\theta,\tilde
r)\partial_{\tilde r}S_1(\tilde\theta,\tilde
r)\\ 
&+\partial_r\Ev(\tilde\theta,\tilde r)\partial_\theta
S_1(\tilde\theta,\tilde r)-\partial^2_\theta
S_1(\tilde\theta,\tilde r)\partial_{\tilde r}S_1(\tilde\theta,\tilde
r),\\
 B_3=&\mathcal{O}_s(\|\partial_\theta^3S_1(\partial_{\tilde
r}S_1)^2\|_{\mathcal{C}^s})+\mathcal{O}_s(\|\Ev\|_{\mathcal{C}^{s+1}}
\|\partial_\theta S_1\|_{\mathcal{C}^{s+1}}\|\partial_{\tilde
r}S_1\|_{\mathcal{C}^s})\\
 &+\mathcal{O}_s(\|\Ev\|_{\mathcal{C}^{s+2}}(\|\partial_\theta
S_1\|_{\mathcal{C}^s}+\|\partial_{\tilde
r}S_1\|_{\mathcal{C}^s})^2).
\end{split}
\end{equation}
Then, using \eqref{Phiinverse},
\begin{equation}\label{PhiinverseEfPhi}
\Phi^{-1}\circ\Ef\circ\Phi(\tilde\theta,\tilde r)=\begin{pmatrix}\tilde
\theta+\tilde r+\eps\hat A_1+\eps^2\hat A_2+\mathcal
O_s\left(\eps^{1+a}\right)\\\tilde r+\eps\hat B_1+\eps^2\hat B_2+\eps^3\hat
B_3+\mathcal
O_s\left(\eps^{2+a}\right)\end{pmatrix},
\end{equation}
where
\begin{equation}\label{defA1hat}
\begin{split}
 \hat A_1=&A_1+\partial_{\tilde r}S_1(\tilde\theta+\tilde r,\tilde
r),\\
 \hat A_2=&A_2+\eps A_3+\mathcal{O}_s(\|\partial_\theta\partial_{\tilde
r}S_1A_1\|_{\mathcal{C}^s})+\mathcal{O}_s(\|\partial_{\tilde
r}^2S_1B_1\|_{\mathcal{C}^s})\\
 &+\mathcal{O}_s(\|\partial_{\tilde r}^2S_1\partial_\theta
S_1\|_{\mathcal{C}^s}),
\end{split}
\end{equation}
and
\begin{equation}\label{defB2hat}
 \begin{split}
 \hat B_1=&B_1-\partial_\theta S_1(\tilde \theta+\tilde r,\tilde
r)\\
 \hat B_2=&B_2-\partial_\theta^2S_1(\tilde\theta+\tilde r,\tilde
r)A_1-\partial_{\tilde r}\partial_\theta S_1(\tilde\theta+\tilde r,\tilde
r)B_1\\
&+\partial_\theta\partial_{\tilde r}S_1(\tilde \theta+\tilde r,\tilde
r)\partial_\theta S_1(\tilde\theta+\tilde r,\tilde r),
\\
 \hat B_3=&B_3+\mathcal{O}_s(\|\partial_\theta\partial_{\tilde
r}^2S_1(\partial_\theta S_1)^2\|_{\mathcal{C}^s})\\
&+\mathcal{O}_s(\|\partial_\theta^2 S_1 (A_2+\eps
A_3)\|_{\mathcal{C}^s}+\|\partial_\theta\partial_{\tilde r} S_1
B_2\|_{\mathcal{C}^s})\\
&+\mathcal{O}_s(\|\partial_\theta^3S_1
A_1^2\|_{\mathcal{C}^s}+\|\partial_\theta^2\partial_{\tilde r}
S_1A_1B_1\|_{\mathcal{C}^s}+\|\partial_\theta\partial_{\tilde r}^2S_1
B_1^2\|_{\mathcal{C}^s})\\
&+\mathcal{O}_s(\|\partial_\theta^2\partial_{\tilde r}S_1A_1\partial_\theta
S_1\|_{\mathcal{C}^s}+\|\partial_\theta\partial_{\tilde r}^2
S_1B_1\partial_\theta S_1\|_{\mathcal{C}^s})\\
&+\mathcal{O}_s(\|\partial_\theta\partial_{\tilde r}S_1\partial_\theta^2S_1
A_1\|_{\mathcal{C}^s}+\|(\partial_\theta\partial_{\tilde
r}S_1)^2B_1\|_{\mathcal{C}^s}).
\end{split}
 \end{equation}
 
% 
% 
% 
% \begin{eqnarray}
% \tilde\theta_1&=&\tilde \theta_0+\tilde r_0+\eps\left(-\partial_{\tilde r}S_1(\tilde\theta,\tilde r)+\partial_{\theta}S_1(\tilde\theta,\tilde r)+\Ev(\tilde\theta)+\partial_{\tilde r}S_1(\tilde\theta+\tilde r,\tilde r)\right)\nonumber\\
% &&+\mathcal{O}\left(\eps^2\|S_1\|_{\mathcal{C}^2}\|S_1\|_{\mathcal{C}^1}+\eps^2\|\Ev\|_{\mathcal{C}^2}\|S_1\|_{\mathcal{C}^1}\right)\label{eqtheta1}\\
% \tilde r_1&=&\tilde r_0+\eps\left(\partial_{\theta}S_1(\tilde\theta,\tilde r)+\Ev(\tilde\theta)-\partial_{\theta}S_1(\tilde\theta+\tilde r,\tilde r)\right)\nonumber\\
% &&+\mathcal{O}\left(\eps^2\|S_1\|_{\mathcal{C}^2}\|S_1\|_{\mathcal{C}^1}+\eps^2\|\Ev\|_{\mathcal{C}^2}\|S_1\|_{\mathcal{C}^1}\right).\label{eqr1}
% \end{eqnarray}

Now that we know the terms of order $\eps$ and $\eps^2$ of 
$\Phi^{-1}\circ\Ef\circ\Phi$, we  proceed to find a suitable $S_1(\theta,\tilde 
r)$ to make $\hat B_1$ as simple as possible. Ideally we would like 
that $\hat B_1=0$ by  solving the following equation whenever it is possible
\begin{equation}\label{eq:cohomological}
\partial_{\theta}S_1(\tilde\theta,\tilde r)+\Ev(\tilde\theta,\tilde
r)-\partial_{\theta}S_1(\tilde\theta+\tilde r,\tilde r)=0.
\end{equation}
One can  find a formal solution of this equation by solving the
corresponding equation for the Fourier coefficients. Write $S_1$
and $\Ev$ 
in their Fourier series
\begin{equation}
\label{eq:Genfunction}
S_1(\theta,\tilde r)=
\sum_{k\in\mathbb{Z}}S_1^k(\tilde r)e^{2\pi ik\theta},
\end{equation}
$$\Ev(\theta,r)=\sum_{\substack{k\in\mathbb{Z}\\0<|k|\leq d}}\Ev^k(r)e^{2\pi ik\theta}.$$
It is obvious that for $|k|>d$ and $k=0$ we can take $S_1^k(\tilde r)=0$. For 
$0<|k|\leq d$ we obtain the following homological equation for $S_1^k(\tilde r)$
\begin{equation} \label{eq:HomEq}
2\pi ikS_1^k(\tilde r)\left(1-e^{2\pi ik\tilde r}\right)+\Ev^k(r)=0.
\end{equation}
This equation cannot be solved if $e^{2\pi ik\tilde r}=1$, i.e. if $k\tilde 
r\in\mathbb{Z}$. We note that there exists a constant $L$, independent of 
$\eps$, $L<d^{-1}$, such that if $\tilde r\neq p/q$ satisfies
$$0<|\tilde r-p/q|\leq L$$
then $k\tilde r\not \in\mathbb{Z}$ for all $0<k\leq d$. Restricting 
ourselves to the domain $|\tilde r-p/q|\leq L$, we have that if 
$kp/q\not\in\mathbb{Z}$ equation \eqref{eq:HomEq} always has a solution, and if 
$kp/q\in\mathbb{Z}$ this equation has a solution except at $\tilde r=p/q$. 
Moreover, in the case that the solution exists, it is equal to:
$$S_1^k(\tilde r)=\frac{i\Ev^k(r)}{2\pi k\left(1-e^{2\pi ik\tilde r}\right)}.$$
We  modify this solution slightly to make it well defined also at $\tilde 
r=p/q$. To this end, let us consider a $\mathcal{C}^\infty$ function 
$\mu(x)$ such that
\begin{equation*}
\mu(x)=\left\{\begin{array}{rcl}
1 &\textrm{ if }& |x|\leq1,\\
0 &\textrm{ if }& |x|\geq2,
\end{array}\right.
\end{equation*}
and $0<\mu(x)<1$ if $|x|\in(1,2)$. Then we define
$$\mu_k(\tilde r)=\mu\left(\frac{1-e^{2\pi ik\tilde r}}{2\pi k
\beta
%\eps^{1/6}
}\right),$$
and take
\begin{equation}\label{defS1k}
 S_1^k(\tilde r)=\frac{i\Ev^{k}(r)(1-\mu_k(\tilde r))}{2\pi k(1-e^{2\pi ik\tilde r})}.\end{equation}
This function is well defined since the numerator is identically zero in a 
neighbourhood of $\tilde r=p/q$, the unique zero 
of the denominator (if it is a zero indeed, that is, if 
$k\in\mathcal{N}\cap q\mathbb{Z}$, see \eqref{def:FourierSupp}). More precisely,
we claim that
\begin{equation}\label{valuesmuk}
 \mu_k(\tilde r)=\left\{\begin{array}{ccl}
 1&\textrm{ if }& k\in\mathcal{N}\cap q\mathbb{Z} \ \textrm{ and }\ |\tilde
r-p/q|\leq 
\beta
%\eps^{1/6}
/2,\\
0&\textrm{ if }& k\in\mathcal{N}\cap q\mathbb{Z} \ \textrm{ and }\ |\tilde
r-p/q|\geq 3\beta
%\eps^{1/6}
,
\\
0&\textrm{ if }&k\not\in\mathcal{N}\cap q\mathbb{Z}.
\end{array}\right.
\end{equation}
Indeed if $k\in\mathcal{N}\cap q\mathbb{Z}$ there exists a constant $M$ 
independent of $\tilde r$ and $\eps$ such that
$$
\frac{1}{\beta
%\eps^{1/6}
}|\tilde r-p/q|(1-M|\tilde r-p/q|)
\leq
\left|\frac{1-e^{2\pi ik\tilde r}}{2\pi k \beta
%\eps^{1/6}
}\right|\leq\frac{1}{ \beta
%\eps^{1/6}
}|\tilde r-p/q|(1+M|\tilde r-p/q|).
$$
Then, on the one hand, if $k\in\mathcal{N}\cap q\mathbb{Z}$ and 
$|\tilde r-p/q|\leq \beta
%\eps^{1/6}
/2$ we have:
$$
\left|\frac{1-e^{2\pi ik\tilde r}}{2\pi k \beta
%\eps^{1/6}
}\right|\leq\frac{1}{2}+\frac{M}{4} \beta
%\eps^{1/6}
<1,$$
for $\beta$ sufficiently small, and thus $\mu_k(\tilde r)=1$. 
On the other hand, if $|\tilde r-p/q|\geq 3\beta
%\eps^{1/6}
$ then
$$
\left|\frac{1-e^{2\pi ik\tilde r}}{2\pi k\beta
%\eps^{1/6}
}\right|\geq 3-9M \beta
%\eps^{1/6}
>2,$$
for $\beta$ sufficiently small, and thus $\mu_k(\tilde r)=0$. 
Finally, if $k\not\in \mathcal{N}\cap q\mathbb{Z}$ then
$$\left|\frac{1-e^{2\pi ik\tilde r}}{2\pi k \beta
%\eps^{1/6}
}\right|\geq \frac{M}{\beta}
%\eps^{-1/6}
>2$$
for $\beta$ sufficiently small and then we also have 
$\mu_k(\tilde r)=0$.

Now we proceed to check that the first order terms of \eqref{PhiinverseEfPhi} take the form \eqref{NFfar} if 
$|\tilde r-p/q|\geq 3\beta$
%\eps^{1/6}$ 
and \eqref{NFnear} if 
$|\tilde r-p/q|\leq \beta/2$.
%\eps^{1/6}/2$. 
On the one hand, by definitions in  \eqref{defS1k} of 
the coefficients $S_1^k(\tilde r)$ and in \eqref{defB2hat} 
of $\hat B_1$, we have
$$
\hat B_1=\sum_{0<|k|\leq d}
\mu_k(\tilde r)\Ev^k(\tilde r)e^{2\pi i k\tilde\theta}.$$
Then, recalling \eqref{valuesmuk} we obtain
\begin{equation}\label{valuesB1hat}
 \hat B_1=\left\{\begin{array}{lcl}0&\quad
 \textrm{ if }&|\tilde r-p/q|\geq 3\beta
 %3\eps^{1/6},
 \\ 
 \displaystyle\sum_{k\in\mathcal{N}\cap q\mathbb{Z}}
 \Ev^k(\tilde r)e^{2\pi i k\tilde\theta}= 
 \Ev_{p,q}(\tilde\theta,\tilde r)&\quad
 \textrm{ if }&|\tilde r-p/q|\leq \beta/2
 %\eps^{1/6}/2
 .\end{array}\right. 
\end{equation}
where we have used the definition \eqref{defEvpq} of 
$\Ev_{p,q}(\tilde\theta,\tilde r)$. On the other hand, from 
the definition \eqref{defS1k} of $S_1^k(\tilde r)$ one can check that
\begin{eqnarray*}
 &&-\partial_{\tilde r}S_1(\tilde\theta,\tilde r)+
 \partial_{\tilde r}S_1(\tilde\theta+\tilde r,\tilde r)\\
 &&=-\partial_\theta S_1(\tilde\theta+\tilde r,\tilde
r)-\sum_{0<|k|\leq d}\frac{i(\Ev^k)'(\tilde r)(1-\mu_k(\tilde r))+
 i\Ev^k(\tilde r)\mu_k'(\tilde r)}{2\pi k}e^{2\pi i k\tilde\theta}.
\end{eqnarray*}
Recalling definitions \eqref{defA1hat} of $\hat A_1$ and 
\eqref{defB2hat} of $\hat B_1$, this implies that
\begin{equation}\label{hatA1rewrite}
\begin{split}
\hat A_1=&\Eu(\tilde\theta,\tilde r)-\Ev(\tilde\theta,\tilde r)+\hat B_1\\
&-\sum_{0<|k|\leq d}\frac{i(\Ev^k)'(\tilde r)(1-\mu_k(\tilde r))+i\Ev^k(\tilde
r)\mu_k'(\tilde r)}{2\pi k}e^{2\pi i k\tilde\theta}.
\end{split}
\end{equation}
Then we use \eqref{valuesB1hat} and \eqref{valuesmuk} 
again, noting that $\mu'_k(\tilde r)=0$ in both regions 
$|\tilde r-p/q|\geq 3\beta$
%\eps^{1/6}$ 
and $|\tilde r-p/q|\leq \beta/2$,
%\eps^{1/6}/2$. 
Moreover, we note that for $|\tilde r-p/q|\leq \beta/2$.
%\eps^{1/6}/2$:
$$
\Ev_{p,q}(\tilde\theta,\tilde r)=
\Ev_{p,q}(\tilde\theta,p/q) +\mathcal{O}(\beta
%\eps^{1/6}
),$$
$$(\Ev^k)'(\tilde r)=(\Ev^k)'(p/q)+\mathcal{O}(
\beta
%\eps^{1/6}
).$$
Define 
\begin{equation}\label{defE1}
E_1(\tilde\theta,\tilde r)=-\sum_{0<|k|\leq d}
\frac{i(\Ev^k)'(\tilde r)}{2\pi k}e^{2\pi i k\tilde\theta}.
\end{equation}
Then  the same holds for $\Eu(\tilde\theta,\tilde r)$ and $\Ev(\tilde\theta,\tilde r)$: recalling definition 
\eqref{defE3} of $E_3$, equation \eqref{hatA1rewrite} yields
\begin{equation}\label{valuesA1hat}
 \hat A_1=\left\{\begin{array}{lcl}\Eu(\tilde\theta,\tilde r)-\Ev(\tilde\theta,\tilde r)+E_1(\tilde\theta,\tilde r)&\,
\textrm{ if }&|\tilde r-p/q|\geq 3\beta
 %\eps^{1/6}
 ,\\ \Delta \E(\tilde\theta,p/q)
+\Ev_{p,q}(\tilde\theta)+E_3(\tilde\theta)+\mathcal{O}(\beta) &\,\textrm{ if
}&|\tilde r-p/q|\leq \beta/2,\end{array}\right.
\end{equation}
where $\Eu(\tilde\theta,p/q)-\Ev(\tilde\theta,p/q)=\Delta \E(\tilde\theta,p/q)$. 
In conclusion, by \eqref{valuesA1hat} and \eqref{valuesB1hat} 
we obtain that the first order terms of \eqref{Phiinverse} coincide 
with the first order terms of \eqref{NFfar} and \eqref{NFnear} in 
each region.

For the $\eps^2-$terms we rename $\hat B_2$ in the following way
\begin{equation}\label{defE2}
\begin{split}
E_2(\tilde\theta,\tilde r)&=\hat B_2|_{\{|\tilde r-p/q|\geq
3\beta
%\eps^{1/6}
\}}, \\
E_4(\tilde\theta,\tilde r)&=\hat B_2|_{\{|\tilde r-p/q|\leq\beta/2
%\eps^{1/6}/2
\}}.
\end{split}
\end{equation}
Now we  see that $E_2$ satisfies \eqref{eq:drift}. To avoid 
long notation, in the following we do not write explicitly that 
expressions $A_i$, $B_i$, $\hat A_i$ and $\hat B_i$ are restricted 
to the region $\{|\tilde r-p/q|\geq 3 \beta%\eps^{1/6}
\}$.  We note that since in 
this region we have $\hat B_1=0$ by \eqref{valuesB1hat}, recalling 
the definition \eqref{defB2hat} of $\hat B_1$ it is clear that 
$B_1=\partial_\theta S_1(\tilde\theta+\tilde r,\tilde r)$. Hence, from 
definition \eqref{defB2hat} of $\hat B_2$ it is straightforward to see that
\begin{equation}\label{hatB2simple}
\hat B_2=B_2-\partial_\theta^2S_1(\tilde\theta+\tilde r,\tilde r)A_1.
\end{equation}
Recalling that $\hat A_1=A_1+\partial_{\tilde r}S_1(\tilde\theta+\tilde
r,\tilde r)$ and  using the definition of $A_1$ in
\eqref{defA1} and  the definition \eqref{defB2} of $B_2$,
\begin{equation}\label{B2hatequality}
\begin{split} 
E_2(\tilde\theta,\tilde r)=&\,\hat B_2|_{\{|\tilde r-p/q|\geq3\beta%\eps^{1/6}
\}}\\
=&\,\E w(\tilde\theta,\tilde r)-\partial_\theta\Ev(\tilde\theta,\tilde
r)\partial_{\tilde r}S_1(\tilde\theta,\tilde r) \\
&+\partial_r\Ev(\tilde\theta,\tilde r)\partial_\theta S_1(\tilde\theta,\tilde
r)-\partial^2_\theta S_1(\tilde\theta,\tilde r)\partial_{\tilde
r}S_1(\tilde\theta,\tilde r) \\
&-\partial_\theta^2S_1(\tilde\theta+\tilde r,\tilde
r)\left[\Eu(\tilde\theta,\tilde r)+\partial_\theta S_1(\tilde\theta,\tilde
r)-\partial_r S_1(\tilde\theta,\tilde r)\right]\\
&-\partial_{\theta}\partial_{\tilde r}S_1(\tilde\theta+\tilde r,\tilde
r)\left[\E v(\tilde\theta,\tilde r)+\partial_\theta S_1(\tilde\theta,\tilde
r)-\partial_\theta S_1(\tilde\theta+\tilde r,\tilde r)\right]. 
\end{split}
\end{equation}
Since, for $|\tilde r-p/q|\geq3\beta$,  $S_1$ satisfies
\eqref{eq:cohomological}, the last row of the definition of $E_2$ vanishes and
the same happens with 
\[
 \begin{split}
  -\partial_\theta \E v(\tilde\theta,\tilde r)\pa_{\tilde r}
S_1(\tilde\theta,\tilde r)-\pa_\theta^2S_1(\tilde\theta,\tilde r)\pa_{\tilde
r}S_1(\tilde\theta,\tilde r)+\pa_\theta^2 S_1(\tilde\theta+\tilde r,\tilde
r)\pa_{\tilde r}S_1(\tilde\theta,\tilde r)&=\\
\pa_{\tilde r}S_1(\tilde\theta,\tilde r)\left( -\partial_\theta  \E
v(\tilde\theta,\tilde r)-\pa_\theta^2S_1(\tilde\theta,\tilde r)+\pa_\theta^2
S_1(\tilde\theta+\tilde r,\tilde
r)\right)&=0.
 \end{split}
\]
Therefore,
\[
 \begin{split} 
b(\tilde r)=&\int_0^1 E_2(\tilde\theta,\tilde r)d\tilde\theta\\
=&\int_0^1 \Big(\E w(\tilde\theta,\tilde
r)+\partial_{\tilde r}\Ev(\tilde\theta,\tilde
r)\partial_{\theta}S_1(\tilde\theta,\tilde r) \\
&-\partial^2_\theta S_1(\tilde\theta+\tilde r,\tilde
r)\left(\Eu(\tilde\theta,\tilde r)+\partial_\theta S_1(\tilde\theta,\tilde
r)\right)\Big)d\tilde \theta.
\end{split}
\]
Using $\partial^2_\theta S_1(\tilde\theta+\tilde r,\tilde
r)=\partial^2_\theta S_1(\tilde\theta,\tilde
r)+\partial_\theta\Ev(\tilde\theta,\tilde
r)$ and taking into account that $\int_0^1  \partial^2_\theta
S_1(\tilde\theta,\tilde
r)\partial_\theta S_1(\tilde\theta,\tilde
r)d\tilde\theta=0$, we have that 
\[
 \begin{split}
  b(\tilde r)=&\int_0^1 \Big(\E w(\tilde\theta,\tilde
r)+\partial_{\tilde r}\Ev(\tilde\theta,\tilde
r)\partial_{\theta}S_1(\tilde\theta,\tilde r)-\pa_\theta \Ev(\tilde\theta,\tilde
r)\E u(\tilde\theta,\tilde
r) \\
&-\partial_\theta \Ev(\tilde\theta,\tilde
r)\partial_{\theta}S_1(\tilde\theta,\tilde
r)-\partial^2_{\theta}S_1(\tilde\theta,\tilde r)\Eu(\tilde\theta,\tilde
r)\Big)d\tilde \theta.
 \end{split}
\]
Integrating by parts, we obtain \eqref{eq:drift}.
% 
% 
% Now we show that this expression has the claimed average \eqref{eq:drift}. 
% On the one hand, it is clear that $\partial^2_\theta S_1(\tilde\theta,\tilde r)\partial_{\tilde r}S_1(\tilde\theta,\tilde r)$ and $\partial^2_\theta S_1(\tilde\theta+\tilde r,\tilde r)\partial_{\tilde r}S_1(\tilde\theta+\tilde r,\tilde r)$ have the same average, so:
% $$\int_0^1-\partial^2_\theta S_1(\tilde\theta,\tilde r)\partial_{\tilde r}S_1(\tilde\theta,\tilde r)+\partial_\theta^2S_1(\tilde\theta+\tilde r,\tilde r)\partial_{\tilde r}S_1(\tilde\theta+\tilde r,\tilde r)d\tilde\theta=0.$$
% On the other hand, writing explicitly the zeroth Fourier coefficient of the product, one can see that:
% $$\int_0^1-\partial_\theta\Ev(\tilde\theta,\tilde r)\partial_{\tilde r}S_1(\tilde\theta,\tilde r)-\partial_\theta^2S_1(\tilde\theta+\tilde r,\tilde r)E_1(\tilde\theta,\tilde r)d\tilde\theta=0.$$
% Thus, recalling \eqref{defE2} and using these two facts in equation \eqref{B2hatequality} we obtain:
% $$\int_0^1E_2(\tilde\theta,\tilde r)d\tilde\theta=
% $$
% $$
% \int_0^1\left[\partial_r\Ev(\tilde\theta, \tilde r)\partial_\theta S_1(\tilde\theta,\tilde r)-\partial_\theta^2S_1(\tilde\theta,\tilde r)\left(\Eu(\tilde\theta,\tilde r)-\Ev(\tilde\theta,\tilde r)\right)\right]d\tilde\theta,$$
% so that \eqref{eq:drift} is proved.

We note that, from the definition \eqref{defS1k} of the Fourier coefficients  of 
$S_1$, it is clear that $S_1$ is $\mathcal{C}^l$ with respect to $r$. Since it 
just has a finite number of nonzero coefficients, it is analytic with respect to 
$\theta$. Then, from the definitions \eqref{defE2} of $E_2$ and 
$E_4$ and the expression $\eqref{defB2hat}$ of $\hat B_2$, it is clear that both 
$E_2$ and $E_4$ are $\mathcal{C}^{l-1}$.

Finally we  bound the $\mathcal{C}^0$-norms of the functions $E_2$, $b(r)$ and 
$E_4$ and also the error terms. To that aim, we bound the 
$\mathcal{C}^l$ norms of $S_1$ and its derivatives. We will use Lemma 
\ref{lemClnorms} and proceed similarly as in \cite{BKZ}. We note that
\begin{enumerate}
	\item If $\mu_k(\tilde r)\neq 1$ we have $|1-e^{2\pi ik\tilde r}|>M 
	\beta
	%\eps^{1/5}
	|k|$, and thus
	$$\left|\frac{1}{1-e^{2\pi ik\tilde r}}\right|<M^{-1}
		\beta^{-1}
		%\eps^{-1/5}
		|k|^{-1}.$$
	\item Then, using that $\|f\circ g\|_{\mathcal{C}^l}\leq
C\|f{|_{\textrm{Im}(g)}}\|_{\mathcal{C}^l}\left(1+\|g\|_{\mathcal{C}^l}
^l\right)$, we get that
	$$\left\|\frac{1}{1-e^{2\pi ik\tilde r}}\right\|_{\mathcal{C}^l}\leq 
M	\beta
	%\eps^{-(l+1)/5}
	^{-(l+1)}|k|^{-(l+1)},$$
	for some constant $M$, not the same as item 1.
	\item Using the rule for the norm of the composition again 
and the fact that $\|\mu\|_{\mathcal{C}^l}$ is bounded 
independently of $\beta$, we get
	$$\|\mu_k(\tilde r)\|_{\mathcal{C}^l}\leq M\beta^{-l}
	%\eps^{-l/5}
	|k|^{-l},$$
	for some constant $M$, and the same bound is obtained 
for $\|1-\mu_k(\tilde r)\|_{\mathcal{C}^l}$.
\end{enumerate}
Using items $2$ and $3$ above and the fact that $\|\Ev^k\|_{\mathcal{C}^l}$ are
bounded, we get that
{\small \begin{eqnarray*}
\left\|\partial_{\tilde r^\alpha}\left[\frac{1-\mu_k(\tilde r)i\Ev^k(\tilde r)}{2\pi k(1-e^{2\pi ik\tilde r})}\right]\right\|_{\mathcal{C}^0}&\leq&M_1\sum_{\alpha_1+\alpha_2=\alpha}\frac{1}{2\pi|k|}\|1-\mu_k(\tilde r)\|_{\mathcal{C}^{\alpha_1}}\left\|\frac{1}{1-e^{2\pi ik\tilde r}}\right\|_{\mathcal{C}^{\alpha_2}}\\
&\leq& M_2\beta^{-(\alpha+1)}
%\eps^{-(\alpha+1)/5}
|k|^{-\alpha-2}.
\end{eqnarray*}}
Then, by item $2$ of Lemma \ref{lemClnorms}, we obtain
$$ \|S_1\|_{\mathcal{C}^l}\leq M\beta^{-(l+1)}%\eps^{-(l+1)/6}
.$$
One can also see that $\|\partial_{\tilde r} S_1\|_{\mathcal{C}^l}\leq
M\|S_1\|_{\mathcal{C}^{l+1}}$ and $\|\partial_\theta S_1\|_{\mathcal{C}^l}\leq
M\|S_1\|_{\mathcal{C}^l}$. In general, one has
\begin{equation}\label{boundderivsS1}
 \|\partial_\theta^n\partial_{\tilde r}^m S_1\|_{\mathcal{C}^l}\leq M\beta^{-(l+m+1)}
 %\eps^{-(l+m+1)/6}
 .
\end{equation}
Now, recalling definitions \eqref{defE2} of $E_2$ and $E_4$,
and using  \eqref{B2hatequality},
bound \eqref{boundderivsS1} implies that for $0\leq s\leq l-1$ there exists some
$K>0$ independent of $\eps$ and $\beta$ such that
$$\|E_2\|_{\mathcal{C}^s}\leq K\beta%\eps^{1/2}
^{-(2s+3)},\qquad \|E_4\|_{\mathcal{C}^0}\leq K\beta
%\eps^{1/2}
^{-(2s+3)}.$$
To bound the $\CCC^s$ norm, $0\leq s\leq l-1$,  of $b(r)$ in  \eqref{eq:drift}, 
 we use again \eqref{boundderivsS1} to obtain
$$
\|b\|_{\mathcal{C}^s}\leq K\beta^{-(s+1)}
%\eps^{-1/6}
.$$
Similarly, and taking into account that for $n=1,2$ we have
$$
\|\Eu\|_{\mathcal{C}^{s+n}}\leq K,\|\Ev\|_{\mathcal{C}^{s+n}}\leq K,
$$ 
because  $s\leq l-2$,  the error term in 
the equation for $\tilde r$ satisfies
\begin{equation}\label{errorr}
 \eps^3\hat B_3=\mathcal{O}_s(\eps^3\beta^{-(3s+5)}),
\end{equation}
and the error terms for the equation of $\tilde \theta$,
\begin{equation}\label{errortheta}
 \eps^2\hat A_2=\mathcal{O}_s(\eps^2\beta^{-(2s+4)}).
\end{equation}
This completes the proof for the normal forms \eqref{NFfar} and \eqref{NFnear} 
(in the latter case, we have to take into account 
the extra error term of order $\mathcal{O}(\eps^{1+a})$ caused 
by the $\beta$
%\eps^{1/6}-$
--error term in \eqref{valuesA1hat}).

To prove \eqref{normPhi-Id}, we just need to recall \eqref{Phi} and 
use \eqref{boundderivsS1}. Then one obtains
$$\|\Phi-\textrm{Id}\|_{\mathcal{C}^2}\leq M'\eps\|S_1\|_{\mathcal{C}^3}.
%\leq M\beta\eps^{1/6}
$$
\end{proof}

From now on we  consider that our deterministic system is 
in  normal form, and we  drop tildes.

\section{Analysis of the Martingale problem in the strips
of each type}

After performing the change to normal form (Theorem \ref{thm:normal-form}), the 
$n$-th iteration of the original map (see \eqref{mapthetanrn}), becomes both in
the Totally Irrational and Imaginary Rational zones of the form
\begin{equation}\label{eq:NRmap-n}
\begin{split}
\theta_n=&\displaystyle\theta_0+nr_0+\mathcal{O}(n^2\eps),\\
r_n=&\displaystyle r_0+\eps\sum_{k=0}^{n-1}\omega_k[v(\theta_k,r_k)+\eps
v_2(\theta_k,r_k)]\\
&+\eps^2\sum_{k=0}^{n-1}E_2(\theta_k,r_k)+\mathcal{O}(n\eps^{
2+a}),
\end{split}
\end{equation}
where $v_2(\theta,r)$ is a given function which can be written explicitly in
terms of $v(\theta,r)$ and $S_1(\theta,r)$.

\subsection{The TI case}\label{sec:TI-case}

Recall that we have defined $\ga\in (4/5,4/5+1/40)$ and $\nu=1/4$. A 
strip $I_\ga$ is a  totally irrational segment if $p/q\in I_\ga$,
then $|q|>\eps^{-b},$ where $0<2b<\nu$ and that we define
$b=(\nu-\rho)/2$ for a certain $0<\rho<\nu$. In the following we shall 
assume that $\rho$ satisfies an extra condition, which ensures that certain
inequalities are satisfied. These inequalities involve the degree of
differentiability of certain $\mathcal{C}^l$ functions. Assume that $l\geq 6$. 
% Then we have that:
% $\frac{l-7}{l-1}>0,\ \lim_{l\to\infty}\frac{l-7}{l-1}=1.$
Then, there exists a constant $R>0$ such that
\begin{equation}\label{constantR}
R= \frac{l-5}{l-2}>0,\quad \textrm{ for all }\quad l\geq 6.
\end{equation}
We choose $\rho$, satisfying
\begin{equation}\label{conditionbeta}
\quad \rho=R\nu.
\end{equation}
% we take $\rho$ satisfying
% \begin{equation}\label{conditionrho}
%  0<\rho<R\beta.
% \end{equation}
% 
% 
% \begin{lem}\label{lemmasigma2:old}\textcolor{red}{Old Version. I keep it until
% everything is checked}
% %Let $a>0$ be a small number. 
% Let $g$ be a $\mathcal{C}^l$ function,$l\ge 8$.
% %, and $\sigma^2=\int_0^1v^2(\theta)d\theta$ be its square average. 
% Suppose $r^*$ satisfies the following condition: if for some 
% rational $p/q$ we have $|r^*-p/q|<\eps^\beta$, then $|q|>\eps^{-b}$. 
% Then for any $A$ such that 
% $$2\beta<A<(l-1)b-\beta, \quad  
% 0<\tau=\textb{1-2A}\le \textcolor{red}{\min\{A-2\beta,(l-1)b-A-\beta\}}
% $$ 
% and 
% $\eps$ small enough there is 
% $N\le \eps^{-A}$
% such that for some $K$ independent of $\eps$ and any $\theta^*$ we have:
% $$\left|N\,\int_0^1g(\theta,r^*)d\theta-\sum_{k=0}^{N-1}g(\theta^*+kr^*,r^*)\right|\leq K\eps^{\tau+\beta}.$$
% %\textcolor{red}{I get that $N\le\eps^{-a}$. Maybe it's better to write 
% %directly the bound:
% %$$\left|\sum_{k=0}^N(g(\theta_0+kr_0)-\sigma^2)\right|\leq K\eps^{a}$$
% %which is the bound we prove and the one we use later.}
% \textcolor{red}{In particular, one can choose any $1/4<\beta<1/3,\ A=7\beta/3,\ b=(\beta-\rho)/2$ and a sufficiently small $\rho$ satisfying \eqref{conditionrho}.}
% \end{lem}

\begin{lem}\label{lemmasigma2}
%Let $a>0$ be a small number.
Fix $\tau\in (0,1/40)$ and let $g$ be a $\mathcal{C}^l$ function, $l\ge 6$.
%, and $\sigma^2=\int_0^1v^2(\theta)d\theta$ be its square average. 
Suppose $r^*$ satisfies the following condition: if for some 
rational $p/q$ we have $|r^*-p/q|<\eps^\nu$, then $|q|>\eps^{-b}$. 
Then,
% for any $A$ such that 
% $$2\beta<A<(l-1)b-\beta, \quad  
% 0<\tau=\textb{1-2A}\le \textcolor{red}{\min\{A-2\beta,(l-1)b-A-\beta\}}
% $$ 
% and 
for $\eps>0$ small enough there is 
$N\le \eps^{-(\nu+b+2\tau)}$
such that for some $K$ independent of $\eps$ and any $\theta^*$ we have
\[\left|N\,\int_0^1g(\theta,r^*)d\theta-\sum_{k=0}^{N-1}g(\theta^*+kr^*,
r^*)\right|\leq K\eps^{\tau}. \]
%\textcolor{red}{I get that $N\le\eps^{-a}$. Maybe it's better to write 
%directly the bound:
%$$\left|\sum_{k=0}^N(g(\theta_0+kr_0)-\sigma^2)\right|\leq K\eps^{a}$$
%which is the bound we prove and the one we use later.}
% \textcolor{red}{In particular, one can choose any $1/4<\beta<1/3,\
%A=7\beta/3,\
% b=(\beta-\rho)/2$ and a sufficiently small $\rho$ satisfying
% \eqref{conditionrho}.}
\end{lem}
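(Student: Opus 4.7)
The plan is to estimate the discrepancy between the Birkhoff sum for rotation by $r^*$ and its ergodic mean by Fourier analysis on $\T$, exploiting the Diophantine-type hypothesis on $r^*$ to bound small-denominator contributions, and Dirichlet's approximation theorem to select $N$ so that the resulting Weyl sums are small.

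First, I expand $g(\theta,r^*)=\sum_{k\in\Z}g_k(r^*)e^{2\pi ik\theta}$. Since $g\in\CCC^l$ with $l\ge 6$, integration by parts $l$ times in $\theta$ yields $|g_k(r^*)|\le C\|g\|_{\CCC^l}|k|^{-l}$ for $k\neq 0$. Consequently,
\[
\sum_{j=0}^{N-1}g(\theta^*+jr^*,r^*)-N\int_0^1 g(\theta,r^*)\,d\theta=\sum_{k\ne 0}g_k(r^*)e^{2\pi ik\theta^*}\frac{e^{2\pi iNkr^*}-1}{e^{2\pi ikr^*}-1},
\]
and the task reduces to bounding this Fourier sum by $K\eps^\tau$ for an appropriate $N\le\eps^{-(\nu+b+2\tau)}$.

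I split the Fourier sum at the cutoff $K_0:=\eps^{-b}$. For $0<|k|\le K_0$, the hypothesis on $r^*$ implies $\|kr^*\|\ge|k|\eps^\nu$: indeed, for any integer $m$, writing $m/k=m'/q$ in lowest terms gives denominator $q\le|k|\le\eps^{-b}$, which by assumption forces $|r^*-m'/q|\ge\eps^\nu$, hence $|r^*-m/k|\ge\eps^\nu$. Therefore $|e^{2\pi ikr^*}-1|\ge 4|k|\eps^\nu$. To make the numerators small, I apply Dirichlet's theorem with $Q=\lceil\eps^{-(\nu+\tau)}\rceil$: there exists $1\le N\le Q$ with $\|Nr^*\|\le 1/Q\le\eta:=\eps^{\nu+\tau}$. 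For $|k|\le K_0$ one gets $\|Nkr^*\|\le|k|\eta\le K_0\eta=\eps^{\nu-b+\tau}<1/2$ for small $\eps$ (using $\nu>b$), so $|e^{2\pi iNkr^*}-1|\le 2\pi|k|\eta$. Hence the small-$k$ contribution is bounded by $\sum_{0<|k|\le K_0}(C|k|^{-l})(\pi\eta/(2\eps^\nu))\le C'\eps^\tau$.

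For the tail $|k|>K_0$, using the trivial bound $|(e^{2\pi iNkr^*}-1)/(e^{2\pi ikr^*}-1)|\le N$ gives a contribution $\le CN K_0^{-(l-1)}=C\eps^{b(l-1)-(\nu+\tau)}$. This is $\le C\eps^\tau$ provided $b(l-1)\ge\nu+2\tau$; with $b=(\nu-\rho)/2$ and $\rho=(l-5)\nu/(l-2)$ from \eqref{conditionbeta} we have $b=3\nu/(2(l-2))$, so $b(l-1)=3\nu(l-1)/(2(l-2))\ge 3\nu/2$, which exceeds $\nu+2\tau$ for $\tau<1/40$ and $l\ge 6$. Finally $N\le Q\le\eps^{-(\nu+\tau)}+1\le\eps^{-(\nu+b+2\tau)}$ for $\eps$ small, completing the proof. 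The main technical point is calibrating the cutoff $K_0$ and the Dirichlet length $1/\eta$ consistently so that both the small- and large-$k$ bounds are $O(\eps^\tau)$, which is precisely what the threshold $\rho=R\nu$ in \eqref{conditionbeta} was designed to accommodate.
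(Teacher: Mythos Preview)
Your proof is correct and follows essentially the same approach as the paper: Fourier expansion, splitting at the cutoff $\eps^{-b}$, using the Diophantine hypothesis to lower-bound the denominators $|e^{2\pi ikr^*}-1|$, and choosing $N$ via an approximation argument to make the numerators small. The only cosmetic difference is that the paper selects $N$ via continued fractions/pigeonhole with $\|Nr^*\|\lesssim\eps^{\nu+b+2\tau}$ (and hence $N\le\eps^{-(\nu+b+2\tau)}$ on the nose), whereas you invoke Dirichlet with the slightly sharper $\|Nr^*\|\le\eps^{\nu+\tau}$ and correspondingly smaller $N\lesssim\eps^{-(\nu+\tau)}$; both choices feed into the same numerator/denominator estimate and the same tail bound.
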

\begin{proof} Denote $g_0(r)=\int_0^1g(\theta,r)d\theta$. 
Expand $g(\theta,r)$ in its Fourier series, i.e.
$$
g(\theta,r)=g_0(r)+\sum_{m\in \Z\setminus \{0\}} g_m(r) e^{2\pi im\theta}
$$
for some $g_m(r):\mathbb{R}\to \mathbb{C}$. Then we have
\begin{equation}\label{rewritesum}
\begin{split}
\sum_{k=0}^{N-1}&(g(\theta^*+kr^*,r^*)-g_0(r^*))=
\sum_{k=0}^{N-1}\sum_{m\in \Z\setminus \{0\}}g_m(r^*) e^{2\pi
im(\theta^*+kr^*)}\\
=&\sum_{k=0}^{N-1}\sum_{1\leq|m|\leq[\eps^{-b}]}g_m(r^*) e^{2\pi
im(\theta^*+kr^*)}+\sum_{k=0}^N\sum_{|m|\geq[\eps^{-b}]}g_m(r^*) e^{2\pi
im(\theta^*+kr^*)}\\
=&\sum_{1\leq|m|\leq[\eps^{-b}]}g_m(r^*)e^{2\pi im\theta^*}\sum_{k=0}^{N-1}
e^{2\pi imkr^*}+\sum_{k=0}^{N-1}
\sum_{|m|\geq[\eps^{-b}]}g_m(r^*) e^{2\pi im(\theta^*+kr^*)}\\
=&\sum_{1\leq|m|\leq[\eps^{-b}]}g_m(r^*)e^{2\pi im\theta^*}
\frac{e^{2\pi iNmr^*}-1}{e^{2\pi imr^*}-1}+
\sum_{k=0}^{N-1}\sum_{|m|\geq[\eps^{-b}]}
g_m(r^*) e^{2\pi im(\theta^*+kr^*)}.\\
\end{split}
\end{equation}
To bound the first sum in \eqref{rewritesum} we distinguish into the following
cases
\begin{itemize}
\item If $r^*$ is rational $p/q$, we know that $|q|>\eps^{-b}$. 
\begin{itemize}
\item 
If $|q|\le \eps^{-(\nu+b+2\tau)}$, then pick $N=|q|$ and the first sum
vanishes. 

\item If $|q|>\eps^{-(\nu+b+2\tau)}$, then by definition of $r^*$ for 
any $s/m$ with $|m|<\eps^{-b}$ we have
or $|m r^*-s|>\eps^\nu$. By the pigeon hole principle 
there exist integers $0<N=\tilde q<\eps^{-(\nu+b+2\tau)}$ and $\tilde p$ such
that
$|\tilde qr^*-\tilde p|\le 2 \eps^{\nu+b+2\tau}$. 
\end{itemize}
\item If $r^*$ is irrational, consider a continuous fraction expansion 
$p_n/q_n\to r^*$ as $n\to \infty$. Choose $p'/q'=p_n/q_n$ with
$n$ such that $q_{n+1}>\eps^{-(\nu+b+2\tau)}$. This implies that 
$|q'r^*-p'|<1/|q_{n+1}|\le \eps^{\nu+b+2\tau}$. 

The same argument as above 
shows that for any value $|m|<\eps^{-b}$ we have
\hbox{$|m r^*-s|>\eps^\nu$}.
\end{itemize}

Let $N$ be as above. Then, since $|m|\leq \eps^{-b}$, 
\[\left|\sum_{1\leq|m|\leq[\eps^{-b}]}\ g_m(r^*)\ e^{2\pi im\theta^*}\
\frac{e^{2\pi iNmr^*}-1}{e^{2\pi imr^*}-1}\right|\leq 2
\eps^{\tau}\sum_{1\leq|m|\leq[\eps^{-b}]}|g_m(r^*)|.\]
Since $g(\theta,r)$ is $\mathcal{C}^l$,  its Fourier
coefficients satisfy $|g_m(r^*)|\le C|m|^{-l},\ m\ne 0$. Thus we can bound the
first sum in \eqref{rewritesum} by
\[
%\begin{equation}\label{firstsum}
\left|\sum_{1\leq|m|\leq[\eps^{-b}]}\ g_m(r^*)\ e^{2\pi im\theta^*}\
\frac{e^{2\pi iNmr^*}-1}{e^{2\pi imr^*}-1}\right|
\leq K\eps^{\tau}\sum_{1\leq|m|\leq[\eps^{-b}]}\frac{1}{m^2
}\leq K\eps^\tau.
%\end{equation}
\]
To bound the second sum, we use again the bound for the Fourier coefficients
$g_m(r^*)$
\begin{equation}\label{secondsum}
%\beal
\left|\sum_{k=0}^N\sum_{|m|\geq[\eps^{-b}]}g_m(r^*) e^{2\pi
im(\theta+kr^*)}\right|\leq N\sum_{|m|\geq[\eps^{-b}]}\frac{1}{m^l}
%\qquad \qquad \qquad
% \\
% KN\eps^{(l-1)b}
 \le K \eps^{(l-1)b-(\nu+b+2\tau)}. 
%\enal
\end{equation}
Taking into account that   $b=(\nu-\rho)/2$, $\rho\leq R\nu$
where $R=(l-5)/(l-2)$, $\nu=1/4$ and $\tau\in (0,1/40)$, one obtains
\[
 \left|\sum_{k=0}^N\sum_{|m|\geq[\eps^{-b}]}g_m(r^*) e^{2\pi
im(\theta+kr^*)}\right|\leq
K\eps^{\frac{\nu}{2}-2\tau}\leq K\eps^{\tau}
\]
\end{proof}

Fix a totally irrational strip $I_\ga$ and let $(\theta_0,r_0)\in I_\ga$. 
Recall that 
$n_\ga\leq n\leq 
s\eps^{-2}$
is either the exit time from $I_\ga$, that is the first number such that
$(\theta_{n_\ga+1},r_{n_\ga+1})\not\in I_{\ga}$  or 
$n_\ga=n$ the final time. 

\begin{lem}\label{lemma:TI:exittime}
Fix $\ga\in (4/5,4/5+1/40)$. Then, there
exists a constant  $C>0$ such that, 
\begin{itemize}
\item For any $\de\in (0, 2(1-\ga))$ and $\eps>0$ small
enough,
\[
\Prob\{n_\ga<\eps^{-2(1-\ga)+\delta}\}\leq
e^{-\frac{C}{\eps^{\delta}}}.
\]
\item For any   $\de>0$ and $\eps>0$ small
enough,
\[
\Prob\{\eps^{-2(1-\ga)-\delta}<n_\ga<s\eps^{-2}\}\leq
e^{-\frac{C}{\eps^{\delta}}}.
\]
\end{itemize}
\end{lem}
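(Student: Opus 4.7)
The plan is to write $r_n - r_0$ as a martingale plus small drift and attack each inequality with a different martingale estimate. Set $g(\theta,r) := v(\theta,r) + \eps v_2(\theta,r)$, and relative to the filtration $\mathcal{F}_k := \sigma(\omega_0,\ldots,\omega_{k-1})$ define
$$M_n = \sum_{k=0}^{n-1}\omega_k\,g(\theta_k, r_k).$$
Since $\omega_k$ is independent of $\mathcal{F}_k$ while $\theta_k, r_k$ are $\mathcal{F}_k$-measurable, $\{M_n\}$ is a martingale with bounded increments. From \eqref{eq:NRmap-n}, $r_n - r_0 = \eps M_n + \eps^2 D_n + \mathcal{O}(n\eps^{2+a})$, and in any window $n \le \eps^{-2(1-\gamma)\pm\delta}$ both the drift $\eps^2 D_n$ and the remainder are $o(\eps^\gamma)$, so the exit from the strip of width $\eps^\gamma$ is governed by $\eps M_n$.

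For the early-exit inequality, the event $\{n_\gamma < N\}$ is contained in $\{\max_{m \le N}|\eps M_m| \ge \eps^\gamma/4\}$ once the drift is absorbed. Azuma--Hoeffding applied to the stopped martingale (or Doob's maximal inequality together with Azuma's bound) yields
$$\Prob\Bigl(\max_{m\le N}|M_m| \ge c\,\eps^{\gamma-1}\Bigr) \le 2\exp\!\left(-\frac{c'\,\eps^{2\gamma-2}}{N}\right),$$
and substituting $N = \eps^{-2(1-\gamma)+\delta}$ makes the exponent equal $-c'\eps^{-\delta}$.

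For the late-exit inequality I would carry out a multi-block decomposition. Partition $[0, N]$ with $N = \eps^{-2(1-\gamma)-\delta}$ into $K \sim \eps^{-\delta}$ disjoint sub-blocks of length $L \sim \eps^{-2(1-\gamma)}$, choosing the parameters so that $L$ falls within the applicability range $L \le \eps^{-(\nu+b+2\tau)}$ of Lemma \ref{lemmasigma2} (this is the reason for the narrow window $\gamma \in (4/5, 4/5+1/40)$ and the constraint on $\rho$ stated just before the lemma). The target is a uniform lower bound
$$\Prob\bigl(n_\gamma \le (j+1)L \,\big|\, \mathcal{F}_{jL},\, n_\gamma > jL\bigr) \ge p_0 > 0,$$
from which iteration gives $\Prob(n_\gamma > N) \le (1-p_0)^K \le \exp(-p_0\eps^{-\delta})$.

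The key single-block estimate is obtained by controlling the second and fourth conditional moments of the sub-block increment $\Delta_j := M_{(j+1)L} - M_{jL}$. Conditional on $\mathcal{F}_{jL}$ and on non-exit, $|r_k - r_{jL}| \le \eps^\gamma$ throughout the block, and the normal form gives $\theta_k = \theta_{jL} + (k-jL)r_{jL} + \mathcal{O}(\eps(k-jL))$, so Lipschitz continuity of $v$ together with Lemma \ref{lemmasigma2} applied to $v^2$ at $r^* = r_{jL}$ (totally irrational) yields a sub-block predictable quadratic variation $\sum g(\theta_k,r_k)^2 = L\sigma^2(r_{jL})(1+o(1)) \ge \tfrac12 L\sigma_0^2$, where $\sigma_0^2 > 0$ by Hypothesis \textbf{H1}. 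Combined with the fourth-moment bound $\E[\Delta_j^4\mid\mathcal{F}_{jL}] = \mathcal{O}(L^2)$ coming from boundedness of $g$, the Paley--Zygmund inequality applied to $\Delta_j^2$ gives $\Prob(|\Delta_j| \ge c\sqrt{L}\,\sigma_0 \mid \mathcal{F}_{jL}) \ge p_0$, and $\eps\sqrt{L} \gtrsim \eps^\gamma$ implies that such a displacement exits the strip. The principal obstacle is precisely this step: simultaneously ensuring (i) that the parameter inequalities $2(1-\gamma) \le \nu+b+2\tau$ are compatible with the hypotheses on $\gamma, \nu, b, \tau$ used throughout the paper, and (ii) that the quadratic variation along the actual perturbed trajectory stays close, uniformly in $j$ and in $\eps$, to the one along the linear orbit $\theta_{jL} + (k-jL)r_{jL}$ to which Lemma \ref{lemmasigma2} directly applies.
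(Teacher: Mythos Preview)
Your overall architecture matches the paper's: both proofs split the late-exit estimate into $K\sim\eps^{-\delta}$ blocks of length $L\sim\eps^{-2(1-\gamma)}$ and show that each block has a uniform positive probability of producing exit, then iterate. The difference lies in how that single-block escape probability is obtained. The paper freezes the base point, linearizes $\theta_{n_i+k}\approx\theta_{n_i}+kr_{n_i}$, and invokes the central limit lemma (Lemma~\ref{main lemma}) to assert that $\xi=(\wt n_\gamma)^{-1/2}\sum\omega_k v$ is approximately $\mathcal N(0,\sigma^2)$, whence $\Prob(|\xi|\le 2)\le\rho<1$. Your Paley--Zygmund route via second and fourth conditional moments of $\Delta_j$ is a genuine alternative and is in some ways more robust: it is non-asymptotic and sidesteps the (slightly informal) passage in the paper from ``converges in distribution'' to a uniform-in-$\eps$ bound on $\Prob(|\xi|\le 2)$. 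For the early-exit inequality your Azuma--Hoeffding bound on the martingale maximum is again cleaner than the paper's argument, which uses the CLT for each fixed $n$ together with a union bound over $n\in[\eps^{-1-\gamma}/2,\eps^{-2(1-\gamma)+\delta})$.

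Two small corrections are worth flagging. First, your parenthetical about Lemma~\ref{lemmasigma2} is not quite right: that lemma produces a \emph{specific} $N\le\eps^{-(\nu+b+2\tau)}$ (depending on $r^*$) for which the ergodic sum is accurate, not a range of admissible $N$'s. In the regime $\gamma\in(4/5,4/5+1/40)$ one actually has $2(1-\gamma)>\nu+b+2\tau$ for the paper's parameter choices, so $L\gg N$; the paper handles this by the remark ``it is enough to split the sum into several sums'', i.e.\ one partitions the block of length $L$ into $\lfloor L/N\rfloor$ sub-blocks and applies the lemma to each. Your quadratic-variation lower bound needs the same sub-splitting. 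Second, the linearization error from \eqref{eq:NRmap-n} is $\mathcal O(\eps(k-jL)^2)$, not $\mathcal O(\eps(k-jL))$; this is still $o(1)$ since $\eps L^2=\eps^{4\gamma-3}\to 0$ for $\gamma>4/5$, so your argument survives, but the bound you wrote is too optimistic. The ``principal obstacle'' you identify---controlling the quadratic variation along the true orbit uniformly---is exactly what the paper addresses by this linearization plus Lemma~\ref{lemmasigma2} plus Hypothesis~\textbf{[H1]}, so it is not a gap so much as the place where both proofs do the real work.
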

\begin{proof}
We first prove the second statement. Let $\wt n_\ga=[\eps^{-2(1-\ga)}],$ 
$n_\delta=[\eps^{-\delta}]$, and
$n_i=in_\ga$. Then,
\begin{equation}\label{probN-D3}
\begin{split}
\Prob\left\{n_\ga>\eps^{-2(1-\ga)-\delta}\right\}\leq&\,\Prob\left\{|r_{n_{i+1}}
-r_ { n_i } |\leq
\eps^\ga\,\textrm{ for all }i=0,\dots, n_\delta-1\right\}\\
 \leq&\prod_{i=0}^{n_\delta}\Prob\left\{|r_{n_{i+1}}-r_{n_i}|\leq
\eps^{\ga}\right\}.
 \end{split}
 \end{equation}
We have that
\[r_{n_{i+1}}=r_{n_i}+\eps
\sum_{k=0}^{\wt n_\ga-1}\omega_k
v(\theta_{n_i+k},r_{n_i+k})+\mathcal{O}(\wt n_\ga\eps^2).\]
Taking also into account that 
$\theta_{n_i+k}=\theta_{n_i}+kr_{n_i}+\mathcal{O}
(\wt n_\ga^2\eps)$ for $0\leq k\leq \wt n_\ga$, we can write
 \begin{equation}\label{eqHni}
r_{n_{i+1}}=r_{n_i}+\eps
\sum_{k=0}^{\wt n_\ga-1}\omega_k v\left(\theta_{n_i}+k 
r_{n_i},r_{n_i}\right)+\mathcal{O}(\wt n_\ga^3\eps^2). 
 \end{equation}
Define
\begin{equation}\label{def:xi}
\xi=\frac{1}{\sqrt{\wt n_\ga}}\sum_{k=0}^{\wt n_\ga-1}\omega_k
v(\theta_{n_i}+kr_{n_i},r_{n_i}).
\end{equation}
For $\wt n_\ga$ sufficiently large (i.e., for $\eps$ sufficiently small), one
has that $\xi$ converges in distribution to a normal random variable
$\mathcal{N}(0,\sigma^2(\theta_{n_i},r_{n_i}))$ with
\[
\sigma^2(\theta_{n_i},r_{n_i})=\frac{1}{\wt
n_\ga}\sum_{k=0}^{\wt n_\ga-1}v^2(\theta_{
n_i}+kr_{n_i},r_{n_i}).\]
Then it is enough to use Lemma \ref{lemmasigma2} (if $\wt n_\ga\geq
\eps^{-(\nu+b+2\tau)}$, it is enough to split the sum into several sums) and
use Hypothesis {\bf [H1]} to ensure that
$\sigma^2(\theta_{n_i},r_{n_i})\geq K>0$ for some constant $K$. Then
\eqref{eqHni} yields
\[
r_{n_{i+1}}-r_{n_i}=\eps \wt n_\ga^{1/2}\xi+\mathcal{O}
(\wt n_\ga^3\eps^2)\]
Then, using that $\ga\in (4/5,4/5+1/40)$,
\[
\Prob\{|r_{n_{i+1}}-r_{n_i}|\leq
\eps^{\ga}\}=\Prob\{|\xi+\mathcal{O}(\eps^{5\ga-4})|\leq
1\}\leq\Prob\{|\xi|\leq 2\}.\]
Since $\xi$ converges in
distribution to $\mathcal{N}(0,\sigma^2(\theta_{n_i},r_{n_i}))$ and
$\sigma^2(\theta_{n_i},r_{n_i})\geq K>0$, one has
 \[\Prob\{|r_{n_{i+1}}-r_{n_i}|\leq \eps^{\ga}\}\leq\rho,\]
 for some $0<\rho<1$. Using this in \eqref{probN-D3} one obtains the claim of
the lemma with $C=-\log\rho>0$.

For the first statement, note that $\Prob\{n_\ga<\eps^{-1-\ga}\}=0$ since
$|r_{k+1}-r_k|\leq 2\eps$ and therefore  one needs at least 
$\lceil\eps^{-1-\ga}/2\rceil$ iterations. Thus, we only need to analyze 
$\Prob\{\eps^{-1-\ga}/2\leq n_\ga<\eps^{-2(1-\ga)+\delta}\}$, which is 
equivalent to 
\[
 \Prob\{\exists\, n\in [\eps^{-1-\ga}/2,\eps^{-2(1-\ga)+\delta}): 
|r_n-r_0|\geq \eps^\ga\}.
\]
Proceeding as before, for $\eps>0$ small enough,
\[
 \begin{split}
 \Prob\left\{ |r_{n}-r_0|\geq \eps^\ga\right\}&\leq 
\Prob\left\{\left|\eps\sum_{k=0}^{n-1}\omega_kv(\theta_0+r_0k,
r_0)+\OO\left(\eps^2n^3\right)\right|\geq\eps^\ga\right\} \\
&\leq \Prob\left\{\left|\xi+\OO(\eps 
n^{5/2})\right|\geq\eps^{\ga-1}n^{-1/2}\right\}
 \end{split}
\]
where $\xi$ is the function defined in \eqref{def:xi} with $n_i=0$. Now, using 
that $\ga\in (4/5,4/5+1/40)$ and $n\in
[\eps^{-1-\ga}/2,\eps^{-2(1-\ga)+\delta})$ 
we have that 
\[
\Prob\left\{\left|\xi+\OO\left(\eps 
n^{5/2}\right)\right|\geq\eps^{\ga-1}n^{-1/2}\right\}\leq 
\Prob\left\{|\xi|\geq\frac{\eps^{-\de/2}}{2}\right\}
\]
By Lemma \ref{main lemma} and hypothesis \textbf{H1},  $\xi$ 
converges to a normal random  variable with $\sigma^2>0$ 
(with lower bound independent of $\eps$) as 
$\eps\rightarrow 0$. Thus,  
\[
 \Prob\left\{ |r_{n}-r_0|\geq \eps^\ga\right\} \leq e^{-\frac{C}{\eps^\de}}
\]
for some $C>0$ independent of $\eps$. Then, since $\sharp
[\eps^{-1-\ga}/2,\eps^{-2(1-\ga)+\delta})\sim \eps^{-2(1-\ga)+\delta}$, 
\[
 \Prob\{\exists\, n\in [\eps^{-1-\ga}/2,\eps^{-2(1-\ga)+\delta}): 
|r_n-r_0|\geq \eps^\ga\}\leq e^{-\frac{C}{\eps^\de}},
\]
taking a smaller $C>0$.
\end{proof}

Now we state the main lemma of this section which shows the convergence of the 
random map to a diffusion process in the strip $I_{\ga}$. To this end,
we define the  functions $b$  and $\sigma$ as in \eqref{eq:drift-variance}. 
%\begin{equation}\label{def:DriftEstimates}
%b(r)=\int_0^1E_2(\theta,r)d\theta,\qquad 
%\sigma^2(r)=\int_0^{1}v^2(\theta,r)d\theta.
%\end{equation} where $E_2(\theta,r)$ is the function defined
%in   (\ref{eq:drift}). 
%and $N$ the constant given by Lemma 
% \ref{lemmasigma2}. Note that, since $\ga\in (4/5,1)$, the constant $N$ can 
% be taken constant at each $I_{\beta,\ga}^i$. Then, we define
% \begin{align}
%  B(r,\theta)=\frac{1}{N}\sum_{k=0}^{N}E_2(\theta+kr,r)\label{def:AuxDrift}
%  S^2(r,\theta)=\frac{1}{N}\sum_{k=0}^{N}v^2(\theta+kr,r).
% \end{align}
% Lemma \ref{lemmasigma2} implies that 
% \[
% \left|B(r,\theta)-b(r)\right|\leq C\eps^{\tau+2\beta},\qquad 
% \left|S^2(r,\theta)-\sigma^2(r)\right|\leq C\eps^{\tau+2\beta}
% \]
% where

\begin{lem}\label{lemmaexpectation}
Let $\nu$,  $b=(\nu-\rho)/2$ and $\rho$ 
satisfy  \eqref{conditionbeta} and $\ga\in (4/5,4/5+1/40)$. Take 
$f:\mathbb{R}\rightarrow\mathbb{R}$ be any $\mathcal{C}^l$ function with $l\ge 
3$ and $\|f\|_{\CCC^3}\leq C$ for some constant $C>0$ independent of $\eps$.
Then there exists $\zeta>0$ such that
\begin{eqnarray*}
&&\E\Bigg(f(r_{n_\ga})- \eps^2
\sum_{k=0}^{n_\ga-1}\left(b(r_k)f'(r_k)+\frac{\sigma^2(r_k)}{2}
f''(r_k)\right) \Bigg)\\
&&\qquad \qquad \qquad \qquad \qquad  \qquad \qquad 
- f(r_0)=\mathcal{O}(\eps^{2\ga+\zeta}).
\end{eqnarray*}
%where for $E_2(\theta,r)$, defined in (\ref{eq:drift}), we have 
\end{lem}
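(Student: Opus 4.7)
The plan is to write the difference telescopically,
\[
f(r_{n_\ga}) - f(r_0) = \sum_{k=0}^{n_\ga - 1}\bigl[f(r_{k+1}) - f(r_k)\bigr],
\]
Taylor-expand each increment to second order in $(r_{k+1}-r_k) = \mathcal{O}(\eps)$, and exploit the filtration $\mathcal{F}_k = \sigma(\omega_0,\dots,\omega_{k-1})$, with respect to which $(\theta_k,r_k)$ is measurable. From \eqref{eq:NRmap-n},
\[
r_{k+1}-r_k = \eps\,\omega_k v(\theta_k,r_k) + \eps^2\omega_k v_2(\theta_k,r_k) + \eps^2 E_2(\theta_k,r_k) + \mathcal{O}_s(\eps^{2+a}).
\]
Using $\E[\omega_k\mid\mathcal{F}_k]=0$, $\E[\omega_k^2\mid\mathcal{F}_k]=1$, and $\|f\|_{\CCC^3}\le C$, the conditional expectation of one increment is
\[
\E[f(r_{k+1})-f(r_k)\mid\mathcal{F}_k] = \eps^2 f'(r_k) E_2(\theta_k,r_k) + \tfrac{\eps^2}{2} f''(r_k) v^2(\theta_k,r_k) + \mathcal{O}(\eps^{2+a}).
\]
Summing, taking total expectations, and using that $n_\ga$ is a bounded stopping time ($n_\ga\le s\eps^{-2}$), the lemma reduces to an estimate of the form
\[
\eps^2\,\E\!\left[\sum_{k=0}^{n_\ga-1}\!\Bigl(f'(r_k)[E_2(\theta_k,r_k)-b(r_k)] + \tfrac12 f''(r_k)[v^2(\theta_k,r_k)-\sigma^2(r_k)]\Bigr)\right] = \mathcal{O}(\eps^{2\ga+\zeta}).
\]

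The heart of the argument is to replace the pointwise samples $E_2(\theta_k,r_k)$ and $v^2(\theta_k,r_k)$ by their $\theta$-averages $b(r_k)$ and $\sigma^2(r_k)$, extracting quasi-randomness from Lemma \ref{lemmasigma2}. I would partition $[0,n_\ga)$ into consecutive blocks of length $N\sim \eps^{-(\nu+b+2\tau)}$. Inside the $j$-th block, $|r_k-r_{jN}|=\mathcal{O}(\eps N)$ since $|r_{k+1}-r_k|\le 2\eps$, and $\theta_k = \theta_{jN} + (k-jN)r_{jN} + \mathcal{O}(\eps N^2)$ from \eqref{eq:NRmap-n}. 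Using $\CCC^1$-regularity of $E_2$, $v^2$, $b$, $\sigma^2$ (the bounds on $\CCC^s$-norms from Theorem \ref{thm:normal-form} apply since the strip $I_\ga$ is totally irrational and lies inside $\mathcal{D}_\beta$), I would freeze $r_k\mapsto r_{jN}$, $\theta_k\mapsto \theta_{jN}+(k-jN)r_{jN}$ inside each block and then apply Lemma \ref{lemmasigma2} with $r^*=r_{jN}$: its hypothesis ($|q|>\eps^{-b}$ for nearby rationals) is exactly the TI-strip condition. Each block then contributes $\mathcal{O}(\eps^\tau)$ from ergodization plus $\mathcal{O}(\eps N^2) + \mathcal{O}(\eps N)\cdot N$ from the freezing. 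Summing over the $n_\ga/N$ blocks and multiplying by $\eps^2$, the accumulated replacement error becomes $\mathcal{O}(\eps^{2\ga+\zeta})$ for some $\zeta>0$ provided the exponents satisfy $\nu+b+2\tau<1/2$, which holds under $\nu=1/4$, $b=(\nu-\rho)/2$ with \eqref{conditionbeta}, and $\tau\in(0,1/40)$.

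The random bound on $n_\ga$ comes from Lemma \ref{lemma:TI:exittime}: up to an event of probability $e^{-C/\eps^\de}$, $n_\ga\in[\eps^{-2(1-\ga)-\de},\eps^{-2(1-\ga)+\de}]$, so $\eps^2 n_\ga \sim \eps^{2\ga}$. On the exceptional set the deterministic bounds $\|f\|_{\CCC^0}, \|b\|_{\CCC^0}, \|\sigma^2\|_{\CCC^0}\le K$ (Theorem \ref{thm:normal-form}) combined with $n_\ga\le s\eps^{-2}$ give a contribution dominated by $e^{-C/\eps^\de}$, which is subpolynomial and hence absorbed into the claimed error.

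The main obstacle is the delicate balancing of the block size $N$ and of the exponents $\nu$, $b$, $\tau$, $\ga$: $N$ must be large enough that ergodization in Lemma \ref{lemmasigma2} kicks in (which requires $N\gtrsim \eps^{-(\nu+b+2\tau)}$ and hence $r^*$ sufficiently irrational), but small enough that the frozen approximation of $(\theta_k,r_k)$ is accurate within a block ($\eps N^2 \ll 1$ and $\eps N \ll \eps^\ga$). Similarly, $\ga$ must be large enough that the cubic Taylor remainder $n_\ga\eps^3 = \eps^{3\ga+1-2}$ is $o(\eps^{2\ga})$, yet small enough that Lemma \ref{lemma:TI:exittime}'s scale $n_\ga\sim \eps^{-2(1-\ga)}$ beats each of the individual block-error accumulations; the narrow window $\ga\in(4/5,4/5+1/40)$ and the choices $\nu=1/4$, $\tau<1/40$, $\rho=R\nu$ from \eqref{constantR}–\eqref{conditionbeta} are precisely what make all these inequalities simultaneously hold with a strictly positive slack $\zeta$.
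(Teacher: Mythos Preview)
Your proposal is correct and follows essentially the same route as the paper: telescoping plus second-order Taylor expansion, the martingale observation $\E[\omega_k\mid\mathcal F_k]=0$ to kill the $\eps$-terms, conditioning on the exit-time window via Lemma~\ref{lemma:TI:exittime}, and then block-averaging with Lemma~\ref{lemmasigma2} to replace $E_2$ and $v^2$ by their $\theta$-averages. Two small slips to clean up: your interval for $n_\ga$ is written backwards (it should be $[\eps^{-2(1-\ga)+\de},\eps^{-2(1-\ga)-\de}]$), and the per-block $\theta$-freezing error is $\sum_{j<N}\mathcal O(\eps j^2)=\mathcal O(\eps N^3)$ rather than $\mathcal O(\eps N^2)$---the paper tracks this as the $\mathcal O(P_\ga N^3\eps^3)$ term, bounded crudely by $n_\ga^3\eps^3=\mathcal O(\eps^{6\ga-3-3\de})$, which is still $o(\eps^{2\ga})$ for $\ga>4/5$.
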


\begin{proof}
 Let us denote
\begin{equation}\label{defeta}
\eta=f(r_{n_\ga})-\eps^2
\sum_{k=0}^{n_\ga-1}\left(b(r_k)f'(r_k)+\frac{ \sigma^2(r_k)}{2}
f''(r_k)\right).
\end{equation}
% First of all we shall use the law of total expectation. 
% %Let us fix $0<\delta<\min\{(1-\beta)/2,1/6\}$. 
% Fix a small enough $\delta>0$. Then we have\footnote{\textcolor{red}{Here there was the following statement:}
% 
% By Lemma \ref{main lemma} for $\eps$ sufficiently small
% and $c>0$ independent of $\eps$ we have 
% \begin{equation}\label{Prob0}
% \Prob\{n_\beta<\eps^{-\textb{2}(1-\beta)+\delta}\}
% \le \exp\left( -\frac{c}{\eps^{2\delta}} \right).
% \end{equation}
% 
% \textcolor{red}{I put it at the end of the proof, where the same is stated for $n_\beta>\eps^{-2(1-\beta)-\delta}$. It makes more sense.}}:
Writing,
\[f(r_{n_\ga})=f(r_0)+
\sum_{k=0}^{n_\ga-1}\left(f(r_{k+1})-f(r_k)\right)\]
and doing the Taylor expansion in each term inside the sum we get
\begin{eqnarray*}
f(r_{n_\ga})&=&f(r_0)+\sum_{k=0}^{n_\ga-1}\Big[
f'(r_k)(r_{k+1}-r_k)\\
&&+\frac{1}{2}f''(r_k)(r_{k+1}-r_k)^2+\mathcal{O}(\eps^3)\Big].
\end{eqnarray*}
Substituting this in \eqref{defeta} we get
\begin{equation}\label{eta-version2}
 \begin{split}
 \eta=&f(r_0)+\sum_{k=0}^{n_\ga-1}
\Big[f'(r_k)(r_{k+1}-r_k)+
\frac{1}{2}f''(r_k)(r_{k+1}-r_k)^2\Big]\\ 
&-\eps^2\sum_{k=0}^{n_\ga-1}
\left[b(r_k)f'(r_k)+\frac{\sigma^2(r_k)}{2}
f''(r_k)\right ] +\sum_{k=0}^{n_\ga-1}\mathcal{O}\left(\eps^3\right).
\end{split}
\end{equation}
Using \eqref{eq:NRmap-n} we can write
\[ 
\begin{split}
r_{k+1}-r_k&=\eps\omega_k[v(\theta_k,r_k)+\eps 
v_2(\theta_k,r_k)]+\eps^2E_2(\theta_k,r_k)
+\mathcal{O}(\eps^{2+a})\\
(r_{k+1}-r_k)^2&=\eps^2v^2(\theta_k,r_k)+\mathcal{O}(\eps^3).
\end{split}
\]
Thus, \eqref{eta-version2} can be written as
\begin{equation}\label{eta-version3}
 \begin{split}
  \eta=&f(r_0)+\eps\sum_{k=0}^{n_\ga-1}
f'(r_k)\omega_k\left[v(\theta_k,r_k)
+\eps v_2(\theta_k,r_k)\right]\\
 &+\eps^2\sum_{k=0}^{n_\ga-1}
f'(r_k)\left[E_2(\theta_k,r_k)-b(r_k)\right]
\\
 &+\frac{\eps^2}{2}\sum_{k=0}^{n_\ga-1}f''(r_k)
\left[v^2(\theta_k,r_k)-\sigma^2(r_k)\right]\\
 &+\sum_{k=0}^{n_\ga-1}\mathcal{O}(\eps^{2+a}).
\end{split}
 \end{equation}
Note first that since $\omega_k$ is independent of 
$(\theta_k,r_k)$  and $\E(\omega_k)=0$, we have 
%\begin{equation}\label{def:ZeroExpFirstOrder}
\[
\begin{split}
\E(\omega_kf'(r_k)[v(\theta_k,r_k)+\eps v_2(\theta_k,r_k)])&=\\
\E(\omega_k)\E(f'(r_k)[v(\theta_k,r_k)+\eps v_2(\theta_k,r_k)])&=0.
\end{split}
%\end{equation}
\]
for all $k\in\mathbb{N}$. So, we do not need to analyze the term in the first 
row.

Using the law of total expectation and taking $\delta>0$ small enough, we 
split $\E(\eta)$ as 
\begin{equation}\label{def:TI:expectation:splittingexit}
\begin{split}
\E\left(\eta\right)=&\E\left(\eta\,|\,\eps^{-2(1-\ga)+\delta}\leq 
n_\ga\leq \eps^{-2(1-\ga)-\delta}\right)
\Prob\left\{\eps^{-2(1-\ga)+\delta}\leq n_\ga\leq 
\eps^{-2(1-\ga)-\delta}\right\}\\
+&\E\left(\eta\,|\,n_\ga< \eps^{-2(1-\ga)+\delta}\right)
\Prob\left\{n_\ga<\eps^{-2(1-\ga)+\delta}\right\}\\
+&\E\left(\eta\,|\,\eps^{-2(1-\ga)-\delta}<n_\ga\leq s\eps^{-2}\right)
\Prob\left\{\eps^{-2(1-\ga)-\delta}<n_\ga\leq s\eps^{-2}\right\}.
\end{split}
\end{equation}
% By Lemma \ref{main lemma} for $\eps$ sufficiently small
% and $c>0$ independent of $\eps$ we have 
% \begin{equation}\label{Prob0}
% \Prob\{n_\beta<\eps^{-\textb{2}(1-\beta)+\delta}\}
% \le \exp\left( -\frac{c}{\eps^{2\delta}} \right).
% \end{equation}
%Indeed, from \eqref{eq:NRmap-n} it is clear that:
%$$|r_n-r_0|\leq Kn\eps,$$
%for some constant $K$ independent of $\eps$. Thus, for all 
%$n\leq\eps^{-(1-\beta)+\delta}$ we have:
%$$|r_n-r_0|\leq K\eps^{\beta+\delta}<\frac{1}{2}\eps^{\beta},$$
%where in the last inequality we have assumed $\eps$ sufficiently small. This 
%means that for all $n\leq\eps^{-(1-\beta)+\delta}$ one has that 
%$(\theta_n,r_n)\in I_\beta$, with probability 1, and thus \eqref{Prob0} is 
%clear. Thus, in the following we assume $n_\beta\geq 
%[\eps^{-(1-\beta)+\delta}]$.
We treat first the second and third rows. Taking into
account that 
\[
 \left|\E\left(\eta-f(r_0)\,|\,n_\ga< \eps^{-2(1-\ga)+\delta}\right)\right|
\leq K\eps^2n_\ga\leq
K\eps^{2\ga+\de}
\]
% We know that for any $n_\ga\in\mathbb{N}$,
% \[
%  \left|\eta-f(r_0)\right|\leq 
% K\eps^2\sum_{k=0}^{n_\ga-1}e^{-\lambda \eps^2 k}\leq K
% \]
% where $K$ is a constant independent of $\eps$. 
and using the first statement
of Lemma
\ref{lemma:TI:exittime}, we obtain the bound needed for the second row of
\eqref{def:TI:expectation:splittingexit}. For the third row, it is enough to 
use the second statement of Lemma \ref{lemma:TI:exittime} and 
\[
\left|\E\left(\eta-f(r_0)\,|\,
n_\ga\in\left(
\eps^{-2(1-\ga)-\delta},s\eps^{-2}\right]\right)\right|\leq K\eps^2n_\ga\leq
Ks.
\]
% Joining these two estimates, one obtains the needed bound for the third row of
% \eqref{def:TI:expectation:splittingexit}.
% \[
%  \left|\E\left(\eta-f(r_0)\,|\,n_\ga< \eps^{-2(1-\ga)+\delta}\right)\right|\leq 
% K\eps^2\sum_{k=0}^{n_\ga-1}e^{\lambda \eps^2 k}\leq K
% \]
% and analogously conditioning by $n_\ga> \eps^{-2(1-\ga)-\delta}$. Using Lemma
% \ref{lemma:TI:exittime}, we obtain the bounds needed for the second 
% and third row of \eqref{def:TI:expectation:splittingexit}.

For the first row in \eqref{def:TI:expectation:splittingexit}, we need more 
accurate estimates. We need upperbounds for
\begin{equation}\label{def:As}
\begin{split}
A_1&=\eps^2\sum_{k=0}^{n_\ga-1}
f'(r_k)\left[E_2(\theta_k,r_k)-b(r_k)\right], \\
A_2&=\frac{\eps^2}{2}\sum_{k=0}^{n_\ga-1}f''(r_k)
\left[v^2(\theta_k,r_k)-\sigma^2(r_k)\right], \\
A_3&=\sum_{k=0}^{n_\ga-1}\mathcal{O}\left(\eps^{2+a}\right).
\end{split}
\end{equation}
with $\eps^{-2(1-\ga)+\delta}\leq 
n_\ga\leq \eps^{-2(1-\ga)-\delta}$.

% Now we distinguish between the case $\eps^{-2(1-\beta)+\delta}\leq n_\beta\leq 
% \eps^{-2(1-\beta)-\delta}$ and the  case 
% \textcolor{red}{$n_\beta<\eps^{-2(1-\beta)+\delta}$ or} $n_\beta> 
% \eps^{-2(1-\beta)-\delta}$. Consider the former case. 
For the last term  $A_3$, it is enough to use 
\begin{equation}\label{errortermsum1-small}
|A_3|\leq \left|\sum_{k=0}^{n_\ga-1}\mathcal{O}(\eps^{2+a})
\right|\leq K\eps^{2+a}n_\ga\leq K\eps^{2\ga+d}, 
\end{equation}
where $d=a-\delta>0$ due to smallness of $\delta$ and $K$ is independent of 
$\eps$.

The terms $A_1$ and $A_2$ are bounded analogously. We show how to bound the 
first one. Consider the constant $N$ given by Lemma \ref{lemmasigma2}.  Then, 
we write $n_\ga$ as $n_\ga=P_\ga N+Q_\ga$ for some $P_\ga$ and $0\leq 
Q_\ga<N$ and $A_1$ as $A_1=A_{11}+A_{12}$ with 
\[
 \begin{split}
A_{11}&= \eps^2\sum_{k=0}^{P_\ga-1}
\sum_{j=0}^{N-1}f'(r_{kN+j})\left[E_2(\theta_{kN+j},
r_{kN+j})-b(r_{kN+j})\right ], \\
A_{12}&=\eps^2\sum_{j=0}^{Q_\ga-1}
f'(r_{P_\ga N+j})\left[E_2(\theta_{P_\ga N+j},r_{P_\ga
N+j})-b(r_{P_\ga N+j})\right].
\end{split}
\]
The term $A_{12}$ can be bounded as $|A_{12}|\leq K\eps^{2}Q_\ga$. Now, by Lemma
\ref{lemmasigma2}, $Q_\ga<N\leq \eps^{-(\nu+b+2\tau)}$, which implies
\[
 |A_{12}|\leq K\eps^{2-\nu-b-2\tau}\leq K
\eps^{2\ga+\tau}\eps^{2(1-\ga)-\nu-b-3\tau}.
\]
Thus, it only suffices to check that $2(1-\ga)-\nu-3\tau\geq 0$. Using that
$\nu=1/4$, $\ga\in (4/5,4/5+1/40)$ and \eqref{conditionbeta}, we have
\[
 2(1-\ga)-\nu-b\geq \frac{1}{160}
\]
Therefore, taking $\tau\in (0,10^{-4})$, we have $2(1-\ga)-\nu-3\tau\geq 0$.

For the term $A_{11}$ we use \eqref{eq:NRmap-n} to obtain 
\[
\begin{split}
 A_{11}&= \eps^2\sum_{k=0}^{P_\ga-1}
\sum_{j=0}^{N-1}f'(r_{kN})\left[E_2(\theta_{kN}+jr_{kN},
r_{kN})-b(r_{kN})\right]+\OO(P_\ga N^3\eps^3) \\
&= \eps^2\sum_{k=0}^{P_\ga-1}
f'(r_{kN})\sum_{j=0}^{N-1}\left[E_2(\theta_{kN}+jr_{kN},
r_{kN})-b(r_{kN})\right]+\OO(P_\ga N^3\eps^3).
\end{split}
\]
Now, using Lemma \ref{lemmasigma2}, we have 
\[
 |A_{11}|\leq K\left(\eps^{2+\tau}P_\ga+P_\ga N^3\eps^3\right),
\]
for some constant $K>0$ independent of $\eps$. Using that 
$P_\ga$ and $N$ satisfy 
$$
P_\ga N\leq n_\ga\leq \eps^{-2(1-\ga)-\de}
$$ and $\ga\in (4/5,4/5+1/40)$, we have that 
\begin{equation}
|A_{11}|\leq K\left(\eps^{2\ga+\tau-\de}+\eps^{6\ga-3-3\de}\right)\leq
K\left(\eps^{2\ga+\tau-\de}+\eps^{2\ga+1/5-3\de}\right).
\end{equation}

Proceeding analogousy, one can bound $A_2$. Thus, it is enough to take
$\de<\tau,$  $\de<\frac{1}{15}$ and 
\[
 \zeta=\min\left\{\tau-\de,\frac{1}{5}-3\de ,a-\delta\right\}
\]
to obtain that, for $n\in (\eps^{-2(1-\ga)+\de},
\eps^{-2(1-\ga)-\de})$,
% 
% 
% Now we use item 2 of Lemma \ref{lemsumaverages} in \eqref{eta-version3} twice. 
% First we take $h_k(r)=e^{-\lambda\eps^2k}f'(r)$ and $g(\theta,r)=E_2(\theta,r)$, 
% and after we take $h_k(r)=e^{-\lambda\eps^2k}f''(r)$ and 
% $g(\theta,r)=v^2(\theta,r)$. Then, recalling also \eqref{errortermsum1-small}, 
% equation \eqref{eta-version3} for $\eps^{-2(1-\beta)+\delta}\leq n_\beta\leq 
% \eps^{-2(1-\beta)-\delta}$ yields:
%\begin{equation}\label{eta-version4-small}
\[
\eta=f(r_0)+\eps\sum_{k=0}^{n_\ga-1}f'(r_k)\omega_k\left[
v(\theta_k,r_k)+\eps v_2(\theta_k,r_k)\right]+ \mathcal{O}(\eps^{2\ga+\zeta}).
\]
%\end{equation} 
and therefore 
\[
\E\left(\eta\,|\,\eps^{-2(1-\ga)+\delta}\leq 
n_\ga\leq \eps^{-2(1-\ga)-\delta}\right) \times 
\]
\[
\Prob\{\eps^{-2(1-\ga)+\delta}\leq n_\ga\leq 
\eps^{-2(1-\ga)-\delta}\}=f(r_0)+ \mathcal{O}(\eps^{2\ga+\zeta}).
\]
This completes the proof of the lemma.
\end{proof}

\subsection{The IR case}\label{sec:IR-case}

The ideas to deal with Imaginary Rational strips are essentially the same as 
in the Totally Irrational case. Recall that  after performing the change
to normal form (Theorem \ref{thm:normal-form}), we are dealing with
\eqref{eq:NRmap-n}.
% the $n$-th iteration of our map can be written as:
% \begin{equation}
% \begin{array}{rcl}\label{eq:NRmap-n-IR}
% \theta_n&=&\displaystyle\theta_0+nr_0+\mathcal{O}(n\eps),\\
% r_n&=&\displaystyle r_0+\eps\sum_{k=0}^{n-1}\omega_k[v(\theta_k,r_k)+\eps v_2(\theta_k,r_k)]\\
% &&+\eps^2\sum_{k=0}^{n-1}E_2(\theta_k,r_k)+\mathcal{O}(n\eps^{{2+a}}),
% \end{array}
% \end{equation}
% where $v_2(\theta,r)$ is a given function which can be written explicitly in terms of $v(\theta,r)$ and $S_1(\theta,r)$.
We also recall that given an imaginary rational strip $I_\ga$ there exists a
unique $r^*\in I_\ga$, with $r^*=p/q$ and $|q|<\eps^{-b}$, in its
$\eps^\nu$--neighborhood.
% 
% 
% We define 
% \begin{equation}\label{defg0-IR}
% g_0(\theta,r)=\frac{1}{q}\sum_{i=0}^{q-1}g(\theta+ir,r).
% \end{equation}
% Proceeding as in the Totally Irrational case, we divide the totally irrational
% strip $I_\beta$ into $\kk$ narrower strips $I^i_{\beta,\ga}$ of width $\eps^\ga$
% such that
% \[
%  I_\beta\subset\bigcup_{i=1}^\kk I^i_{\beta,\ga},
% \]
% with  $\ga\in (4/5,4/5+1/40)$. 

Fix an Imaginary Rational strip $I_\ga$ and 
Let $(\theta_0,r_0)\in I_\ga$. Recall that 
$n_\ga\leq s\eps^{-2}$ is either 
the exit time from $I_\ga$, that is the first number such that
$(\theta_{n_\ga+1},r_{n_\ga+1})\not\in I_\ga$ or the final time $n_\ga=n$. One 
has estimates for the exit time analogous to the ones in  Lemma 
\ref{lemma:TI:exittime}.

\begin{lem}\label{lemma:IR:exittime}

Fix $\ga\in (4/5,4/5+1/40)$. Then, there
exists a constant  $C>0$ such that, 
\begin{itemize}
\item For any $\de\in (0, 2(1-\ga))$ and $\eps>0$ small
enough,
\[
\Prob\{n_\ga<\eps^{-2(1-\ga)+\delta}\}\leq
e^{-{C\eps^{-\delta}}}.
\]
\item For any   $\de>0$ and $\eps>0$ small
enough,
\[
\Prob\{\eps^{-2(1-\ga)-\delta}<n_\ga<s\eps^{-2}\}\leq
e^{-{C\eps^{-\delta}}}.
\]
\end{itemize}
\end{lem}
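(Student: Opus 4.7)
The plan is to follow the structure of the proof of Lemma \ref{lemma:TI:exittime} essentially verbatim, replacing its Diophantine averaging input (Lemma \ref{lemmasigma2}) with an averaging estimate adapted to the IR geometry. The key arithmetic fact is that the unique nearby rational $r^\ast=p/q$ in $I_\ga$ satisfies $2d<|q|<\eps^{-b}$, so that every nonzero Fourier mode $m$ of $v$ satisfies $0<|m|\leq d<q$, and hence $e^{2\pi i m p/q}\neq 1$. Thus rotation by $r^\ast$, while nearly periodic with period $q$, is still quantitatively ergodic on trigonometric polynomials of degree $\leq 2d$.

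Concretely, I would first establish an IR analogue of Lemma \ref{lemmasigma2}: for any trigonometric polynomial $g(\theta,r)$ of degree $\leq 2d$ in $\theta$, any $\theta^\ast\in\mathbb T$, any $r\in I_\ga$ with $|r-p/q|\leq \eps^\nu$ and $2d<|q|<\eps^{-b}$, and any $N\geq \wt n_\ga:=[\eps^{-2(1-\ga)}]$,
\[
\left|\frac{1}{N}\sum_{k=0}^{N-1} g(\theta^\ast+kr,r)-\int_0^1 g(\theta,r)\,d\theta\right|=\mathcal{O}(\eps^{2(1-\ga)-b}).
\]
This is proved by Fourier expansion: each nonzero mode contributes a geometric sum $\frac{1}{N}\cdot\frac{e^{2\pi i m N r}-1}{e^{2\pi i m r}-1}$, whose denominator satisfies $|e^{2\pi i m p/q}-1|\geq 2\sin(\pi/q)\geq K/q\geq K\eps^b$, and this lower bound survives the Lipschitz perturbation $|r-p/q|\leq\eps^\nu$ because $\nu>b$. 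Since $\ga\in(4/5,4/5+1/40)$ and $b<1/8$, the resulting exponent $2(1-\ga)-b$ is strictly positive.

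Applying this estimate to $g=v^2$ and invoking Hypothesis {\bf [H1]}, the empirical variance
\[
\sigma^2(\theta_{n_i},r_{n_i})=\frac{1}{\wt n_\ga}\sum_{k=0}^{\wt n_\ga-1}v^2\bigl(\theta_{n_i}+kr_{n_i},r_{n_i}\bigr)
\]
is bounded below by a positive constant, uniformly in the IR strip, for all $\eps$ small. The rest of the argument is then a verbatim repetition of the TI proof: set $n_i=i\wt n_\ga$ and
\[
\xi=\frac{1}{\sqrt{\wt n_\ga}}\sum_{k=0}^{\wt n_\ga-1}\omega_k v(\theta_{n_i}+kr_{n_i},r_{n_i});
\]
conditionally on $(\theta_{n_i},r_{n_i})$, by the quantitative CLT of Lemma \ref{main lemma}, $\xi$ is close in distribution to $\mathcal{N}(0,\sigma^2)$ with $\sigma^2\geq K>0$; hence $\Prob\{|r_{n_{i+1}}-r_{n_i}|\leq\eps^\ga\}\leq\rho<1$, and chaining $n_\de=[\eps^{-\de}]$ independent blocks yields the second statement. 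The first statement follows from the corresponding Gaussian tail estimate applied to a single sum over $n\in[\eps^{-1-\ga}/2,\eps^{-2(1-\ga)+\de})$, exactly as in the TI argument.

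The main obstacle is the IR ergodization step above: one must combine the strict inequality $|m|\leq d<q/2$ (which comes from $|q|>2d$) with the perturbation bound $|r-p/q|\leq \eps^\nu$ and the ordering $\nu>b$ to extract a quantitatively nondegenerate trigonometric sum on $N\approx \eps^{-2(1-\ga)}$ iterates. Once this replacement for Lemma \ref{lemmasigma2} is in hand, the probabilistic chaining of Lemma \ref{lemma:TI:exittime} transfers without modification.
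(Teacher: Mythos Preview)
Your proposal is correct and follows the paper's structure closely: both proofs carry over the chaining argument from Lemma \ref{lemma:TI:exittime} verbatim, with the only new input being a uniform lower bound on the empirical variance $\sigma^2(\theta_{n_i},r_{n_i})$. The difference lies in how this bound is obtained. You establish an explicit quantitative ergodization estimate for trigonometric polynomials of degree $\le 2d$ along the rotation by $r_{n_i}$, using that $|q|>2d$ forces $mp/q\notin\mathbb Z$ for every relevant mode and hence $|e^{2\pi i m r}-1|\gtrsim \eps^{b}$; this gives convergence of the time average to $\int v^2$ with rate $\eps^{2(1-\ga)-b}$, and then {\bf [H1]} yields positivity. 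The paper instead replaces the second argument of $v^2$ by $p/q$ (an $\mathcal O(\eps^\nu)$ error), observes that the resulting finite average is itself a nonnegative trigonometric polynomial of degree $\le 2d$ in $\theta_{n_i}$, and argues that a trigonometric polynomial of degree $2d$ has at most $4d$ zeros, so for small $\eps$ the variance is uniformly bounded below. Your Fourier argument is more transparent and yields an explicit rate; the paper's zero-counting argument is terser but less self-contained (in particular, boundedness of the zero set does not by itself give a uniform lower bound without an additional observation about the average value). Either route closes the gap.
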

\begin{proof}
We prove the second statement. The first one can be proved following the same 
lines as in Lemma \ref{lemma:TI:exittime} and the modifications that we use to 
prove the second statement. As in Lemma \ref{lemma:TI:exittime}, we 
define $\wt n_\ga=[\eps^{-2(1-\ga)}]$,
$n_\delta=[\eps^{-\delta}]$, and
$n_i=in_\ga$ and we use 
\[
\begin{split}
\Prob\left\{n_\ga>\eps^{-2(1-\ga)-\delta}\right\}\leq&\,\Prob\left\{|r_{n_{i+1}}
-r_ { n_i } |\leq
\eps^\ga\,\textrm{ for all }i=0,\dots, n_\delta-1\right\}\\
 \leq&\prod_{i=0}^{n_\delta}\Prob\left\{|r_{n_{i+1}}-r_{n_i}|\leq
\eps^{\ga}\right\}.
 \end{split}
\]
We have 
% \[r_{n_{i+1}}=r_{n_i}+\eps
% \sum_{k=0}^{\wt n_\ga-1}\omega_k
% v(\theta_{n_i+k},r_{n_i+k})+\mathcal{O}(\wt n_\ga\eps^2).\]
% whi 
\begin{equation}\label{eqHni:IR}
r_{n_{i+1}}=r_{n_i}+\eps
\sum_{k=0}^{\wt n_\ga-1}\omega_k v\left(\theta_{n_i}+k 
r_{n_i},r_{n_i}\right)+\mathcal{O}(\wt n_\ga^3\eps^2). 
 \end{equation}
Considering $\xi$ defined in \eqref{def:xi}, we want to show that as $\wt 
n_\ga\to\infty$, it converges  in distribution to a normal 
random variable
$\mathcal{N}(0,\sigma^2(\theta_{n_i},r_{n_i}))$ with positive variance. 
Using Lemma \ref{main lemma}, we need a lower bound for 
\[
\sigma^2(\theta_{n_i},r_{n_i})=\lim_{\wt n_\ga\to\infty}
\frac{1}{\wt
n_\ga}\sum_{k=0}^{\wt n_\ga-1}v^2(\theta_{
n_i}+kr_{n_i},r_{n_i}).\]
Taking into account that there exists a rational $r=p/q$ with $d<q<\eps^{-b}$
in 
a $\eps^\nu$-neighborhood of  the imaginary rational strip 
$I_j$, we have
\[
\sigma^2(\theta_{n_i},r_{n_i})=\lim_{\wt n_\ga\to\infty}
\frac{1}{\wt
n_\ga}\sum_{k=0}^{\wt n_\ga-1}v^2(\theta_{
n_i}+kr_{n_i},p/q)+\OO\left(\eps^\nu\right).\]
The right hand side is a trigonometric polynomial of degree $2d$ in
$\theta_{n_i}$ and therefore it can have at most $4d$ zeros. Therefore, taking
$\eps$ small enough, we have that 
$\sigma^2(\theta_{n_i},r_{n_i})\geq K>0$ for some constant $K$. 
Then, the rest of the proof follows the same lines as in Lemma 
\ref{lemma:TI:exittime}.

 \end{proof}

\begin{lem}\label{lemmaexpectation-IR}
Let $\nu$,  $b=(\nu-\rho)/2$ and $\rho$ 
satisfy  \eqref{conditionbeta} and $\ga\in (4/5,4/5+1/40)$. Fix $\de>0$ 
small. Take 
$f:\mathbb{R}\rightarrow\mathbb{R}$ be any $\mathcal{C}^l$ function with $l\ge 
3$ and $\|f\|_{\CCC^3}\leq C$ for some constant $C>0$ independent of $\eps$. 
Then,  
\begin{eqnarray*}
 &&\E\left(f(r_{n_\ga})-\eps^2
\sum_{k=0}^{n_\ga-1}\left(b(r_k)f'(r_k)+\frac{\sigma^2(r_k)}{2
}
f''(r_k)\right)\right)\\
&&-f(r_0)=\mathcal{O}(\eps^{2\ga-\de}),
\end{eqnarray*}
where $b$ and $\sigma$ are the functions introduced in 
\eqref{eq:drift-variance}.
% \begin{equation}\label{def:DriftIR}
% b(\theta,r)=\frac{1}{q}\sum_{i=0}^{q-1}E_2(\theta+ir,r),\qquad
% \sigma^2(\theta,r)=\frac{1}{q}\sum_{i=0}^{q-1}v^2(\theta+ir,r).
% \end{equation}
\end{lem}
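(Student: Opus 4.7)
The strategy parallels the proof of Lemma \ref{lemmaexpectation}: define $\eta$ as in \eqref{defeta}, expand $f(r_{k+1})-f(r_k)$ to second order using the recursion \eqref{eq:NRmap-n}, and reach the decomposition \eqref{eta-version3} into $f(r_0)$, a linear-in-$\omega$ martingale part, and the three correction terms $A_1$, $A_2$, $A_3$ from \eqref{def:As}. The key simplification in the IR case is that the weaker target $\OO(\eps^{2\ga-\de})$ essentially saturates the trivial size bound $\eps^2 n_\ga$ on the main range of exit times, so no Birkhoff-type ergodization is required.

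For the martingale part, observe that $\{n_\ga>k\}$ is determined by $r_1,\dots,r_k$, hence by $\omega_0,\dots,\omega_{k-1}$, so $n_\ga$ is a bounded stopping time for the filtration $\mathcal{F}_k=\sigma(\omega_0,\dots,\omega_{k-1})$. Since $\omega_k$ is independent of $\mathcal{F}_k$ with zero mean, the discrete stochastic integral
\[
M_n=\eps\sum_{k=0}^{n-1}\omega_k\, f'(r_k)\bigl[v(\theta_k,r_k)+\eps v_2(\theta_k,r_k)\bigr]
\]
is a bounded martingale, and the Optional Stopping Theorem yields $\E[M_{n_\ga}]=0$. Hence $\E[\eta]-f(r_0)=\E[A_1+A_2+A_3]$.

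Next, decompose by exit time via the law of total expectation as in \eqref{def:TI:expectation:splittingexit} into the three regimes $n_\ga<\eps^{-2(1-\ga)+\de}$, $n_\ga\in[\eps^{-2(1-\ga)+\de},\eps^{-2(1-\ga)-\de}]$, and $n_\ga>\eps^{-2(1-\ga)-\de}$. By Lemma \ref{lemma:IR:exittime} the first and third regimes have probability at most $e^{-C\eps^{-\de}}$, and combined with the crude bound $|A_1+A_2+A_3|\leq K\eps^2 n_\ga\leq Ks$ their contribution to $\E[\eta]-f(r_0)$ is super-polynomially small. On the main regime, Theorem \ref{thm:normal-form} applies with $\beta$ independent of $\eps$, since IR strips lie outside every resonant zone $|q|\leq 2d$; this gives uniform bounds $\|E_2\|_{\CCC^0}$, $\|b\|_{\CCC^0}$, $\|v\|_{\CCC^0}$, $\|\sigma^2\|_{\CCC^0}\leq K$. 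Together with $\|f\|_{\CCC^3}\leq C$, the trivial estimates $|A_1|,|A_2|\leq K\eps^2 n_\ga\leq K\eps^{2\ga-\de}$ and $|A_3|\leq K\eps^{2+a} n_\ga=\OO(\eps^{2\ga+a-\de})$ yield the claimed $\OO(\eps^{2\ga-\de})$.

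The main substantive obstacle is already absorbed into Lemma \ref{lemma:IR:exittime}: the variance lower bound $\sigma^2(\theta_{n_i},r_{n_i})\geq K>0$ is what singles out the IR regime, and it relies on the trigonometric polynomial structure of $v$, namely that summing the degree-$2d$ polynomial $v^2$ along an orbit of a rational rotation with period $q>2d$ recovers exactly its full average. Once that exit time estimate is in hand, the present lemma is a direct adaptation of the TI argument in which the ergodization step of Lemma \ref{lemmasigma2} is replaced by trivial sup-norm bounds that saturate the $\eps^{2\ga-\de}$ tolerance by design.
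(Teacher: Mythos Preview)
Your proof is correct and follows essentially the same approach as the paper: define $\eta$, expand via Taylor and the recursion to reach \eqref{eta-version3-IR}, kill the $\omega_k$-linear part, split by exit-time regimes using Lemma~\ref{lemma:IR:exittime}, and on the main window replace the ergodization step by the trivial bound $|A_i|\lesssim\eps^2 n_\ga\leq K\eps^{2\ga-\de}$. Your treatment of the martingale term via optional stopping (noting that $\{n_\ga>k\}\in\mathcal F_k$) is slightly more careful than the paper, which simply invokes independence of $\omega_k$ from $(\theta_k,r_k)$ termwise without discussing the random upper limit.
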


\begin{proof}
Proceeding as in the proof of Lemma \ref{lemmaexpectation}, we define
\begin{equation}\label{defeta-IR}
\eta=f(r_{n_\ga})-
\eps^2
\sum_{k=0}^{n_\ga-1}\left(b(r_k)f'(r_k)+\frac{\sigma^2(r_k)}{2
}
f''(r_k)\right),
\end{equation}
which can be written as
\begin{equation}\label{eta-version3-IR}
 \begin{split}
 \eta=&f(r_0)+\sum_{k=0}^{n_\ga-1}f'(r_k)\eps\omega_k\left
[v(\theta_k,r_k)+\eps v_2(\theta_k,r_k)\right]\\
 &+\eps^2\sum_{k=0}^{n_\ga-1}f'(r_k)\left[E_2(\theta_k,
r_k)-b(r_k)\right]\\
 &+\frac{\eps^2}{2}\sum_{k=0}^{n_\ga-1}f''(r_k)\left[
v^2(\theta_k,r_k)-\sigma^2(r_k)\right]\\
 &+\sum_{k=0}^{n_\ga-1}\mathcal{O}(\eps^{2+a}).
\end{split}
 \end{equation}
Using the law of total expectation and taking $\de>0$ small enough,
\begin{eqnarray}\label{lawtotalexp-v2}
\beal 
\E\left(\eta\right)&=
\E\left(\eta\,|\,\eps^{-2(1-\ga)+\delta}\leq n_\ga\leq
\eps^{-2(1-\ga)-\delta}\right)
\Prob\left\{\eps^{-2(1-\ga)+\delta}\leq n_\ga\leq 
\eps^{-2(1-\beta)-\delta}\right\}\\
&+\E\left(\eta\,|\,n_\ga< \eps^{-(1-\ga)+\delta}\right)
\Prob\left\{n_\ga<
\eps^{-2(1-\ga)+\delta}\right\}\\
&+\E\left(\eta\,|\, \eps^{-2(1-\ga)-\delta}<n_\ga\leq s\eps^{-2}\right)
\Prob\left\{\eps^{-2(1-\ga)-\delta}<n_\ga\leq s\eps^{-2}\right\}.
\enal
\end{eqnarray}
By Lemma \ref{lemma:IR:exittime}, we have
\[
\begin{split}
\Prob\left\{n_\ga<\eps^{-2(1-\ga)+\delta}\right\}&\le 
%e^{-\frac{C}{\eps^{\delta}}}
e^{-{C\eps^{-\delta}}} \\
\Prob\left\{\eps^{-2(1-\ga)-\delta}<n_\ga\leq s\eps^{-2}\right\}&\le 
%e^{-\frac{C}{\eps^{\delta}}}
 e^{-{C\eps^{-\delta}}}.
\end{split}
\]
% and, thus, \eqref{lawtotalexp} yields:
% \begin{eqnarray}\label{lawtotalexp-v2}
% %\beal 
% \E\left(\eta\right)=
% %\qquad \qquad \quad \qquad \qquad \qquad \qquad \qquad \qquad %\qquad \qquad \qquad \qquad  \\
% \E(\eta\,|\,\eps^{-(1-\beta)+\delta}\leq n_\beta\leq \eps^{-2(1-\beta)-\delta})
% \Prob\{\eps^{-(1-\beta)+\delta}\leq n_\beta\leq \eps^{-2(1-\beta)-\delta}\} \nonumber \\
% +\E\left(\eta\,|\,n_\beta> \eps^{-2(1-\beta)-\delta}\right)
% \Prob\{n_\beta>\eps^{-2(1-\beta)-\delta}\}.\qquad \qquad 
% \qquad \qquad \qquad 
% %\enal
% \end{eqnarray}
As in the proof of Lemma \ref{lemmaexpectation}, 
%begin{equation}\label{def:ZeroExpFirstOrder}
\begin{equation}\label{eq:IR:zeroexpeceps}
\begin{split}
\E(\omega_kf'(r_k)[v(\theta_k,r_k)+\eps v_2(\theta_k,r_k)])&=\\
\E(\omega_k)\E(f'(r_k)[v(\theta_k,r_k)+\eps v_2(\theta_k,r_k)])&=0.
\end{split}
\end{equation}
for all $k\in\mathbb{N}$ and  one can obtain the needed estimates for the
second and third row of \eqref{lawtotalexp-v2} exactly as in the proof of Lemma
\ref{lemmaexpectation}.

To upper bound the first row in \eqref{lawtotalexp-v2}, recall 
$ n_\ga\in (\eps^{-(1-\ga)+\delta},\eps^{-2(1-\ga)-\delta})$. Then, using
\eqref{eta-version3-IR} and  \eqref{eq:IR:zeroexpeceps},
\[
\left |\E\left(\eta-f(r_0)\,|\,\eps^{-2(1-\ga)+\delta}\leq n_\ga\leq
\eps^{-2(1-\ga)-\delta}\right)\right|\leq K\eps^2 n_\ga\leq K\eps^{2\ga-\delta}.
\]
This completes the proof of the lemma.
\end{proof}

\subsection{From a local diffusion to the global one: proof of Theorem 
\ref{maintheorem} }
\label{sec:IR-cyl-to-line}

In Sections \ref{sec:TI-case} and  \ref{sec:IR-case} we proved 
\emph{local} versions of formula \eqref{eq:suff-condition} in totally irrational and imaginary rational strips. 
Namely,  as long as we stay in one of the strips $I^j_\gamma$ of these two types, 
 for any $s>0$,  any time 
$n\le s\eps^{-2}$ and any $(\theta_0,r_0)$, as $\eps\to 0$, we have  
\begin{equation}\label{def:ExpNF}
\E(\eta_f)\to 0\,\, \text{ with }\eta_f= f(r_{n})- \eps^2
\sum_{k=0}^{n-1}\left(b(r_k)f'(r_k)+\frac{\sigma^2(r_k)}{2}
f''(r_k)\right)-f(r_0).
\end{equation}
In \cite{CGK}, an analogous analysis is done for the resonant strips. To complete the proof of Theorem 
\ref{maintheorem}, it suffices to  prove the global version in the whole 
cylinder. Namely, when the iterates visit  totally irrational, 
imaginary rational strips  and resonant zones.

To this end, we need to analyze how the iterates visit the different strips. We
 model these visits as a \emph{random walk}. It turns out that in the core of resonant zones we face serious technical difficulties since they are significantly different from  the non-resonant zones  (see
\cite{CGK}). Since the cores have a very
small measure, we prove that the fraction of time spent in those cores is
rather low and, thus, has small influence in the long time behavior.

To be  able to finally combine the resonant and non-resonant regimes, we
consider a second division of both the resonant and non-resonant zones in strips
of bigger size than $I_\ga^j$. The behavior in those strips will be the same at
either non-resonant and resonant strips. This will allow us to later ``join''
both regimes.

Fix a parameter $\kk\in (1/3,1/11)$ and divide both resonant and
non-resonant zones  into intervals $\I_\kk^j$ of length $\eps^\kk$. The
non-resonant zones are chosen so that   the endpoints of those strips coincide
with endpoints of the previous grid of strips $I_\ga^j$. Each interval
$\I_\kk^j$ contains $\eps^{\kk-\ga}$ $I_\ga^j$ strips. This new division at the
resonant zones is done in \cite{CGK}.

We prove in the non-resonant strips $\I_\kk^j$ a result analogous to Lemma
\ref{lemmaexpectation}. Namely, we show that, since the relative measure of
Imaginary Rational strips is very small, the behavior in the strip  $\I_\kk^j$
is given by the behavior of the Totally Irrational substrips $I_\gamma^j$.

\begin{lem}\label{lemma:expectlemmaBigStrips}
Consider $C>0$,  $\kk\in (1/3,1/11)$ and a strip $\I_\kk^j$ in the
non-resonant zone $\mathcal D_\beta$ (see \eqref{eq:non-res-domain}). Take 
$f:\mathbb{R}\rightarrow\mathbb{R}$ be any $\mathcal{C}^l$ function with $l\ge 
3$ and $\|f\|_{\CCC^3}\leq C$. Then there exists $\zeta>0$ such that
\begin{equation}\label{eq:ExpectIntermStrips}
\E\Bigg(f(r_{n_\kk})- \eps^2
\sum_{k=0}^{n_\kk-1}\left(b(r_k)f'(r_k)+\frac{\sigma^2(r_k)}{2}
f''(r_k)\right) \Bigg)
- f(r_0)=\mathcal{O}(\eps^{2\kk+\zeta}).
\end{equation}
where $b$ and $\sigma$ are the functions defined in \eqref{eq:drift-variance}.

Moreover, call $n_\kk$ the exit time from these strips. Then,  there
exists a constant  $C'>0$ such that, 
\begin{itemize}
\item For any $\de\in (0, 2(1-\kk))$ and $\eps>0$ small
enough,
\[
\Prob\{n_\kk<\eps^{-2(1-\kk)+\delta}\}\leq
e^{- C' \eps^{- \delta}}.
\]
\item For any   $\de>0$ and $\eps>0$ small
enough,
\[
\Prob\{\eps^{-2(1-\kk)-\delta}<n_\ga<s\eps^{-2}\}\leq
e^{- C' \eps^{- \delta}}.
\]
\end{itemize}
\end{lem}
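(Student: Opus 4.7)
The plan is to reduce the lemma to the two local estimates already established, namely Lemma \ref{lemmaexpectation} for TI strips and Lemma \ref{lemmaexpectation-IR} for IR strips, using the measure bound from Appendix \ref{sec:measure-IR-RR} to control the contribution of IR substrips. More precisely, inside a given big strip $\I_\kk^j\subset \mathcal D_\beta$, I would introduce intermediate stopping times $0=m_0<m_1<\dots<m_M=n_\kk$, where $m_{i+1}$ is the first iterate after $m_i$ for which $r$ crosses into an adjacent $I_\ga$ substrip (capped at $n_\kk$). Each sub-trajectory between $m_i$ and $m_{i+1}$ stays inside a single substrip $I_\ga^{j_i}$, which is either TI or IR.

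Using the strong Markov property conditionally on $\mathcal F_{m_i}$, the two local lemmas give $\E(\eta_{f,i}\mid \mathcal F_{m_i})=\mathcal O(\eps^{2\ga+\zeta_0})$ in the TI case and $\mathcal O(\eps^{2\ga-\delta_0})$ in the IR case, where
\[
\eta_{f,i}=f(r_{m_{i+1}})-f(r_{m_i})-\eps^2\sum_{k=m_i}^{m_{i+1}-1}\left(b(r_k)f'(r_k)+\frac{\sigma^2(r_k)}{2}f''(r_k)\right).
\]
Telescoping gives $\E(\eta_f)=\sum_i\E(\eta_{f,i})$. A typical realization has $M\sim\eps^{2\ga-2\kk}$ small-strip crossings, so the TI contribution is $M\cdot \eps^{2\ga+\zeta_0}=\mathcal O(\eps^{2\kk+\zeta_0})$. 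For the IR contribution, Appendix \ref{sec:measure-IR-RR} tells us that IR substrips cover only an $\mathcal O(\eps^\rho)$-fraction of the non-resonant zone, so the expected number of IR visits is $\mathcal O(\eps^\rho M)$, producing a total IR error $\mathcal O(\eps^{2\kk+\rho-\delta_0})$. Choosing $\delta_0<\rho$ and $\zeta=\min(\zeta_0,\rho-\delta_0)>0$ yields \eqref{eq:ExpectIntermStrips}.

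For the exit-time bounds, I would repeat the argument of Lemma \ref{lemma:TI:exittime} but with windows of length $\widetilde n_\kk=[\eps^{-2(1-\kk)}]$. Over one such window,
\[
r_{n_{i+1}}-r_{n_i}=\eps\,\widetilde n_\kk^{1/2}\xi+\mathcal O(\widetilde n_\kk^{3}\eps^2),
\]
with $\xi$ defined as in \eqref{def:xi}. Lemma \ref{main lemma} together with \textbf{[H1]} and Lemma \ref{lemmasigma2} (applied separately at each reachable $r$, possibly after further partitioning the range so that the TI/IR dichotomy of Lemmas \ref{lemma:TI:exittime} and \ref{lemma:IR:exittime} kicks in) shows that $\xi$ converges in distribution to a centered Gaussian with variance bounded below by a positive constant. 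The constraint $\kk<1/3$ guarantees $\widetilde n_\kk^{3}\eps^2=\eps^{6\kk-4}\ll \eps^\kk$, so the remainder is negligible compared to the target displacement $\eps^\kk$. The remainder of the argument (both the lower and upper probabilistic tails) then goes through verbatim as in Lemma \ref{lemma:TI:exittime}.

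The main obstacle is turning the \emph{static} measure estimate on IR strips from Appendix \ref{sec:measure-IR-RR} into a \emph{dynamical} bound on the expected number of IR substrip visits within the time horizon $\eps^{-2(1-\kk)}$. Since $M$ is itself a random variable, and since the random walk may revisit a given substrip many times, a careful occupation-time (or Markov chain mixing) argument is required: one has to show that the empirical fraction of time spent in IR substrips is with high probability close to the relative measure $\eps^\rho$, up to subexponentially small error. A secondary issue is guaranteeing that the uniform lower bound on $\sigma^2(r)$ used in the exit-time step remains valid across all small substrips one may visit, which uses that $\DD_\beta$ is bounded away from low-order rationals together with continuity of $\sigma^2$ in $r$.
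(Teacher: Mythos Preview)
Your telescoping strategy for \eqref{eq:ExpectIntermStrips} is the same as the paper's, and you correctly identify the IR--visit count as the point that needs work. The paper handles precisely this ``main obstacle'' by first \emph{resizing} the substrips $I_\ga^j=[A_j\eps^\ga,A_{j+1}\eps^\ga]$ so that the induced walk on the grid $\{A_j\eps^\ga\}$ is exactly symmetric (Lemma~\ref{lemma:randomwalkinter}; the $A_j$ are chosen using the boundary problem for the generator), and then proving an occupation--time bound for the symmetric walk (Lemma~\ref{lemma:ShortVisitsB}) via return--time asymptotics $\Prob(f_1=m)\sim (\pi m)^{-1/2}(2m-1)^{-1}$. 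Without the symmetrization, the drift at scale $\eps^\ga$ would bias the walk and the occupation estimate would not be so clean; this is a step your outline does not mention.

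Your proposed proof of the exit--time bounds, however, does not work. You try to rerun Lemma~\ref{lemma:TI:exittime} with windows $\widetilde n_\kk=[\eps^{-2(1-\kk)}]$ and assert that $\widetilde n_\kk^{\,3}\eps^2=\eps^{6\kk-4}\ll\eps^\kk$. But $6\kk-4>\kk$ is equivalent to $\kk>4/5$, while here $\kk<1/3$; in fact $6\kk-4<0$, so $\eps^{6\kk-4}\to\infty$. The linearization $\theta_{n_i+k}\approx\theta_{n_i}+kr_{n_i}$ over a window of length $\eps^{-2(1-\kk)}$ is therefore completely uncontrolled, and the decomposition $r_{n_{i+1}}-r_{n_i}=\eps\,\widetilde n_\kk^{1/2}\xi+\mathcal O(\widetilde n_\kk^{3}\eps^2)$ carries a remainder that swamps both the fluctuation term and the target displacement $\eps^\kk$. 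This is exactly why the paper does \emph{not} argue directly at scale $\kk$: instead it combines the already--proved $\ga$--scale exit--time bounds (Lemmas~\ref{lemma:TI:exittime} and \ref{lemma:IR:exittime}) with standard exit--time estimates for the symmetric random walk on the $\sim\eps^{\kk-\ga}$ grid points (equation \eqref{eq:Probj*}), and then patches them together by conditioning on both $j^*$ and the individual $\Delta_j=n_\ga^j-n_\ga^{j-1}$. The upshot is that the $n_\kk$ tail bounds are inherited from the $n_\ga$ ones through the random--walk layer, not reproved from scratch.

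A minor point: your ``$M\sim\eps^{2\ga-2\kk}$'' has the sign reversed; the number of substrip crossings before exit is $\sim\eps^{2(\kk-\ga)}$, which is large.
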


This lemma is proven in Section \ref{sec:ProofExpecIntermediateStrips}. An analogous lemma for the resonant zones is proven in \cite{CGK}. In that lemma we replace $\mathcal D_\beta$ from \eqref{eq:non-res-domain}
by $\mathcal R_\beta^{p/q}$ from \eqref{eq:res-domain} and  the $r$-component by the Hamiltonian $H$. All the rest is the same.

% referred to the variable $H$ instead to the variable $r$.
%\begin{lem}\label{lemma:expectlemmaBigStripsRes}
%Consider $C>0$, $\kk\in (1/3,1/11)$ and a strip $\I_\kk^j$ in the resonant zone
%$\mathcal R_\beta^{p/q}$ (see \eqref{eq:res-domain}). Take 
%$f:\mathbb{R}\rightarrow\mathbb{R}$ be any $\mathcal{C}^l$ function with $l\ge 
%3$ and $\|f\|_{\CCC^3}\leq C$. Then there exists $d>0$ such that
%\begin{eqnarray*}
%&&\E\Bigg(f(H_{n_\kk})- \eps^2
%\sum_{k=0}^{n_\kk-1}\left(b(H_k)f'(H_k)+\frac{\sigma^2(H_k)}{2}
%f''(H_k)\right) \Bigg)\\
%&&\qquad \qquad \qquad \qquad \qquad  \qquad \qquad 
%- f(H_0)=\mathcal{O}(\eps^{2\kk+d}).
%\end{eqnarray*}
%where $b$ and $\sigma$ are the functions defined in \eqref{def:DriftEstimates}.
%
%Moreover, call $n_\kk$ the exit time from this strips. Then,  there
%exists a constant  $C>0$ such that, 
%\begin{itemize}
%\item For any $\de\in (0, 2(1-\kk))$ and $\eps>0$ small
%enough,
%\[
%\Prob\{n_\kk<\eps^{-2(1-\kk)+\delta}\}\leq
%e^{-\frac{C}{\eps^{\delta}}}.
%\]
%\item For any   $\de>0$ and $\eps>0$ small
%enough,
%\[
%\Prob\{\eps^{-2(1-\kk)-\delta}<n_\kk<s\eps^{-2}\}\leq
%e^{-\frac{C}{\eps^{\delta}}}.
%\]
%\end{itemize}
%\end{lem}
%

\subsubsection{Proof of Lemma \ref{lemma:expectlemmaBigStrips}}\label{sec:ProofExpecIntermediateStrips}
The strip $\I_\kk=\I_\kk^j$ is the union of $\eps^{\kk-\ga}$ totally irrational
and imaginary rational strips. We  analyze the amount of visits that are done to
each strip and we
prove that the time spent in Imaginary Rational strips is small compared with 
the time spent in the Totally Irrational strips. Assume $r_0=0$ (if not just 
apply a translation). We want to 
model the visits to the different strips in $\I_\kk$ by a symmetric random walk.  
% Consider 
% $R\gg 1$, which we will fix a posteriori, and
% consider the 
% endpoints of the strips $[-R,R]$. 

Modifying slightly the strips considered in 
Sections 
\ref{sec:TI-case} and \ref{sec:IR-case}, we consider endpoints of the strips 
\[
 r_j=A_j \eps^\ga,\quad j\in\ZZ
\]
with some (later determined) constants $A_j$ independent of $\eps$ to the leading order and satisfying $A_0=0$, $A_1=A>0$ 
and 
$A_j<A_{j+1}$ for $j>0$ (and similarly for $j<0$).  We consider the strips 
\[
 I_\ga^j=[r_j,r_{j+1}]=[A_j\,\eps^\ga, A_{j+1}\,\eps^\ga].
\]
To analyze the visits to these strips, we consider the lattice of points 
$\{r_j\}_{j\in\ZZ}\subset\R$ and we analyze the ``visits'' to these points. By 
visit we mean the existence of an iterate $\OO(\eps)$-close to it.  Lemmas 
\ref{lemma:TI:exittime} and \ref{lemma:IR:exittime} imply that 
if we start with $r=r_j$ we hit either $r_{j-1}$ or 
$r_{j+1}$  with probability one. This process  can be treated as a random walk 
for $j\in\ZZ$,
\begin{equation}\label{def:randomwalk:inter}
 S_j=\sum_{i=0}^{j-1}Z_i,
\end{equation}
where $Z_i
$ are Bernouilli variables taking values $\pm 1$. $Z_i$'s are not necessarily
symmetric.  Thus, we choose 
the constants $A_j>0$ so that the $Z_i$ are Bernouilli 
variables with $p=1/2$.

\begin{lem}\label{lemma:randomwalkinter}
There exist constants  $J^\pm>0$  independent of $\eps$ 
and  $\{A_j\}_j,\  j\in \left[\lfloor J_-\eps^{\kk-\ga}\rfloor, \lfloor 
J_+\eps^{\kk-\ga}\rfloor \right]$ such that 
\begin{itemize}
 \item
$A_j=A_{j-1}+(A_1-A_{0})\exp(-\int_0^{r_{j-1}}\frac { 
2b(r)}{\sigma^2(r)}dr)+\OO(\eps^{\ga}).$

\item $\displaystyle \I_\kk\subset\bigcup_{j=\lfloor 
J_-\eps^{\kk-\ga}\rfloor}^{\lfloor 
J_+\eps^{\kk-\ga}\rfloor}[A_j,A_{j+1}]$.
\item The random walk process induced by the map \eqref{eq:NRmap-n} on the 
lattice $\displaystyle\{r_j\}_j,\  
j\in \left[ \lfloor J_-\eps^{\kk-\ga}\rfloor, \lfloor 
J_+\eps^{\kk-\ga}\rfloor \right]$ is a symmetric random walk.
\end{itemize}
\end{lem}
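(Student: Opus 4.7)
The plan is to choose the lattice points $\{r_j\}$ so that the exit probabilities from each segment $[r_{j-1}, r_{j+1}]$ starting at $r_j$ equal $1/2$ up to negligible errors, which by definition makes the induced walk symmetric. The key tool is the scale function of the target diffusion together with Lemma \ref{lemma:expectlemmaBigStrips} (or Lemma \ref{lemmaexpectation}) applied to it. Concretely, I would introduce $h:\R\to\R$ defined by $h'(r)=\exp\bigl(-\int_0^{r}\frac{2b(u)}{\sigma^2(u)}\,du\bigr)$, so that the generator of the diffusion annihilates $h$: $Ah = b h' + \tfrac12\sigma^2 h''\equiv 0$. Since $b,\sigma^2\in\mathcal{C}^{l-1}$ with $\sigma^2$ bounded below on $\I_\kk$ (by \textbf{[H1]} and continuity), $h$ is smooth and $\|h\|_{\CCC^3}$ is bounded independently of $\eps$ on a bounded neighbourhood of $\I_\kk$.

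Next I would apply Lemma \ref{lemma:expectlemmaBigStrips} with $f=h$ to the exit time $n_\ga$ from the segment $[r_{j-1},r_{j+1}]$. Because $Ah\equiv 0$, the entire drift/variance correction vanishes, and one obtains $\E\bigl(h(r_{n_\ga})\bigr)-h(r_j)=\OO(\eps^{2\ga+\zeta})$. Writing $p_j=\Prob(r_{n_\ga}\approx r_{j+1}\mid r_0=r_j)$ and using that the overshoot at the boundary is $\OO(\eps)$ (from Lemmas \ref{lemma:TI:exittime} and \ref{lemma:IR:exittime}), this identity becomes $p_j\bigl(h(r_{j+1})-h(r_j)\bigr)+(1-p_j)\bigl(h(r_{j-1})-h(r_j)\bigr)=\OO(\eps^{2\ga+\zeta})$. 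Since $h(r_{j\pm1})-h(r_j)\sim\eps^\ga$, solving yields
\[
p_j=\frac{h(r_j)-h(r_{j-1})}{h(r_{j+1})-h(r_{j-1})}+\OO(\eps^{\ga+\zeta}).
\]
To force $p_j=\tfrac12+\OO(\eps^{\ga+\zeta})$, I would define the $A_j$ inductively by imposing constant scale-function increments $h(r_j)-h(r_{j-1})=h(r_1)-h(r_0)$. Taylor-expanding $h$ around $r_{j-1}$ gives $(A_j-A_{j-1})\,h'(r_{j-1})\,\eps^\ga+\OO(\eps^{2\ga})=(A_1-A_0)\,h'(0)\,\eps^\ga+\OO(\eps^{2\ga})$, which after normalising $h'(0)=1$ produces exactly the recursion stated in item~(i).

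The second and third items then follow routinely. Because $\|b\|_{\CCC^0}\le K$ and $\sigma^2$ is bounded and bounded away from zero on $\I_\kk$, the exponential $h'(r_{j-1})$ lies between two positive constants uniformly in $j$, so the consecutive gaps $A_j-A_{j-1}$ are comparable to $A_1-A_0$. Consequently covering the strip $\I_\kk$ of length $\eps^\kk$ requires of the order $\eps^{\kk-\ga}$ lattice points, yielding constants $J_\pm>0$ independent of $\eps$ and item~(ii). Item~(iii) is then built in by construction: the $Z_i$ are Bernoulli with success probability $\tfrac12+\OO(\eps^{\ga+\zeta})$.

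The main obstacle will be controlling the accumulation of the per-step asymmetry over the $\OO(\eps^{\kk-\ga})$ hitting steps needed to exit $\I_\kk$; the budget $\eps^{\ga+\zeta}\cdot\eps^{\kk-\ga}=\eps^{\kk+\zeta}$ is comfortably small, but I would need to make this rigorous via a coupling of the true walk with a genuine symmetric simple random walk, using $\zeta>0$ to close the estimate. A secondary (smaller) issue is the overshoot at each boundary, which shifts the effective starting point of the next segment by $\OO(\eps)\ll\eps^\ga$, and which propagates into the subsequent $p_j$'s only at the $\OO(\eps^{1-\ga})$ level, well below the $\eps^{\ga+\zeta}$ tolerance we have allowed.
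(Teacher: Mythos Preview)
Your approach is essentially the paper's: both use a function in the kernel of the generator (the paper solves the local two-point boundary problem $f(r_{j-1})=0$, $f(r_{j+1})=1$; you take the global scale function $h$ and impose equal $h$-increments), apply the local expectation lemmas to identify the leading-order exit probability, and choose the $A_j$ iteratively to make it $1/2$, recovering the stated recursion via a Taylor/mean-value expansion of $m(r)=h'(r)$. One warning: invoking Lemma~\ref{lemma:expectlemmaBigStrips} here is circular, since its proof relies on Lemma~\ref{lemma:randomwalkinter}; you must use Lemmas~\ref{lemmaexpectation} and~\ref{lemmaexpectation-IR} directly (as your parenthetical hedge suggests), adapted to the double-width segment $[r_{j-1},r_{j+1}]$.
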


% 
% \begin{lem}\label{lemma:randomwalkinter}
% Let $A_1=A>0$ for some $A>0$, $A_0=0$. Then there are 
% remainders $\OO(\eps^{\ga})$ such that for 
% $$
%  A_j=A_{j-1}+(A_1-A_{0})\exp \left({-\int_0^{r_{j-1}}
%  \frac {2b(r)}{\sigma^2(r)}dr}\right)+\OO(\eps^{\ga})
% $$
% the random walk induced by the map \eqref{eq:NRmap-n} 
% on the lattice $\displaystyle\{r_j\}_{j=\lfloor 
% J_-\eps^{\kk-\ga}\rfloor}^{\lfloor 
% J_+\eps^{\kk-\ga}\rfloor}$ is a symmetric random walk.
% \end{lem}
% Notice that \ $\displaystyle \I_\kk\subset\bigcup_{j=\lfloor 
% J_-\eps^{\kk-\ga}\rfloor}^{\lfloor 
% J_+\eps^{\kk-\ga}\rfloor}[A_j,A_{j+1}]$.

% {\bf old version is commented}
%\begin{lem}
%There exist constants  $J^\pm>0$   independent of $\eps$ 
%and  $\{A_j\}_{j=\lfloor 
%J_-\eps^{\kk-\ga}\rfloor}^{\lfloor 
%J_+\eps^{\kk-\ga}\rfloor}$ such that 
%\begin{itemize}
% \item
%\[
%A_j=A_{j-1}+(A_1-A_{0})e^{-\int_0^{r_{j-1}}\frac { 
%2b(r)}{\sigma^2(r)}dr}+\OO(\eps^{\ga})
%\]
%\item $\displaystyle \I_\kk\subset\bigcup_{j=\lfloor 
%J_-\eps^{\kk-\ga}\rfloor}^{\lfloor 
%J_+\eps^{\kk-\ga}\rfloor}[A_j,A_{j+1}]$.
%\item The random walk process induced by the map \eqref{eq:NRmap-n} on the 
%lattice $\displaystyle\{r_j\}_{j=\lfloor 
%J_-\eps^{\kk-\ga}\rfloor}^{\lfloor 
%J_+\eps^{\kk-\ga}\rfloor}$ is a symmetric random walk.
%\end{itemize}
%\end{lem}

\begin{proof}
To compute the probability of hitting (an $\eps$-neighborhood of) either 
$r_{j\pm 1}$ from $r_j$, we use the local 
expectation lemmas (Lemmas \ref{lemmaexpectation} and 
\ref{lemmaexpectation-IR}). Therefore we can consider $f$ in the kernel of the 
infinitesimal generator $A$ of the diffusion process (see 
\eqref{eq:diffusion-generator}) and  solve the boundary problem
\[
 b(r)f'(r)+\frac{1}{2}\sigma^2(r)f''(r)=0,\qquad f(r_{j-1})=0,\quad f(r_{j+1})=1
\]
The solution gives the probability of hitting $r_{j+1}$ before hitting 
$r_{j-1}$  starting at a given 
$r\in[r_{j-1},r_{j+1}]$. The unique solution is given by
\[
 f(r)=\frac{\int_{r}^{r_{j+1}}\exp(-\int_0^\rr 
\frac{2b(s)}{\sigma^2(s)}ds)\ d\rr}{
\int_{r_{j-1}}^{r_{j+1}}\exp(-\int_0^\rr 
\frac{2b(s)}{\sigma^2(s)}ds)\ d\rr}.
\]
We use $f$ to choose the coefficients $A_j$ iteratively (both as $j>0$ increases 
and $j<0$ decreases). Assume that $A_{j-1}$, $A_j$ have been fixed. Then, to 
have a symmetric random walk, we have to choose $A_{j+1}$ such that 
$f(r_j)=1/2$. 

Define
\[
  m(r)=\exp\left(-\int_0^\rr 
\frac{2b(s)}{\sigma^2(s)}ds\right) 
%e^{\int_0^r\frac{2b(\rr)}{\sigma^2(\rr)}d\rr}
 \]
 and $D_j=A_j-A_{j-1}$. Then, using the mean value theorem, $f(r_j)=1/2$ can be 
 written as 
 \[
  \frac{m(\xi_j)D_j}{m(\xi_j)D_j+m(\xi_{j+1})D_{j+1}}=\frac{1}{2}
 \]
 where $\xi_j\in [A_{j-1},A_j]$ and $\xi_{j+1}\in [A_{j},A_{j+1}]$. Thus, one 
has
 \[
  D_{j+1}=\frac{m(\xi_{j})}{m(\xi_{j+1})}D_j \quad \text{  which implies }\quad 
% \]
%which implies
% \[
  D_{j+1}=\frac{m(\xi_{1})}{m(\xi_{j+1})}D_1.
 \]
Thus the length $D_j$ of the strip $I_\ga^j=[r_j,r_{j+1}]=[A_j\eps^\ga, 
A_j\eps^\ga]$.
 \[
  D_j=\frac{m(\xi_{1})}{m(\xi_{j})}\ D_0
 =A  \exp\left(-\int_0^{r_{j-1}} 
\frac{2b(s)}{\sigma^2(s)}ds)+\OO(\eps^{\ga}\right).
 \]
% 
%  
%  Using the expression of $f$, it can be seen that the only 
%  choice of $A_{j+1}$ which gives $f(r_j)=1/2$ is 
%  \[
%  A_{j+1}=A_j+(A_j-A_{j-1})\left(\frac{1}{2}-\frac{b(r_j)}{2\sigma^2(r_j)}
%  (A_j-A_{j-1})\eps^{\ga}+\OO\left(\eps^{2\ga}\right)\right),\quad j>0,
%  \]
%  where the $\OO\left(\eps^{2\ga}\right)$ is uniform in $j$. An analogous 
%  formula can be obtained for $j<0$. That is, the lengths of these strips 
%  $D_j=A_j-A_{j-1}$ satisfies 
%  \[
%  D_{j+1}=D_j\left(\frac{1}{2}-\frac{b(r_j)}{2\sigma^2(r_j)}
%  \eps^{\ga}+\OO\left(\eps^{2\ga}\right)\right).
%  \]
% Therefore, the distortion of the strips depends on $R$ but not on $\eps$. 
%REVISAR AIXÒ!!
The distortion of the strips does not depend on $\eps$ (at first order). Therefore, adjusting $A$ and $J_+$ one can obtain the intervals 
$[r_{j}, r_{j+1}]=[A_{j}\eps^\ga, A_{j+1}\eps^\ga]$ which  cover $\I_\kk$ with $r>0$. 
Proceeding analogously for $j<0$, one can do the same for $\I_\kk$ with $\{r<0\}$.
\end{proof}

To prove \eqref{eq:ExpectIntermStrips}, we need to combine the iterations within each strip $I_\ga^j$ and the random walk evolution among the strips. Since we have $\eps^{\kk-\ga}$ strips, the exit time $j^*$ for the random walk $S_j$ from $\I_\kk$ satisfies the following. There exists $C>0$ such that for any small $\de$ and $\eps$,
\begin{equation}\label{eq:Probj*}
 \begin{split}
 \Prob \left(j^*\geq \eps^{2(\kk-\ga)-\frac{\de}{2}}\right) & \leq
e^{-C\, \eps^{-{\de/2}}}\\
 \Prob \left(j^*\leq \eps^{2(\kk-\ga)+\frac{\de}{2}}\right) & \leq
e^{-C\, \eps^{-{\de/2}}}.
 \end{split}
\end{equation}
We use this to obtain the probabilities for the exit time $n_\kk$ stated in
Lemma \ref{lemma:expectlemmaBigStrips}. We prove the second statement for
$n_\kk$, the other one can be proved analogously. Call $j^*$ the exit time for
the random walk and $n_\ga^j$, $j=1, \ldots,j^*$ the exit times for the $j^*$
visited strip before hitting the endpoints of $\I_\kk$. Define also
$\Delta_j=n_\ga^j-n_\ga^{j-1}$ with $j\geq 2$, $\Delta_1=n_\ga^1$ and 
$X=\{\eps^{-2(1-\kk)-\delta}<n_\kk<s\eps^{-2}\}$. We condition the
probability as follows,
{\small \[
\begin{split}
&\Prob\{X\}\\
&\leq\Prob\left\{X\left|j^*\in
(\eps^{2(\kk-\ga)+\frac{\de}{2}},\eps^{2(\kk-\ga)-\frac{\de}{2}}), \Delta_j\in(
\eps^{-2(1-\ga)+\frac{\de}{2}},\eps^{-2(1-\ga)+\frac{\de}{2}}), j=1,\ldots,
j^*\right.\right\}\\
&\times \Prob\left\{j^*\in (\eps^{2(\kk-\ga)+\frac{\de}{2}},\eps^{2(\kk-\ga)-\frac{\de}{2}}), \Delta_j\in( \eps^{-2(1-\ga)+\frac{\de}{2}},\eps^{-2(1-\ga)+\frac{\de}{2}}), j=1,\ldots, j^*\right\}\\
&+\Prob\left\{X\left|j^*\not\in
(\eps^{2(\kk-\ga)+\frac{\de}{2}},\eps^{2(\kk-\ga)-\frac{\de}{2}}) \text{ or }
\exists j, \Delta_j\not\in(
\eps^{-2(1-\ga)+\frac{\de}{2}},\eps^{-2(1-\ga)+\frac{\de}{2}})\right.\right\}\\
&\times\Prob\left\{j^*\not\in
(\eps^{2(\kk-\ga)+\frac{\de}{2}},\eps^{2(\kk-\ga)-\frac{\de}{2}}) \text{ or }
\exists j, \Delta_j\not\in(
\eps^{-2(1-\ga)+\frac{\de}{2}},\eps^{-2(1-\ga)+\frac{\de}{2}})\right\}
\end{split}
\]}
For the first term in the conditionned probability we show that 
{\small \[
 \Prob\left\{X\left|j^*\in
(\eps^{2(\kk-\ga)+\frac{\de}{2}},\eps^{2(\kk-\ga)-\frac{\de}{2}}), \Delta_j\in(
\eps^{-2(1-\ga)+\frac{\de}{2}},\eps^{-2(1-\ga)+\frac{\de}{2}}), j=1,\ldots,
j^*\right.\right\}=0
\]}
Indeed, we have that 
\[
 n_\kk=\sum_{j=1}^{j^*}n_\ga^j\leq j^*\sup_j n_\ga^j<
\eps^{2(\kk-\ga)-\frac{\de}{2}}\cdot
\eps^{-2(1-\ga)-\frac{\de}{2}}\leq\eps^{-2(1-\kk)-\de}.
\]
Therefore, we only need to bound the second term in the conditioned probability. To this end, we need 
an upper bound for the number of visited strips. Since $n\le s\eps^{-2}$ and $|r_n-r_{n-1}|\lesssim\eps$, there exists a 
constant $c>0$ such that 
\[
\Delta_j=n_\ga^j-n_\ga^{j-1}\geq c\eps^{\ga-1}\quad\text{ for }j=0,\ldots, j^*-1.
\]
This implies that 
\begin{equation}\label{def:maxvisitedstrips}
j^*\lesssim \eps^{-1-\ga}.
\end{equation}
Thus, using Lemmas  \ref{lemma:TI:exittime} and \ref{lemma:IR:exittime},
{\small 
\[
\begin{split}
&\Prob\left\{j^*\not\in (\eps^{2(\kk-\ga)+\frac{\de}{2}},\eps^{2(\kk-\ga)-\frac{\de}{2}}) \text{ or } \Delta_j\not\in( \eps^{-2(1-\ga)+\frac{\de}{2}},\eps^{-2(1-\ga)+\frac{\de}{2}}) \text{ for some } j=1,\ldots, j^*\right\}\\
&\leq\eps^{-1-\ga}  e^{-C\eps^{-\de/2}}
\end{split}
\]}
Thus, taking a smaller $C>0$ and taking $\eps$ small, we obtain the second
statement for $n_\kk$ in Lemma \ref{lemma:expectlemmaBigStrips}. One can prove
the lower bound for $n_\kk$ analogously.

It only remains to prove \eqref{eq:ExpectIntermStrips}.
We define the 
Markov times  $0=n_\ga^0<n_\ga^1<n_\ga^2 <\dots <n_\ga^{j^*-1}<n_\ga^{j^*}<n$
for some random $j^*=j^*(\om)$ such that each $n_\ga^j$ is the stopping 
time as in \eqref{eq:stopping-time}, where $j^*$ denotes either the exit time
from $\I_\kk$ or the last change between strips $I_\ga^j$ inside $\I_\kk$.
By \eqref{eq:Probj*},  $j^*(\om)$ is the exit time except for an exponentially
small probability. We use conditionned expectation
as 
\[
 \E(\eta_f)=\E(\eta_f|A_1)\Prob (A_1)+\E(\eta_f|A_2)\Prob(A_2)
\]
with 
\[
 \begin{split}
  A_1=&\Big\{\eps^{-2(1-\kk)-\delta}<n_\kk<\eps^{-2(1-\kk)+\delta}, j^*\in
(\eps^{2(\kk-\ga)+\frac{\de}{2}},\eps^{2(\kk-\ga)-\frac{\de}{2}}),\\
&\Delta_j\in( \eps^{-2(1-\ga)+\frac{\de}{2}},\eps^{-2(1-\ga)+\frac{\de}{2}}),
j=1,\ldots, j^*\Big\}\\
A_2=&A_1^c.
 \end{split}
\]
Lemmas \ref{lemma:TI:exittime}, \ref{lemma:IR:exittime}, the estimates for
$n_\kk$ given in Lemma \ref{lemma:expectlemmaBigStrips} and 
\eqref{eq:Probj*} imply that $\Prob(A_2)\ll \eps^{2\kk+\zeta}$. Moreover, since
we only  consider functions $f$ such that $\|f\|_{\mathcal 
C^3}\leq C$ with $C>0$ independent of $\eps$, we have that 
\[
\left|\E(\eta_f|A_2)\Prob(A_2)\right|\lesssim \eps^{2\kk+\zeta}.
\]
Therefore, it only remains to bound $\E(\eta_f|A_1)\Prob (A_1)$. We use that
$\Prob (A_1)\leq 1$ and we estimate $\E(\eta_f|A_1)$.

We decompose the above sum as
$\eta_f=\sum_{j=0}^{j^*} \eta_j$ with 
\begin{eqnarray*}
\eta_j=&&f(r_{n_\ga^{j+1}})-
f(r_{n_\ga^{j}})- \qquad \qquad \\
&&  \eps^2
\sum_{s=n_\ga^j}^{n_\ga^{j+1}}\left(b(r_s)f'(r_s)+\frac{\sigma^2(r_s)}{2}
f''(r_s)\right).
\end{eqnarray*}
Theorems \ref{lemmaexpectation} and \ref{lemmaexpectation-IR} imply that for 
any $j$, 
\begin{equation}\label{def:localExpec:Intermediate}
\begin{split}
|\E(\eta_j)|&\lesssim\eps^{2\ga+\zeta}\qquad\text{ for totally irrational 
strips}\\
|\E(\eta_j)|&\lesssim\eps^{2\ga-\de}\qquad\quad\text{for imaginary rational 
strips},
\end{split}
\end{equation}
for some $\de>0$ arbitrarily small and some $\zeta>0$. To use these estimates,
we
need to control how many visits we do to each type of strips. Taking into
account that the visits to the strips are modelled by the symmetric random walk
$S_j$. Denote by 
$B\subset M= \{1,\ldots,\lceil \eps^{\kk-\ga}\rceil\}\subset\mathbb N$ the
endpoints of the Imaginary rational strips $I_\ga^j$ in $\I_\kk$.  By Appendix
\ref{sec:measure-IR-RR}, we know that 
\[
 |B|\lesssim \eps^{\kk-\ga+\rho}.
\]
Denote by $\mu=  |B|/\lceil \eps^{\kk-\ga}\rceil$ the relative measure of $B$ in $M$.
\begin{lem}\label{lemma:ShortVisitsB} Fix $\de>0$ small. There exists a constant
$C>0$ such that for $\eps>0$ small enough,
\[
 \Prob\left(\sharp \left\{j\in [0,j^*):S_j\in B\right\}\geq
j^*\mu\eps^{-\de} \right)\leq e^{-C \eps^{-\de/2}}
\]
\end{lem}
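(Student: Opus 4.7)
The plan is to turn the upper bound on $L := \sharp\{j \in [0, j^*) : S_j \in B\}$ into a lower-tail bound for the $K$-th visit time of $S_j$ to $B$, where $K := \lceil j^* \mu \eps^{-\delta}\rceil$. Let $\tau_k$ denote the time of the $k$-th visit of $S_j$ to $B$ (with $\tau_0 := 0$). Then the event $\{L \geq K\}$ coincides with $\{\tau_K \leq j^*\}$, and since $\mu \lesssim \eps^\rho$ by Appendix~\ref{sec:measure-IR-RR}, the goal reduces to showing that $\tau_K$ is substantially larger than $j^*$ with overwhelming probability.

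The main tool is a regenerative analysis of the inter-visit times $W_k := \tau_k - \tau_{k-1}$. By the strong Markov property at $\tau_{k-1}$ and a gambler's ruin computation on the gap of $B$ containing the walk (together with the fact that the typical gap of $B$ has length $\sim 1/\mu$, since $|B| = \mu N$ in $M$ of size $N = \lceil \eps^{\kk-\ga}\rceil$), one obtains the uniform conditional sub-exponential tail
\[
\Prob\!\left(W_k \geq t \,\big|\, \mathcal{F}_{\tau_{k-1}}\right) \leq C_1\, e^{-c_1 \mu t},
\]
so that $\E[W_k \mid \mathcal{F}_{\tau_{k-1}}] \gtrsim 1/\mu$ and hence $\E[\tau_K] \gtrsim K/\mu = j^* \eps^{-\delta} \gg j^*$. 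A Bernstein-type inequality for sums of conditionally sub-exponential martingale differences then yields
\[
\Prob(L \geq K) \;=\; \Prob(\tau_K \leq j^*) \;\leq\; \Prob\!\left(\tau_K \leq \tfrac{1}{2}\E[\tau_K]\right) \;\leq\; e^{-c_2 K}.
\]

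To convert $e^{-c_2 K}$ into the announced $e^{-C\eps^{-\delta/2}}$, I would combine this with the high-probability lower bound $j^* \geq \eps^{-2(1-\kk)+\delta/4}$ coming from the exit-time estimate already established in Lemma~\ref{lemma:expectlemmaBigStrips}; on that event $K \gtrsim \eps^{-\delta/2}$ (using $\mu \gtrsim \eps^\rho$ and the standing ranges of $\rho, \kk, \delta$), completing the proof. The main obstacle is the uniform sub-exponential control on $W_k$: although the typical gap of $B$ has size $1/\mu$, atypically large gaps and proximity of $\partial M$ generate heavier contributions to the inter-visit time, which must be tamed by weighting each gap by the probability of visiting it and exploiting the explicit form of the Green's function $G(x,y) = 2\min(x,y)(N-\max(x,y))/N$ of simple symmetric random walk on $\{0,\ldots,N\}$ with absorbing endpoints.
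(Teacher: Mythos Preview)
Your approach is genuinely different from the paper's, and in its present form it does not close. Let me point out the concrete gaps and then say what the paper does instead.

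\textbf{First gap: the tail rate on $W_k$.} After a visit to $B$ the walk sits at distance $1$ from $B$ inside a gap of length $g$. The exit time from $[0,g]$ has spectral decay $e^{-c\,t/g^2}$, not $e^{-c\,t/g}$; so even if every gap were of size $\sim 1/\mu$ your conditional tail would be $C_1 e^{-c_1\mu^2 t}$, not $C_1 e^{-c_1\mu t}$. You yourself flag the ``atypically large gaps'' problem in the last paragraph, and the Green's function formula you quote does not by itself upgrade the rate to $\mu$. Without the correct rate the Bernstein step and the resulting exponent need to be redone.

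\textbf{Second gap: you are using the wrong $j^*$.} In this lemma $j^*$ is the exit time of the \emph{symmetric random walk on the grid of strips}; by \eqref{eq:Probj*} it is of order $\eps^{2(\kk-\ga)}$. The bound $j^*\ge \eps^{-2(1-\kk)+\de/4}$ you invoke is the estimate for $n_\kk$, the exit time counted in iterates of the original map. These live on different time scales and cannot be interchanged; your final numerology ``$K\gtrsim \eps^{-\de/2}$'' therefore rests on the wrong input. A related subtlety is that $K=\lceil j^*\mu\eps^{-\de}\rceil$ is random, so the identity $\{L\ge K\}=\{\tau_K\le j^*\}$ does not reduce the problem to a deterministic-threshold lower-tail event; you must first restrict to the high-probability window for $j^*$ given by \eqref{eq:Probj*} (which is available, but you have not done it).

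\textbf{What the paper does.} The paper avoids any control of gap geometry. It first reduces the count of visits to $B$ to the count of visits to a single site, then uses that the most visited site is the starting point $0$, and finally exploits the classical first-return law $\Prob(f_1=m)=\binom{2m}{m}\frac{2^{-2m}}{2m-1}\sim c\,m^{-3/2}$, hence $\Prob(f_1\le m)\le 1-c\,m^{-1/2}$. The event ``at least $M$ returns to $0$ before time $j^*$'' forces each of the $M$ inter-return times to be $\le j^*$, and independence of the $f_k$ gives the bound $(1-c(j^*)^{-1/2})^{M}$, which produces the $e^{-C\eps^{-\de/2}}$ after inserting the range of $j^*$ from \eqref{eq:Probj*}. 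This route is more elementary: it uses only the heavy-tailed return-time law of simple random walk, no sub-exponential/Bernstein machinery and no information about how $B$ sits inside $M$.
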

\begin{proof}
We have that
\[
 \Prob\left(\sharp \left\{j\in [0,j^*):S_j\in B\right\}\geq
j^*\mu\eps^{-\de} \right)= \Prob\left(\sum_{k\in B}\sharp \left\{j\in [0,j^*):S_j=k\right\}\geq
j^*\mu\eps^{-\de} \right)
\]
Take any $k^*\in B$, then
\[
 \Prob\left(\sum_{k\in B}\sharp \left\{j\in [0,j^*):S_j=k\right\}\geq
j^*\mu\eps^{-\de} \right)\leq  \Prob\left(\sharp \left\{j\in [0,j^*):S_j=k^*\right\}\geq
j^*\eps^{-\ga+\kk-\de} \right).
\]
Since we start the random walk at $S_0=0$, it is clear that the probability of visiting $k^*$ $j$-times is lower than the probability of visit $0$ $j$-times. Namely, 
\[
 \Prob\left(\sharp \left\{j\in [0,j^*):S_j=k^*\right\}\geq
j^*\eps^{-\ga+\kk-\de} \right)\leq \Prob\left(\sharp \left\{j\in [0,j^*):S_j=0\right\}\geq
j^*\eps^{-\ga+\kk-\de} \right)
\]
We prove that such probability is exponentially small in $\eps$. Denote by $f_k$
the random variable that gives the number of iterates between the $k-1$ and $k$
visiting zero. Then, 
\[
\begin{split}
  \Prob\left(\sharp \left\{j\in [0,j^*):S_j=0\right\}\geq
j^*\eps^{-\ga+\kk-\de} \right) &=\Prob\left(\sum_{k=1}^{\lceil j^*\eps^{-\ga+\kk-\de} \rceil}f_k\leq j^*\right)\\
&\leq \prod_{k=1}^{\lceil j^*\eps^{-\ga+\kk-\de} \rceil}\Prob\left(f_k\leq j^*\right).
\end{split}
\]
Since the random variables $\{f_k\}$ are independent identically distributed,
\[
  \Prob\left(\sharp \left\{j\in [0,j^*):S_j=0\right\}\geq
j^*\eps^{-\ga+\kk-\de} \right)\leq \Prob\left(f_1\leq j^*\right)^{\lceil j^*\eps^{-\ga+\kk-\de} \rceil}.
\]
Since we are dealing with a symmetric random walk, it is well known that
\[
 \Prob(f_1= m)=\begin{pmatrix} 2m\\ m\end{pmatrix}\frac{2^{-2m}}{2m-1}.
\]
which satisfies
\[
 \Prob(f_1= m)\sim \frac{1}{\sqrt{\pi m}(2m-1)} \qquad\text{ as }m\to +\infty.
\]
Therefore, there exists a constant $c>0$ such that for $m$ large enough
\[
 \Prob(f_1\leq m)\leq 1-cm^{-1/2}\]
Then, one can conclude that 
\[
 \begin{split}
 \Prob\left(\sharp \left\{j\in [0,j^*):S_j\in B\right\}\geq
j^*\mu\eps^{-\de} \right)&\geq  \Prob\left(f_1\leq j^*\right)^{\lceil j^*\eps^{-\ga+\kk-\de} \rceil}\\
&   \leq \left(1- \frac{c}{(j^*)^{1/2}}\right)^{\lceil j^*\eps^{-\ga+\kk-\de} \rceil}\\
 &  \leq e^{-C \eps^{-{\de/2}}},
 \end{split}
\]
for some constant $C>0$ independent of $\eps$ and  $\eps$ small enough.
% 
% 
% Define $j_\de=\lfloor \eps^{-\de/2}\rfloor$ and let us partition
% $[0,j^*)=\cup_{k=0}^{j_\de-1}J_k$ with 
% \[
% J_k=[k\lfloor \eps^{-\de/2} j^*\rfloor,
% (k+1)\lfloor \eps^{-\de/2} j^*\rfloor)
% \]
% Then, 
% \[
% \begin{split}
% \Prob\left(\sharp \left\{j\in (0,j^*):S_j\in B\right\}\geq
% \eps^{2(\kk-\ga)+\rho-\de} \right) = & \,\Prob\left(\sum_{k=0}^{j_\de-1}\sharp
% \left\{j\in J_k:S_j\in B\right\}\geq
% \eps^{2(\kk-\ga)+\rho-\de} \right)\\
% \leq & \prod_{k=0}^{j_\de-1}\Prob\left(\sharp
% \left\{j\in J_k:S_j\in B\right\}\geq
% \eps^{2(\kk-\ga)+\rho-\de} \right)
% \end{split}
% \]
% Taking into account that $|J_k|\in
% (\eps^{2(\kk-\ga)+\de},\eps^{2(\kk-\ga)})$ and that $S_{\lfloor
% tj\rfloor}/\sqrt{j}$ tends to a Brownion motion, we can conclude that 
% \[
%  \Prob\left(\sharp
% \left\{j\in J_k:S_j\in B\right\}\geq
% \eps^{2(\kk-\ga)+\rho-\de} \right)\leq M
% \]
% for some $M<1$ and all $k$. This implies the statement of the lemma.
\end{proof}

Lemma \ref{lemma:ShortVisitsB} implies that it is enough to deal
with the case 
\[
\{n:S_n\in B\}\leq j^*\mu\eps^{-\de}\leq
\eps^{2(\kk-\ga)+\rho-2\de},
\]
where we have used that $j^*\leq\eps^{2(\kk-\ga)-\de}$ and $\mu\leq \eps^\rho$.
Using this and  \eqref{def:localExpec:Intermediate}, we can deduce that
\[
\begin{split}
\left|\E\left(\left.\sum_{j=0}^{j^*}\eta_j\right| A_1\right)\right|
&\lesssim
\eps^{2\ga+\zeta}\eps^{-2(\ga-\kk)-2\de}
+\eps^{2\ga-\de}\eps^{-2(\ga-\kk)+\rho-2\de}\\
&\leq \eps^{2\kk+\zeta-2\de}+\eps^{2+\kk+\rho-3\de}.
\end{split}
\]
Therefore, taking $\de>0$ small enough, we have proven
\eqref{eq:ExpectIntermStrips}.

\subsubsection{Proof of Theorem \ref{maintheorem}}
To complete the  proof of Theorem \ref{maintheorem} it is enough to use Lemmas
\ref{lemma:expectlemmaBigStrips} and the
corresponding lemma for the resonant strips
given in \cite{CGK} and
model the visits to the strips $\I_\kk^j$ as a random walk as we have done for
the strips $I_\ga^j$ to prove Lemma \ref{lemma:expectlemmaBigStrips} in Section
\ref{sec:ProofExpecIntermediateStrips}.

This proof is slightly different since we are dealing with a
non-compact domain and therefore we need estimates for the low probability of
doing big excursions. As before,  we assume $r_0=0$ (if not just 
apply a translation) and we 
treat the visits to the different strips $\I_\kk^j$ by a  random walk. 
Consider  $R\gg 1$, which we will fix a posteriori, and
consider the 
endpoints of the strips $[-R,R]$.

To prove \eqref{def:ExpNF}, we condition the expectation in a different way as
for the proof of Lemma \ref{lemma:expectlemmaBigStrips}. We condition it  as 
\begin{equation}\label{def:SplittingExpect}
\begin{split}
 \E (\eta)= &\E \left(\eta \left||r_n|< R\,\text{ for all }n\leq 
s\eps^{-2}\right.\right)\Prob\left(|r_n|< R\,\text{ for all }n\leq 
s\eps^{-2}\right)\\
&+ \E \left(\eta \left|\exists n^*\leq 
s\eps^{-2} \text{ with }|r_n|\geq R\right.\right)\Prob\left(\exists n^*\leq 
s\eps^{-2} \text{ with }|r_n|\geq R\right).
\end{split}
\end{equation}
We bound each row. We start with the second one. 

Since we are considering 
$n\leq 
s\eps^{-2}$ and we consider functions $f$ such that $\|f\|_{\mathcal 
C^3(\mathbb R)}\leq C$ with $C>0$ independent of $\eps$, we have that 
\[
\left| \E \left(\eta \left|\exists n^*\leq 
s\eps^{-2} \text{ with }|r_n|\geq R\right.\right)\right|\leq C'
\]
for some $C'>0$ which depends on $s$ but is independent of $\eps$ and $R$.
Thus, 
to bound the second row, it is 
enough to prove that choosing $R$ large enough, $\Prob\left(\exists n^*\leq 
s\eps^{-2} \text{ with }|r_{n^*}|\geq R\right)$ can be made as small as desired 
uniformly for small $\eps$.

We divide the interval $[-R,R]$ into equal substrips $\I_\kk^j$ of length equal
to $\eps^\kk$. It is clear that there are $R\eps^{-\kk}$ strip. We model the
visits to these strips as a non-symmetric random walk $S_j$ 
in \eqref{def:randomwalk}. Note that the this is significantly different from
Section \ref{sec:ProofExpecIntermediateStrips} since now the probabilities of
going left or right depend on the point (because of the drift). 

Note that now the random walk $S_j=\sum_{k=1}^j Z_j$ where each $Z_j$  is a
Bernouilli variable with probabilities $p_j$, $q_j$ which depend on the visited
strip. Proceeding as in the proof of Lemma \ref{lemma:randomwalkinter} and
taking into account that we have uniform bounds for the drift given in Theorem
\ref{thm:normal-form}, one can prove that at every strip the probabilities
$p_j$, $q_j$ satisfy
\[
 \left|p_j-\frac{1}{2}\right|\leq C\eps^\kk,\quad \left|q_j-\frac{1}{2}\right|\leq C\eps^\kk
\]
for some constant $C>0$ which is independent of $\eps$ and $R$. As a consequence,
\begin{equation}\label{eq:ExpecBernouilli}
 \left|\E Z_j\right| \leq 2C\eps^\kk.
\end{equation}

Call $j^*$ the first visit to one of the strips 
containing $r=\pm R$.  It is clear that 
\[
 j^*\geq R\eps^{-\kk}.
\]
We fix $\de>0$ small and we condition $\Prob\left(|r_n|< R\,\text{ for all }n\leq 
s\eps^{-2}\right)$ as follows. Call $X=\{|r_n|< R,\text{ for all }n\leq 
s\eps^{-2}\}$,
\begin{equation}\label{eq:ExitProbFinal}
 \begin{split}
  \Prob(X)=&\Prob\left(X\left|j^*\leq R^\de\eps^{-2\kk}\right.\right)\Prob\left(j^*\leq R^\de\eps^{-2\kk}\right)\\
  &+\Prob\left(X\left|j^*> R^\de\eps^{-2\kk}\right.\right)\Prob\left(j^*> R^\de\eps^{-2\kk}\right).
 \end{split}
\end{equation}
For the first row it is enough to use $|\Prob\left(X\left|j^*\leq R^\de\eps^{-2\kk}\right.\right)|\leq 1$ and the following lemma.

\begin{lem}
Fix $\eps_0>0$. Then,   for any $\eps\in (0,\eps_0)$ and $R>0$ large enough, 
\[
 \Prob\left(j^*\leq R^\de\eps^{-2\kk}\right)\leq e^{-CR^{2-\de}}
\]
for some constant $C>0$ independent of $\eps$ and $C>0$.
\end{lem}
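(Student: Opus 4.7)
The plan is to reduce the statement to Azuma--Hoeffding applied to a bounded-increment martingale, using the uniform drift estimate \eqref{eq:ExpecBernouilli}. The key observation is that the event $\{j^*\le R^\de\eps^{-2\kk}\}$ coincides with the event that the random walk $S_j=\sum_{k=1}^{j}Z_k$ exits the interval $[-M,M]$ with $M=\lfloor R\eps^{-\kk}\rfloor$ in at most $N=\lceil R^\de\eps^{-2\kk}\rceil$ steps, which in turn is contained in $\{\max_{j\le N}|S_j|\ge M\}$.

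Second, I would split $S_j$ into martingale and drift parts. Let $\mathcal F_j=\sigma(Z_1,\dots,Z_j)$ and set $M_j=S_j-\sum_{k=1}^{j}\E[Z_k\mid\mathcal F_{k-1}]$. By construction $\{M_j\}$ is a martingale with $|M_j-M_{j-1}|\le 1+2C\eps^\kk\le 2$ for $\eps$ small (here I use \eqref{eq:ExpecBernouilli} which holds uniformly in the visited strip, hence also in the conditional version). The drift piece satisfies the deterministic bound
\[
\left|\sum_{k=1}^{j}\E[Z_k\mid\mathcal F_{k-1}]\right|\le 2C\eps^\kk\cdot N\le 2CR^\de\eps^{-\kk},
\qquad j\le N.
\]
Provided $R$ is chosen so large that $2CR^\de\le R/2$ (which holds for all $R\ge R_0(C,\de)$ because $\de<1$), the event $\{\max_{j\le N}|S_j|\ge M\}$ is contained in $\{\max_{j\le N}|M_j|\ge M/2\}$.

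Third, I would apply the maximal form of Azuma's inequality to the bounded-increment martingale $M_j$:
\[
\Prob\!\left(\max_{j\le N}|M_j|\ge \tfrac{M}{2}\right)\le 2\exp\!\left(-\frac{(M/2)^2}{2\cdot 4\cdot N}\right)
=2\exp\!\left(-\frac{R^2\eps^{-2\kk}}{32\,R^\de\eps^{-2\kk}}\right)
=2\exp\!\left(-\tfrac{1}{32}R^{2-\de}\right),
\]
which is the asserted bound with $C=1/64$ (absorbing the factor $2$ into the exponent for $R$ large). The estimate is uniform in $\eps\in(0,\eps_0)$ because the constant $C$ in \eqref{eq:ExpecBernouilli} is independent of $\eps$ and $R$.

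The main technical point I expect to need care with is the first reduction: the $Z_k$ are not i.i.d.\ because their parameters depend on which strip $\I_\kk^{S_{k-1}}$ the walk currently sits in, so one must work with the conditional expectations $\E[Z_k\mid\mathcal F_{k-1}]$ rather than a constant mean. Fortunately \eqref{eq:ExpecBernouilli} holds strip by strip, giving the required uniform bound on each conditional mean, which is exactly what Azuma needs. A minor additional check is that the event of exit in $\le N$ steps is measurable and coincides with $\{\max_{j\le N}|S_j|\ge M\}$, which is immediate because the walk has unit steps and hence cannot jump over the boundary strips.
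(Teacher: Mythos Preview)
Your proposal is correct and takes a cleaner route than the paper. The paper proceeds by bounding $\Prob(j^*\le R^\de\eps^{-2\kk})$ via a union bound over all $j\le R^\de\eps^{-2\kk}$ of the event $\{|\sum_{k\le j}Z_k|\ge R\eps^{-\kk}\}$; for each fixed $j$ it centers $Y_k=Z_k-\E Z_k$, invokes the Lyapunov central limit theorem (treating the $Y_k$ as independent, non-identically distributed with uniformly bounded third moments) to get a Gaussian tail $\exp(-C'R^2\eps^{-2\kk}/(4j))$, and then absorbs the union-bound prefactor into the exponent. Your argument replaces this by the maximal Azuma--Hoeffding inequality applied to the martingale $M_j=S_j-\sum_{k\le j}\E[Z_k\mid\mathcal F_{k-1}]$, after the same drift removal using \eqref{eq:ExpecBernouilli}. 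This buys two things: first, it is non-asymptotic and yields the exponential tail directly, whereas the CLT is a distributional limit and strictly speaking does not by itself furnish the bound $e^{-C'R^{2-\de}}$ uniformly in $\eps$; second, it correctly handles the fact that the law of $Z_k$ depends on the current strip $S_{k-1}$ (so the $Y_k$ are martingale differences rather than independent variables), a point the paper's appeal to the Lyapunov CLT glosses over. The only cosmetic point to tidy is that $M=\lfloor R\eps^{-\kk}\rfloor$ and $N=\lceil R^\de\eps^{-2\kk}\rceil$ differ from $R\eps^{-\kk}$ and $R^\de\eps^{-2\kk}$ by at most one, which is harmless for the final constant.
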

\begin{proof}
Since the number of strips is $R\eps^{-\kk}$,
\[
 \Prob\left(j^*\leq R^\de\eps^{-2\kk}\right)\leq  \Prob\left(\exists j^*\leq
R^\de\eps^{-2\kk}:\left|\sum_{k=1}^{j^*}Z_j\right|\geq R\eps^{-\kk}\right).
\]
Define $Y_j=Z_j-\E Z_j$, then for $R$ large enough and taking \eqref{eq:ExpecBernouilli} into account
\[
\begin{split}
 \Prob\left(\left|\sum_{k=1}^{j^*}Z_j\right|\geq R\eps^{-\kk}\right)= &\leq  \Prob\left(\left|\sum_{k=1}^{j^*}Y_j+\sum_{k=1}^{j^*}\E Z_j\right|\geq R\eps^{-\kk}\right)\\
 &\leq   \Prob\left(\left|\sum_{k=1}^{j^*}Y_j\right|\geq R\eps^{-\kk}-C j^*\eps^{\kk}\right)\\
 &=   \Prob\left(\left|\frac{1}{\sqrt{j^*}}\sum_{k=1}^{j^*}Y_j\right|\geq
\frac{R\eps^{-\kk}}{\sqrt{j^*}}-C \sqrt{j^*}\eps^{\kk}\right).
\end{split}
\]
Using that $j^*\leq R^\de\eps^{-2\kk}$, taking $R$ big enough,
\[
 \frac{R\eps^{-\kk}}{\sqrt{j^*}}-C \sqrt{j^*}\eps^{\kk}\leq   \frac{R\eps^{-\kk}}{2\sqrt{j^*}}
\]
which implies,
\[
  \Prob\left(\left|\sum_{k=1}^{j^*}Z_j\right|\geq R\eps^{-\kk}\right)\leq 
\Prob\left(\left|\frac{1}{\sqrt{j^*}}\sum_{k=1}^{j^*}Y_j\right|\geq
\frac{R\eps^{-\kk}}{2\sqrt{j^*}}\right).
\]
The variables $Y_j$ are independent but not identically distributed.
Nevertheless,  their third moments have a uniform upper  bound independent of 
$\eps$ and $R$. Then, one can apply Lyapunov center limit theorem to prove that 
\[\frac{1}{\sqrt{j^*}}\sum_{k=1}^{j^*}Y_j\]
tends in distribution  to a normal random variable with
positive variance which has a lower bound independent of $\eps$ and $R$.
Therefore, 
\[
  \Prob\left(\left|\sum_{k=1}^{j^*}Z_j\right|\geq R\eps^{-\kk}\right)\leq  e^{-C' \frac{R^2\eps^{-2\kk}}{4j^*}},
\]
for some $C'>0$ independent of $\eps$ and $R$. This implies that 
\[
  \Prob\left(\exists j^*\leq R^\de\eps^{-2\kk}:\left|\sum_{k=1}^{j^*}Z_j\right|\geq R\eps^{-\kk}\right)\leq  e^{-C'R^{2-\de}}.
\]
reducing slightly $C'$ if necessary.
\end{proof}

Now we bound the second row in \eqref{eq:ExitProbFinal}.
Call $N_j$ the exit time for 
$r_n$ of the
$j$-th visit. The expectation $\E 
N_j$ depends on the visited strip but is independent of  $j$ since the different
visits to the same strip are independent. 
% $k$ since the different visits to $\I^j_\kk$ are independent. 
Moreover, a direct consequence of Lemmas \ref{lemma:expectlemmaBigStrips} and
the analogous lemma for resonant zones given in \cite{CGK} is 
that 
\[
 C^{-1}\eps^{-2(1-\kk)}\leq \E N_j\leq C\eps^{-2(1-\kk)}
\]
for some constant $C>0$  independent of $\eps$ and $R$ (the lengths of the
strips are $R$ independent). 

To bound the first row in \eqref{eq:ExitProbFinal}, we use $\Prob\left(j^*\leq
R^\de\eps^{-2\kk}\right)\leq 1$ and we condition $\Prob\left(X\left|j^*\leq
R^\de\eps^{-2\kk}\right.\right)$ as follows. Fix $\lambda>0$ small independent
of $\eps$ and $R$.
\begin{equation}\label{eq:ProbExitFinal2}
\begin{split}
 \Prob\left(X\left|j^*\leq
R^\de\eps^{-2\kk}\right.\right)=& \Prob\left(X\left|j^*\leq
R^\de\eps^{-2\kk},\left. \left|\frac{1}{j^*}\sum_{j=1}^{j^*}M_j\right|
>\lambda\right.\right.\right)\Prob\left(\left|\frac{1}{j^*}\sum_{j=1}^{j^*}
M_j\right| >\lambda\right)\\
&+\Prob\left(X\left|j^*\leq
R^\de\eps^{-2\kk},\left. \left|\frac{1}{j^*}\sum_{j=1}^{j^*}M_j\right|
\leq\lambda\right.\right.\right)\Prob\left(\left|\frac{1}{j^*}\sum_{j=1}^{j^*}
M_j\right| \leq\lambda\right)
\end{split}
\end{equation}
We start by bounding the first row. Define the variables
\[
 M_j=\frac{N_j-\E N_j}{\E N_j}
\]
It can be easily seen that $\mathrm{Var}(M_j)\leq C$ for some $C>0$ which is
independent of $j$. Since $\E M_j=0$,
\[
 \Prob\left(\left.
\left|\frac{1}{j^*}\sum_{j=1}^{j^*}M_j\right| >\lambda \right|j^*>
R^\de\eps^{-2\kk}\right)\to 0
\]
as $\eps\to 0$, which gives the necessary estimates for the first row in
\eqref{eq:ProbExitFinal2}. Therefore, it only remains to bound the second row
in  \eqref{eq:ProbExitFinal2}. To this end, it is enough to point out that 
\[
\left|\frac{1}{j^*}\sum_{j=1}^{j^*}M_j\right| \leq\lambda
\]
implies
\[
 n^*\geq \sum_{j=1}^{j^*-1}N_j \geq  (1-\lambda)(j^*-1)\min_j \E N_j.
\]
Therefore, $n^*\gtrsim R^\de \eps^{-2}$. Nevertheless, by hypothesis, $n^*\leq
s\eps^{-2}$. Therefore, taking $R$ large enough (depending on $s$), we obtain
\[
 \Prob\left(X\left|j^*\leq
R^\de\eps^{-2\kk},\left. \left|\frac{1}{j^*}\sum_{j=1}^{j^*}M_j\right|
\leq\lambda\right.\right.\right)=0.
\]
This completes the proof of the fact that the second row in
\eqref{def:SplittingExpect} goes to zero as $\eps\to 0$ and $R\to+\infty$.

% in proving that the
% remaining case 
% \[
% \begin{split}
% \Prob\left(\left.A\right|K_j^*> {\eps^{-\kk+2\de}}\,\text{ for all }\lfloor 
% J_-\eps^\kk\rfloor\leq j\leq \lfloor 
% J_+\eps^\kk\rfloor\right)\\
% \times \Prob\left(K_j^*> {\eps^{-\kk+2\de}}\,\text{ for all }\lfloor 
% J_-\eps^\kk\rfloor\leq j\leq \lfloor 
% J_+\eps^\kk\rfloor\right)
%  \end{split}
% \]
% goes to zero, we can assume that 
% \[
% \left| \frac{\sum_{k=1}^{K_j^*}N_j^k}{K_j^*}-\E N_j^k\right| \leq 
% \frac{\E 
% N_j^k}{2}.
% \]
% This implies
% \[
% C^{-1}\eps^{-2(1-\kk)}\sum_jK_j^*\leq \sum_jK_j^* \E N_j^k\leq n^*\leq 
% s\eps^{-2}.
% \]
% Since
%  $\sum_jK_j^*=j^*$, we have that $j^*\leq C\eps^{-2\kk}$. 

Now we prove that the first row in
\eqref{def:SplittingExpect} goes to zero as $\eps\to 0$ for any fixed $R>0$.
Now we proceed as in the proof of Lemma \ref{lemma:expectlemmaBigStrips} and we
model the visits to 
the strips in $[-R,R]$  as a  symmetric  random walk. The number 
of strips is of order $C(R)\eps^{-\kk}$ for some function $C(R)$ independent of
$\eps$.
% Moreover, the number of strips certainly 
% grow if one increases $R\gg1$. Since a random walk has exponentially small
% tales, 
% having 
% $j^*\leq C\eps^{-2\kk}$, implies that 
% $\Prob(A)\to 0$ as $R\to+\infty$ uniformly for small $\eps$. This completes the 
% proof of the convergence to zero of the second row of 
% \eqref{def:SplittingExpect}.
% 

As in the proof of Lemma \ref{lemma:expectlemmaBigStrips}, we modify slightly
the strips
$\I_\kk^j$. Consider endpoints of the strips 
\[
 r_j=A_j \eps^\kk,\quad j\in\ZZ
\]
with some constants $A_j$ independent of $\eps$ satisfying $A_0=0$, $A_1=A>0$ 
and 
$A_j<A_{j+1}$ for $j>0$ (and analogously for negative $j$'s).  We consider the
strips 
\[
 \I_\kk^j=[r_j,r_{j+1}]=[A_j\eps^\kk, A_j\eps^\kk].
\]
To analyze the visits to these strips, we consider the lattice of points 
$\{r_j\}_{j\in\ZZ}\subset\R$ and we treat the  ``visits'' to these
points.  Lemma \ref{lemma:expectlemmaBigStrips} and the analogous lemma for
resonant zones given in \cite{CGK} imply that 
if we start with $r=r_j$ we hit either $r_{j-1}$ or 
$r_{j+1}$  with probability one. We treat this process   as a
random walk 
for $j\in\ZZ$,
\begin{equation}\label{def:randomwalk}
 S_j=\sum_{i=0}^{j-1}Z_i,
\end{equation}
where $Z_i$ are Bernouilli variables taking values $\pm 1$. We choose properly
the constants $A_j>0$ to have $Z_i$ which  are Bernouilli 
variables with $p=1/2$. That is, to have a
classical symmetric random walk. 

\begin{lem}
There exists constants  $J^\pm>0$  and  $\{A_j\}_{j=\lfloor 
J_-\eps^{-\kk}\rfloor}^{\lfloor 
J_+\eps^{-\kk}\rfloor}$ all independent of $\eps$ such that 
\begin{itemize}
 \item Satisfy 
\[
A_j=A_{j-1}+(A_1-A_{0})\exp\left(-\int_0^{r_{j-1}}\frac { 
2b(r)}{\sigma^2(r)}dr\right)+\OO(\eps^{\kk})
\]
\item $\displaystyle [-R,R]\subset\bigcup_{j=\lfloor 
J_-\eps^{-\kk}\rfloor}^{\lfloor 
J_+\eps^{-\kk}\rfloor}[A_j,A_{j+1}]$.
\item The random walk process induced by the map \eqref{eq:NRmap-n} on the 
lattice $\displaystyle\{r_j\}_{j=\lfloor 
J_-\eps^{-\kk}\rfloor}^{\lfloor 
J_+\eps^{-\kk}\rfloor}$ is a symmetric random walk.
\end{itemize}
\end{lem}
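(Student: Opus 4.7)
The proof mirrors that of Lemma \ref{lemma:randomwalkinter}, with the scale $\eps^\ga$ replaced by $\eps^\kk$ and the local expectation estimate in a strip $I_\ga^j$ replaced by the expectation estimate in a strip $\I_\kk^j$ provided by Lemma \ref{lemma:expectlemmaBigStrips} (in the non--resonant zones) and by its counterpart in \cite{CGK} (in the resonant zones). The plan is to fix the endpoints $r_j=A_j\eps^\kk$ inductively in $j$, starting from $A_0=0$ and an arbitrary $A_1=A>0$, and to choose $A_{j+1}$ so that the random walk from $r_j$ hits $r_{j-1}$ or $r_{j+1}$ with probability exactly $1/2$.

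To compute these hitting probabilities, I would apply the local expectation estimate (whether from Lemma \ref{lemma:expectlemmaBigStrips} or its resonant analogue) to a test function $f$ lying in the kernel of the infinitesimal generator of the limiting diffusion, namely a solution of
\begin{equation*}
b(r)f'(r)+\frac12\sigma^2(r)f''(r)=0,\qquad f(r_{j-1})=0,\quad f(r_{j+1})=1.
\end{equation*}
The unique solution is
\begin{equation*}
f(r)=\frac{\int_{r_{j-1}}^{r}m(\rr)\,d\rr}{\int_{r_{j-1}}^{r_{j+1}}m(\rr)\,d\rr},\qquad m(\rr):=\exp\left(-\int_0^{\rr}\frac{2b(s)}{\sigma^2(s)}\,ds\right),
\end{equation*}
and, by the local expectation lemma, $f(r_j)$ equals the probability of hitting $r_{j+1}$ before $r_{j-1}$ starting from $r_j$, up to an $\OO(\eps^\zeta)$ error. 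Imposing $f(r_j)=1/2$ and applying the mean value theorem yields, with $D_j:=A_j-A_{j-1}$ and intermediate points $\xi_j\in[A_{j-1},A_j]$,
\begin{equation*}
D_{j+1}=\frac{m(\xi_j)}{m(\xi_{j+1})}D_j+\OO(\eps^\kk),
\end{equation*}
which telescopes to the claimed formula for $A_j$ up to an $\OO(\eps^\kk)$ error per step.

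The remaining points are qualitative. Since $b$ and $\sigma^2$ are bounded (with $\sigma^2$ bounded below by Hypothesis \textbf{[H1]} and the discussion after Lemma \ref{lemma:IR:exittime}) on the compact interval $[-R,R]$, the weights $m(\rr)$ are bounded above and below by positive constants depending only on $R$. Hence each $D_j$ lies in an interval $[c_1,c_2]$ with $c_1,c_2>0$ independent of $\eps$, and a fixed number $J_+\sim R/c_1$ of strips suffices to reach $r=R$; the construction for $j<0$ is identical, producing $J_-$. Consequently the lattice $\{r_j\}$ with $j\in[\lfloor J_-\eps^{-\kk}\rfloor,\lfloor J_+\eps^{-\kk}\rfloor]$ covers $[-R,R]$ by strips of length $\OO(\eps^\kk)$, and on this lattice the induced walk is symmetric by construction. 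The main (mild) obstacle is that a single strip $\I_\kk^j$ may intersect both the non-resonant zone $\mathcal D_\beta$ and a resonant zone $\mathcal R_\beta^{p/q}$; this is handled by splitting such a strip at the boundary into a non-resonant and a resonant sub-strip, applying Lemma \ref{lemma:expectlemmaBigStrips} on the former and its \cite{CGK} analogue on the latter, and absorbing the additional boundary corrections into the $\OO(\eps^\kk)$ error.
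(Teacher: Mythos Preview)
Your proposal is correct and follows the same route as the paper: the paper simply states that the proof is analogous to that of Lemma~\ref{lemma:randomwalkinter}, and your reconstruction matches that argument (solve $Af=0$ with boundary values $0$ and $1$, identify $f(r_j)$ with the hitting probability via the local expectation lemma, impose $f(r_j)=1/2$, and use the mean value theorem to get the recursion for $D_j$). Two minor remarks: the error from Lemma~\ref{lemma:expectlemmaBigStrips} is $\OO(\eps^{2\kk+\zeta})$, not $\OO(\eps^\zeta)$, but this is harmlessly absorbed into the $\OO(\eps^\kk)$ in the recursion; and your last paragraph about strips straddling the resonant/non-resonant boundary is a nice bit of care that the paper does not spell out.
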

The proof of this lemma is analogous to the proof of Lemma
\ref{lemma:randomwalkinter}.

Now we prove the convergence to zero of the first row in 
\eqref{def:SplittingExpect}. In that case we stay in $[-R,R]$ for all time 
$n\leq s\eps^{-2}$ and we can model the whole evolution as a symmetric  random
walk. Define $j^*$ the number of changes of strip until 
reaching $n=\lfloor s\eps^{-2}\rfloor$. We define the 
Markov times  $0=n_0<n_1<n_2 <\dots <n_{j^*-1}<n_{j^*}<n$
for some random $j^*=j^*(\om)$ such that each $n_j$ is the stopping 
time as in \eqref{eq:stopping-time}. Almost surely $j^*(\om)$ is 
finite.  We decompose the above sum as $\eta_f=\sum_{j=0}^{j^*} \eta_j$ with  
\begin{eqnarray*}
\eta_j=&&f(r_{n_{j+1}})-
f(r_{n_{j}})- \qquad \qquad \\
&&  \eps^2
\sum_{s=n_j}^{n_{j+1}}\left(b(r_s)f'(r_s)+\frac{\sigma^2(r_s)}{2}
f''(r_s)\right).
\end{eqnarray*}
Lemma \ref{lemma:expectlemmaBigStrips} and the analogous lemma for resonant
zones in \cite{CGK} imply that for 
any $j$, 
\begin{equation}\label{def:localExpec}
|\E(\eta_j)|\lesssim\eps^{2\kk+\zeta}
\end{equation}
for some $\zeta>0$. Define $\Delta_j=n_{j+1}-n_j$ and . We split 
$\E(\eta_f)$ as 
\begin{equation}\label{def:SplitGlobalExpec}
 \begin{split}
\E(\eta_f)= & \E\left(\left.\sum_{j=0}^{j^*}\eta_j\right| 
\eps^{-2(1-\kk)+\de}\leq \Delta_j\leq 
\eps^{-2(1-\kk)-\de}\,\,\forall j\right)\\
&\qquad\times\Prob\left(\eps^{-2(1-\kk)+\de}\leq 
\Delta_j\leq 
\eps^{-2(1-\kk)-\de}\,\,\forall j\right)\\
&+\E\left(\left.\sum_{j=0}^{j^*}\eta_j\right| \,\,\exists j \,\text{ s.
t. }
\Delta_j<\eps^{-2(1-\kk)+\delta} \text{ or } 
\Delta_j>\eps^{-2(1-\kk)-\delta}\right)\\
&\qquad\times \Prob\left(\exists 
j \,\text{ s. t. }
\Delta_j<\eps^{-2(1-\kk)+\delta} \text{ or } 
\Delta_j>\eps^{-2(1-\kk)-\delta}\right)
\end{split}
\end{equation}
where $j$ satisfies $0\leq j\leq j^*-1$.

We first bound the second term in the sum. We need 
to estimate how many strips the iterates may 
visit for $n\le s\eps^{-2}$. Proceeding as in the proof of Lemma
\ref{lemma:expectlemmaBigStrips}, since we have $|r_n-r_{n-1}|\lesssim\eps$,
there exists a 
constant $c>0$ such that 
\[
 |n_{j+1}-n_j|\geq c\eps^{\kk-1}\quad\text{ for }j=0,\ldots, j^*-1.
\]
Therefore
\begin{equation}\label{def:maxvisitedstrips:final}
j^*\lesssim \eps^{1-\kk}.
\end{equation}
Then, by  Lemmas \ref{lemma:expectlemmaBigStrips}
and the corresponding lemma for resonant zones in \cite{CGK}, for any small
$\de$,
\[
 \Prob\left(\exists 
k \,\text{ s. t. }
\Delta_j<\eps^{-2(1-\kk)+\delta} \text{ or } 
\Delta_j>\eps^{-2(1-\kk)-\delta}\right)\leq 
\eps^{-1-\kk}e^{-C\eps^{-\de}}.
\]
This implies, 
\[
\begin{split}
\Bigg| \E\Bigg(\sum_{j=0}^{j^*}\eta_j\Bigg| &\,\,\exists j \,\text{ s.
t. }
\Delta_j<\eps^{-2(1-\kk)+\delta} \text{ or } 
\Delta_j>\eps^{-2(1-\kk)-\delta}\Bigg)\Bigg|\\
&\times \Prob\left(\exists 
j \,\text{ s. t. }
\Delta_j<\eps^{-2(1-\kk)+\delta} \text{ or } 
\Delta_j>\eps^{-2(1-\kk)-\delta}\right)\\
&\leq 
\eps^{-1-\kk}\cdot \eps^{2\kk+d}\cdot\eps^{-1-\kk}e^{
-C\eps^{-\de}}.
\end{split}
\]
Now we bound the first term in \eqref{def:SplitGlobalExpec}. Taking into 
account the assumptions on the exit times $\Delta_j$, we can  assume 
\begin{equation}\label{eq:j*:final}
 \eps^{-2\kk+\de}\leq j^*\leq \eps^{-2\kk-\de}.
\end{equation}
Now we are ready to prove that the first term in \eqref{def:SplitGlobalExpec} 
tends to zero with $\eps$. We bound the probability by one. To prove
that 
the conditioned expectation in the first line tends to zero with $\eps$,
% using 
% the formula for $\eta$ in \eqref{def:ExpNF}. 
% We split the sum in \eqref{def:ExpNF} by restricting to summands with
% iterates  belonging to one of the strip and bound the total expectation by 
% bounding each  local one (and the number of strips visited). As we have 
% explained in Section \ref{sec:Martingale}, we define the 
% Markov times  $0=n_0<n_1<n_2 <\dots <n_{m-1}<n_m<n$
% for some random $m=m(\om)$ such that each $n_k$ is the stopping 
% time as in \eqref{eq:stopping-time}. Almost surely $m(\om)$ is 
% finite.  We decompose the above sum as $\eta=\sum_{k=0}^m \eta_k$ with  
% \begin{eqnarray*}
% \eta_k=&&f(r_{n_{k+1}})-
% f(r_{n_{k}})- \qquad \qquad \\
% &&  \eps^2
% \sum_{s=n_k}^{n_{k+1}}\left(b(r_s)f'(r_s)+\frac{\sigma^2(r_s)}{2}
% f''(r_s)\right).
% \end{eqnarray*}
% Theorems \ref{lemmaexpectation} and \ref{lemmaexpectation-IR} imply that for 
% any $k$, 
% \begin{equation}\label{def:localExpec}
% \begin{split}
% |\E(\eta_k)|&\lesssim\eps^{2\ga+d}\qquad\text{ for totally irrational 
% strips}\\
% |\E(\eta_k)|&\lesssim\eps^{2\ga}\qquad\quad\text{for imaginary rational 
% strips},
% \end{split}
% \end{equation}
% for some $d>0$. Then, using the number of visits to each type of strips given 
it is enough to take into account \eqref{def:localExpec} and 
\eqref{eq:j*:final}, to obtain 
\[
\left|\E\left(\left.\sum_{j=0}^{j^*}\eta_j\right| 
\eps^{-2(1-\kk)+\de}\leq \Delta_j\leq 
\eps^{-2(1-\kk)-\de}\,\,\forall j\right)\right|
\lesssim
\eps^{2\kk+\zeta}\cdot\eps^{-2\kk-\de}
\leq \eps^{\zeta-\de}.
\]
Therefore, taking $\de>0$ small enough we have that the first row in  
\eqref{def:SplittingExpect} tends to zero with $\eps$. This completes the proof 
of  \eqref{def:ExpNF} and therefore of Theorem \ref{maintheorem}.

\appendix
%{Auxiliary facts about diffusion processes}

\section{Measure of the domain covered by IR intervals}\label{sec:measure-IR-RR}
A point belongs to a Imaginary Rational strip if it is $\eps^\nu$-close to a
rational number $p/q$ with $|q|\leq \eps^{-b}$ (see \eqref{conditionbeta}).
In this section we show that, with the right choice of $b$, the measure of the
the union of all Imaginary Rational strips  inside any compact set, 
\[
A_{\nu,\ga}=\cup_k I_\ga^k\subset\mathbb{T}\times B\qquad I_\ga^k\ \textrm{
totally irrational}
\]
goes to zero as $\eps\to 0$.  

%Thus, all the important behavior will take place in intervals of TI type.

We do the proof for $A=[0,1]$. The general case is completely analogous. 
Let us consider:
\begin{equation}\nonumber
\mathcal{R}=\left\{p/q\in\mathbb{Q}\,:\,p<q,\,\gcd(p,q)=1,\,q<\eps^{-b}\right\}
=\cup_ { q=1 } ^{q_\textrm{max}}\mathcal{R}_q \subset[0,1],
\end{equation}
where $q_{\textrm{max}}=[\eps^{-b}]$ and:
\begin{equation}\nonumber
 \mathcal{R}_q=\{p/q\in\mathbb{Q}\,:\,p<q,\,\gcd(p,q)=1\}.
\end{equation}
Finally we denote:
\begin{equation}\nonumber
 I_\mathcal{R}= \bigcup_{p/q\in\mathcal{R} }
\left[\frac{p}{q}-2\eps^\nu,\frac{p}{q}+2\eps^\nu \right]\bigcap [0,1].
%  \{I_\beta^k\subset[0,1]\,:\, \exists p/q\in\mathcal{R}\cap I_\beta\}.
\end{equation}

\begin{lem}\label{lem:im-rational}
Let $\rho$ be fixed, $0<\rho<\nu$, and define $b=(\nu-\rho)/2$. 
Then, 
\begin{enumerate}
 \item In each  $I_\ga$  there is at most one rational 
$p/q$ in its $\eps^\nu$ neighborhood satisfying $|q|\leq\eps^{-b}$.
\item The Lebesgue measure $\mu$ of the union $I_{\mathcal
R}$ satisfies  $\mu(I_\mathcal{R}) \le \eps^\rho$
and, therefore,  as $\eps \to 0$,
\[
\mu(I_\mathcal{R})\to 0.
\]
\end{enumerate}
\end{lem}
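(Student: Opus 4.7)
}
The plan is to exploit the classical fact that two distinct reduced fractions with bounded denominators are quantitatively separated, and then do a straightforward counting argument.

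For part (1), I would argue by contradiction. Suppose $p_1/q_1$ and $p_2/q_2$ are two distinct rationals in reduced form, both lying in the $\eps^\nu$-neighborhood of the same strip $I_\ga$ (of length $\eps^\ga$), with $|q_i|\le \eps^{-b}$ for $i=1,2$. Then
\[
\left|\frac{p_1}{q_1}-\frac{p_2}{q_2}\right|
=\frac{|p_1q_2-p_2q_1|}{|q_1q_2|}\ge \frac{1}{|q_1q_2|}\ge \eps^{2b}.
\]
On the other hand, since both rationals lie in the $\eps^\nu$-neighborhood of an interval of length $\eps^\ga$, their distance is at most $\eps^\ga+2\eps^\nu$. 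Using $\ga>\nu$ (recall $\ga\in(4/5,4/5+1/40)$ and $\nu=1/4$), this upper bound is bounded by $3\eps^\nu$ for small $\eps$. Combining, one would need $\eps^{2b}\le 3\eps^\nu$, i.e.\ $\eps^{-\rho}=\eps^{2b-\nu}\le 3$, which fails for $\eps$ small since $\rho>0$. This gives the uniqueness claim.

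For part (2), I would estimate the total measure by counting rationals. Every rational in $\mathcal R_q\cap[0,1]$ contributes an interval of length at most $4\eps^\nu$ to $I_{\mathcal R}$. The number of reduced fractions with denominator exactly $q$ in $[0,1]$ is $\varphi(q)\le q$, so
\[
\#\mathcal R\ \le\ \sum_{q=1}^{q_{\max}} q\ \le\ q_{\max}^2\ \le\ \eps^{-2b}.
\]
Hence
\[
\mu(I_{\mathcal R})\ \le\ 4\eps^\nu\cdot\eps^{-2b}\ =\ 4\eps^{\nu-2b}\ =\ 4\eps^\rho,
\]
using $b=(\nu-\rho)/2$. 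Absorbing the constant (or replacing the initial choice of the prefactor in the definition of the strips by a slightly smaller one), one obtains $\mu(I_{\mathcal R})\le \eps^\rho$, which tends to $0$ since $\rho>0$.

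Neither step presents a real obstacle; the only subtlety is keeping track of the bookkeeping between $\nu$, $b$, $\rho$ and $\ga$, and checking that $\ga>\nu$ is used only through the harmless inequality $\eps^\ga\le\eps^\nu$. The whole argument is elementary and uses no probabilistic input beyond the definitions in Section \ref{sec:different-strips}.
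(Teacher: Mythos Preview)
Your proof is correct and follows essentially the same route as the paper: the separation bound $|p_1/q_1-p_2/q_2|\ge 1/|q_1q_2|\ge \eps^{2b}$ for part (1), and the crude count $\#\mathcal R\lesssim q_{\max}^2\le \eps^{-2b}$ multiplied by the length $\sim\eps^\nu$ of each excluded interval for part (2). If anything, your treatment of part (1) is slightly more careful than the paper's, since you explicitly account for the width $\eps^\ga$ of the strip (using $\ga>\nu$), whereas the paper's argument bounds only the $\eps^\nu$-ball around a single rational; your honest remark about the harmless constant $4$ in part (2) is likewise more transparent than the paper's write-up.
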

\begin{proof}
On the one hand, suppose that $p/q\in [0,1]$, $q\leq\eps^{-b}$. Then, for all
$p'/q'\in [p/q-\eps^\nu,p/q+\eps^\nu]$, with $p'$ and $q'$ relatively prime and
$p'/q'\neq p/q$, we have
$$\eps^\nu\geq|p/q-p'/q'|\geq\frac{1}{qq'}\geq\frac{\eps^b}{q'}.$$
Therefore, since $b=(\nu-\rho)/2$, 
\[q'\geq\eps^{-\nu+b}=\eps^{-b-\rho}>\eps^{-b},\]
so the first part of the claim is proved.

On the other hand we note that, if $q_1\neq q_2$, then $\mathcal{R}_{q_1}\cap\mathcal{R}_{q_2}=\emptyset$. Moreover, it is clear that $\#\mathcal{R}_q\leq q-1$ (and if $q$ is prime then $\#\mathcal{R}_q=q-1$, so that the bound is optimal). Therefore we have:
$$\#\mathcal{R}\leq\sum_{q=1}^{q_\textrm{max}}\#\mathcal{R}_q\leq\sum_{q=1}^{q_\textrm{max}}q-1=\frac{q_\textrm{max}^2}{2}<\eps^{-2b}.$$
Since $\mu([p/q-\eps^\nu,p/q+\eps^\nu])=\eps^{\nu}$, one has
$$0\leq\mu(I_\mathcal{R})=\eps^{\nu}\#\mathcal{R}<\eps^\nu\eps^{-2b}=\eps^{
\rho },$$ 
which proves the  second claim of the lemma.
\end{proof}

\section{An auxiliary lemma}
\label{sec:auxiliaries}

To estimate the exit time, we need the following 
auxiliary lemma. Consider the random sum
\begin{align}\label{random sum}
S_n=\sum_{k=1}^n v_k\om_k,\ \ n\ge 1,
\end{align}
where $\{\om_k\}_{k\ge 1}$ is a sequence of 
independent random variables with equal $\pm 1$ with 
equal probability $1/2$ each and $\{v_k\}_{k\ge 1}$ is 
a sequence such that 
\[
\lim_{n\to \infty} \frac{\sum_{k=1}^n v_k^2}{n}=\sigma.
\] 
%Here is a standard 

\begin{lem}\label{main lemma}
$\{S_n/n^{1/2}\}_{n\ge 1}$ converges in distribution to 
the normal distribution $\mathcal N(0,\sigma^2)$.
\end{lem}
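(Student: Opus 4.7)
}
The plan is to treat $\{v_k\omega_k\}_{k\ge 1}$ as a sequence of independent, mean-zero random variables and apply a classical central limit theorem for triangular arrays. Because the $\omega_k$ are i.i.d.\ Rademacher and the $v_k$ are deterministic (in the applications of this lemma, the $v_k = v(\theta_0+kr^*,r^*)$ are uniformly bounded, a fact I will use), the variables $v_k\omega_k$ are independent with $\mathbb{E}[v_k\omega_k]=0$ and $\mathrm{Var}(v_k\omega_k)=v_k^2$. The first step is thus to consider the triangular array $X_{n,k}=v_k\omega_k/\sqrt{n}$, $1\le k\le n$, and note that by hypothesis
\[
\sum_{k=1}^n \mathrm{Var}(X_{n,k})=\frac{1}{n}\sum_{k=1}^n v_k^2 \longrightarrow \sigma^2
\]
as $n\to\infty$.

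The cleanest way I would carry this out is via characteristic functions, which avoids appealing to a named theorem and exploits the special Rademacher structure. Writing $\phi_n(t)=\mathbb{E}\bigl[\exp(itS_n/\sqrt{n})\bigr]$, independence of the $\omega_k$ gives
\[
\phi_n(t)=\prod_{k=1}^n \mathbb{E}\bigl[e^{itv_k\omega_k/\sqrt{n}}\bigr]=\prod_{k=1}^n \cos\!\left(\frac{t v_k}{\sqrt{n}}\right).
\]
Using $|v_k|\le M$ uniformly and the expansion $\log\cos(x)=-x^2/2+O(x^4)$ valid for $|x|\le 1$, one obtains, for $n$ large enough that $|t|M/\sqrt{n}\le 1$,
\[
\log\phi_n(t)=-\frac{t^2}{2n}\sum_{k=1}^n v_k^2 + O\!\left(\frac{t^4}{n^2}\sum_{k=1}^n v_k^4\right).
\]
The first term tends to $-t^2\sigma^2/2$ by the hypothesis on $\sum v_k^2/n$, while the error term is $O(t^4 M^2/n)\to 0$. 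Hence $\phi_n(t)\to \exp(-t^2\sigma^2/2)$ pointwise, which by L\'evy's continuity theorem yields the stated weak convergence.

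Alternatively, and this is what I would fall back on if one wants to remove the boundedness assumption and keep only $\sup_k |v_k|<\infty$, one verifies the Lindeberg condition directly: for any $\eta>0$,
\[
\sum_{k=1}^n \mathbb{E}\bigl[X_{n,k}^2\,\mathbf{1}_{\{|X_{n,k}|>\eta\}}\bigr]
=\frac{1}{n}\sum_{k=1}^n v_k^2\,\mathbf{1}_{\{|v_k|>\eta\sqrt{n}\}},
\]
and for $n$ sufficiently large the indicator is zero since $|v_k|\le M$, so the sum vanishes identically. Combined with $\sum_k \mathrm{Var}(X_{n,k})\to\sigma^2$, the Lindeberg--Feller theorem gives the conclusion. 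No step here is really an obstacle; the only subtlety worth flagging is the implicit uniform bound on $v_k$, which holds throughout the applications of the lemma in Sections \ref{sec:TI-case} and \ref{sec:IR-case} because $v$ is a $\mathcal{C}^l$ function on a compact set.
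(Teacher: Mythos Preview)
Your proposal is correct and follows essentially the same route as the paper: compute the characteristic function of $S_n/\sqrt{n}$ as a product of cosines, show its logarithm converges to $-\sigma^2 t^2/2$, and invoke L\'evy's continuity theorem. You are in fact more careful than the paper's one-line sketch, explicitly carrying out the expansion and correctly flagging the implicit uniform bound on the $v_k$ (which the statement omits but which holds in every application and is needed for the argument).
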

\begin{proof}

Recall that a characteristic function of a random 
variable $X$ is a function $\phi_X:\R \to \mathbb C$ given by 
$\phi_X(t)=\E \exp (itX)$. Notice that it satisfies the 
following two properties:
\begin{itemize}
\item If $X,Y$ are independent random variables, 
then $\varphi_{X+Y}=\varphi_X\cdot\varphi_Y$.
\item $\varphi_{aX}(t)=\varphi_X(at)$.
\end{itemize}

A sufficient condition to prove convergence in distribution
is as follows.  
\begin{thm}[Continuity theorem \cite{Br}]
\label{continuity theorem}
Let $\{X_n\}_{n\ge 1},Y$ be random variables. 
If $\{\varphi_{X_n}(t)\}_{n\ge 1}$ converges to $\varphi_Y(t)$
for every $t\in\mathbb R$, then $\{X_n\}_{n\ge 1}$ converges 
in distribution to $Y$.
\end{thm}

A direct calculation shows that 
% $$
% \lim_{n\to \infty} \log \phi_{S_n/\sqrt n}(t)=-\dfrac{t^2}{2\sigma^2}
% \qquad \text{ for all }\ \  t\in \R.   
% $$ 
% \textg{It should be 
\[
\lim_{n\to \infty} \log \phi_{S_n/\sqrt n}(t)=-\dfrac{\sigma^2t^2}{2}
\qquad \text{ for all }\ \  t\in \R.   
\] 
% }
This way of proof was communicated to the authors by 
Yuri Lima. 
\end{proof}

{\bf Acknowledgement:} The authors warmly thank Leonid Koralov 
for numerious envigorating discussions of various topics involving 
stochatic processes. Communications with Dmitry Dolgopyat, 
Yuri Bakhtin, Jinxin Xue were useful for the project 
and gladly acknowledged by the authors. The first and second authors have 
been partially supported by the  Spanish
MINECO-FEDER Grant MTM2015-65715 and the Catalan Grant 2014SGR504. The third 
author  acknowledges partial support of the NSF grant DMS-1402164.

\end{document}